\numberwithin{equation}{section}
\newcommand{\im}{\mathrm{Im}}
\newcommand{\re}{\mathrm{Re}}
\newcommand{\R}{{\mathbb R}}
\newcommand{\HH}{ \mathbb{H} }
\newcommand{\ch}{\operatorname{ch}}
\newcommand{\sh}{\operatorname{sh}}
\def \calR   {\mathcal{R}}
\newcommand{\tha}{\operatorname{th}}
\numberwithin{equation}{section}
\newtheorem{theorem}{Theorem}[section]
\newtheorem{lemma}[theorem]{Lemma}
\newtheorem{remark}[theorem]{Remark}
\newtheorem{prop}[theorem]{Proposition}
\newtheorem{coro}[theorem]{Corollary}
\newtheorem{claim}[theorem]{Claim}
\begin{document}
\title[Stability of vortices in Manton's model on $\mathbb{H}^2$]{Equivariant stability of vortices in Manton's Chern-Simons-Schr\"odinger system on the hyperbolic plane}

 \begin{abstract}
In this work we study magnetic vortices on the hyperbolic plane for a Chern-Simons-Schr\"odinger system introduced by Manton. The model can be thought of as the Schr\"odinger analogue of the Abalian-Higgs model. It consists of a system of partial differential equations, where the complex Higgs field $\Phi$ evolves according to a nonlinear Schr\"odinger equation coupled to an electromagnetic field $A$. We restrict attention to the self-dual (Bogomolny) case under equivariance symmetry. For each $m\geq 1$ we prove the asymptotic stability of the equivariant vortex of degree $m$. The main novelties are unraveling the favorable structure of the equations after a nonlinear Darboux transform, and the analysis of the elliptic operator relating the original and the transformed variables.

 \end{abstract}

 \author[O. Landoulsi]{Oussama Landoulsi }
\address{Department of Mathematics \\ University of Massachusetts \\ Amherst, MA 01003, USA}
\email{olandoulsi@umass.edu}
 
 \author[S. Shahshahani]{Sohrab Shahshahani}
\address{Department of Mathematics \\ University of Massachusetts \\ Amherst, MA 01003, USA}
\email{sshahshahani@umass.edu}

\thanks{The second author was partially supported by the Simons Foundation grant 639284. }

 \maketitle 
 \tableofcontents

\section{Introduction}
In this paper we study a gauged Schr\"odinger equation, sometimes known as Manton's model, on $\mathbb{R}\times\mathbb{H}^2$, where $\mathbb{H}^2$ is the hyperbolic plane. Manton's model, which may be viewed as the Schr\"odinger analogue of the Abelian Higgs model, was introduced by Manton in \cite{manton1997} on $\mathbb{R}\times \mathbb{R}^2$. The equations are derived from a Lagrangian which is a Galilean invariant version the Ginzburg-Landau model with kinetic terms that are linear in the first time derivative of the field. See \cite[equation (2.1)]{manton1997}. The corresponding model has also been studied on non-flat Riemann surfaces. See for instance \cite{demouliniStuart09,mantonSutcliffe04,demoulini2007} and references therein.  The time independent version of the equation is the well-known Ginzburg-Landau system. The solutions of the latter, known as vortices, are time-independent, or solitonic, solutions of Manton's model. Here we are interested in the question of stability of Ginzburg-Landau vortices under Manton's Schr\"odinger flow. Specifically, we prove that for each $k\geq1$, the unique equivariant vortex of degree $k$ is asymptotically stable under equivariant perturbations. See Theorem~\ref{main-theo} for the precise version of this statement. \\

The unknowns in Manton's model are a complex scalar field $\Phi$ and a one form $A$, both defined on $\mathbb{R}\times \mathbb{H}^2$. Invariantly, $\Phi$ can be viewed as a section of a trivial line bundle and $A$ as a connection form on the bundle. To describe the equations we will use $(x^1,x^2)$ to denote an arbitrary coordinate system on $\mathbb{H}^2$, and $(t,x^1,x^2)$ to denote the corresponding coordinate system on $\mathbb{R}\times \mathbb{H}^2$. Roman indices will vary over $\{1,2\}$ and Greek indices will vary over 
$\{0,1,2\}$ with $x^0=t$. We use $g$ for the hyperbolic metric on $\mathbb{H}^2$ and use it to raise and lower spatial indices. Associated with $A$ is the curvature two form $F=dA$ and the covariant derivative $D=\nabla-iA$. Here $\nabla$ denotes the Levi-Civita connection on $\mathbb{R}\times \mathbb{H}^2$. The curvature form is decomposed into a magnetic part $B$ and an electric part $E=E_i\,d x^0\wedge d x^i$ as
\begin{equation*}
    F=E_i\,d x^0\wedge d x^i+ B \, dvol,
\end{equation*}
where $dvol$ denotes the volume form on $\mathbb{H}^2$. Here, as in everywhere else in this paper, we have used the convention of summing over repeated indices. With $J$ denoting the complex structure on $\mathbb{H}^2$, Manton's model is given by the equations
\begin{equation}\label{eq:Mantonintro1}
    \begin{cases}
        &iD_t\Phi+\frac{1}{2}D^jD_j\Phi=-\frac{\lambda}{4}(1-|\Phi|^2)\Phi\\
        &2E_j+\partial_jB=-J_j^k\mathrm{Im}(\Phi \overline{D_j\Phi})\\
        &B=\frac{1}{2}(1-|\Phi|^2)
    \end{cases}.
\end{equation}
Here $\lambda$ is a fixed coupling constant. The case $\lambda=1$ is known as the self-dual case and has received considerable attention in both the physics and mathematics literature. See for instance \cite{mantonSutcliffe04, manton1997, jaffeeTaubes80}. Physically, it corresponds to the scenario where there are no forces between static vortices. Mathematically, the static equations in the case $\lambda=1$ have a Bogomolny structure that leads to a classification of solutions. We will henceforth restrict attention to the self-dual case $\lambda=1$ and discuss the structure of the resulting first order Bogomolny equations below.  

 The study of Ginzburg-Landau vortices and the associated time-dependent flows, including Manton's model, on non-flat geometries has a long history. See for instance \cite{mantonSutcliffe04, demoulini2007, demouliniStuart09, stuart94, stuart99, bradlow90, garcia94,  strachan92, samols92, witten1983} and references therein. The case of the hyperbolic geometry is particularly interesting for two reasons. On the one hand there has been growing interest in the study of dispersive equations on hyperbolic spaces in recent years. We refer the reader for instance to the works \cite{AnkerJeanPierfelice09, banica07,banicaCarlesStaffilani08,BaDu07, IS2013, banicaCarlesDuyckaerts08,banicaDuyckaerts14, IPS, LOS1, BorMar1, MT11, MTay12, LLOS2, MWYZ1, wilsonYu25, ZeLi1, ZeLi2} and their references. From this point of view, our work belongs in the realm of soliton stability problems on curved domains. On the other hand, the static Ginzburg-Landau equations on the hyperbolic plane of curvature $-\frac{1}{2}$ arise naturally as a reduction of Yang-Mills instantons under $SO(3)$ cylindrical symmetry. This observation was used by Witten in \cite{witten77} to find the first multi-instanton solutions. Remarkably, in the case of curvature $-\frac{1}{2}$ and self-dual coupling, solutions of the Boglomony equations can be solved to obtain explicit formulas for the vortices. See \cite{mantonSutcliffe04}. For this reason the choice of hyperbolic plane with curvature $-\frac{1}{2}$ is very natural for the study of \eqref{eq:Mantonintro1}. Nevertheless, for notational convenience, we have chosen to work with the more common choice of curvature $-1$ for $\mathbb{H}^2$. It should be noted, however, that our analysis goes through with only minimal notational changes for curvature $-\frac{1}{2}$. In fact, as we already mentioned, for the latter the soliton is explicit, making some computations slightly easier, while for our choice of curvature we are not aware of an explicit formula for the soliton.

 At the surface Manton's model \eqref{eq:Mantonintro1} is similar to the classical Schr\"odinger-Chern-Simons (CSS) equation. The difference is that in the (CSS) the nonlinearity on the right-hand side of the first equation in \eqref{eq:Mantonintro1} is $|\Phi|^2\Phi$. As we will discuss below, in Manton's model finite energy solutions satisfy $|\Phi|\to1$ at spacial infinity, while finite energy solutions of (CSS) decay. Consequently, solutions of Manton's model and (CSS) exhibit different behaviors. For instance, in the case of (CSS) on Euclidean space blowing up solutions have been constructed in \cite{kimKwonwOh20,kim2Kwon23,KimKwon23,kimKwonOh24}. Nevertheless, in view of the affinities of the equations, and the recent attention (CSS) has received, we briefly mention several works on (CSS) and refer the reader to them for further discussion of the related history. Local existence has been studied in \cite{BdBS1,Huh13,LZM18,liuSmithTataru14,liuSmith16}. Results on the long-time behavior without symmetry assumptions include \cite{BdBS1,BdBS2,PusateriOh15}. Under equivariance symmetry much more is known, including finite-time blow up constructions \cite{kimKwonOh24,kim2Kwon23,kimKwonwOh20,KimKwon23}, a subthreshold scattering theorem \cite{liuSmith16}, classification of threshold solutions \cite{LiLiu22}, and soliton resolution \cite{kimKwonOh22}. See also \cite{czubakMillerRoudenko24,ChaeChoe02} for results on some related equations. Finally we mention that Manton's model is a special case of the Zhang-Hansson-Kivelson or ZHK system \cite{zhangHanssonKivelson}. See \cite{mantonSutcliffe04,Hovrathy-Zhang09, demouliniStuart09,zhangHanssonKivelson} for further discussion on the the relation between these models.
\subsection{Structure of the equations in the self-dual case}
We now return to \eqref{eq:Mantonintro1} and describe in more details the special features of the equation in the self-dual case $\lambda=1$. To make matters more explicit we introduce polar coordinates $(r,\theta)$ on $\mathbb{H}^2$ in which the metric $g$ and the volume form $dvol$ take the forms
\begin{equation}
    g= dr\otimes dr +\sh^2r \,d\theta\otimes d\theta,\qquad dvol = \sh r \,dr \wedge d\theta.
\end{equation}
The magnetic field $B$ and the electric field $E$ are given by
\begin{equation}
    B=\frac{1}{\sh r}(\partial_rA_\theta-\partial_\theta A_r),\qquad E_r=\partial_0A_r-\partial_r A_0,\qquad E_\theta=\partial_0A_\theta-\partial_\theta A_0.
\end{equation}
The Laplacian on $\mathbb{H}^2$ is
\begin{equation}
    \Delta= \partial_r^2+\coth r \partial_r+\frac{1}{\sh^2r}\partial_\theta^2.
\end{equation}
Note that for any smooth function $\chi$, equation \eqref{eq:Mantonintro1} is invariant under the gauge transformation
\begin{equation}
    \Phi\mapsto \Phi e^{i\chi}, \qquad A\mapsto A+d\chi.
\end{equation}
As is standard with related models, to remove this ambiguity we will later fix a gauge. Starting with the static equations 
\begin{equation}
    D^jD_j\Phi=-\frac{1}{4}(1-|\Phi|^2)\Phi,\qquad B= \frac{1}{2}(1-|\Phi|^2),
\end{equation}
note that they are the Euler-Lagrange equations of the Ginzburg-Landau action
\begin{equation}
    V(\Phi,A)=\frac{1}{2}\int_{\mathbb{H}^2}\big(B^2+|D\Phi|^2+\frac{1}{4}(1-|\Phi|^2)^2\big)dvol.
\end{equation}
Solutions for which $V(\Phi,A)$ is finite are called finite-energy vortices. The finiteness of energy implies that $|\Phi|$ must tend to one as $r\to\infty$. Note that
\begin{equation}
    \frac{1}{2}\int_{\mathbb{H}^2}B dvol = \frac{1}{2}\lim_{r\to\infty}\int_0^{2\pi}A_\theta(r,\theta)d\theta.
\end{equation}
It is not difficult to see that the latter is always an integer multiple of $\pi$, and is independent of the choice of gauge. That is
\begin{equation}
    \frac{1}{2}\int_{\mathbb{H}^2}B dvol=\pi m,
\end{equation}
for an integer $m$ which we call the degree of $\Phi$. See \cite[Section~3.7]{mantonSutcliffe04} for more details. Using this observation, the energy $V(\Phi,A)$ can be factorized as (this is where the choice $\lambda=1$ comes in; in general $V$ would have $\frac{\lambda}{4}(1-|\Phi|)^2$)
\begin{equation}
    V(\Phi,A)=\frac{1}{2}\int_{\mathbb{H}^2}\Big(4|\overline{\partial}_A\Phi|^2+\big(B-(\frac{1}{2}(1-|\Phi|^2))\big)^2\Big)dvol+m\pi,
\end{equation}
where $\overline{\partial}_A:=\frac{1}{2}(D_r+\frac{i}{\sh r}D_\theta)$ is the covariant Cauchy-Riemann operator. This decomposition leads to the following Bogomolny equations for the solution 
\begin{equation}\label{eq:Bogomolny1}
    \overline{\partial}_A\Phi=0,\qquad B=\frac{1}{2}(1-|\Phi|^2).
\end{equation}
It is known that all finite-energy static solutions can be obtained in this manner. See \cite{jaffeeTaubes80,mantonSutcliffe04, stuart94}. In particular, we can find (see for instance \cite{strachan92,mantonSutcliffe04,Schiff91}) equivariant solutions $(\Phi[Q],A[Q])$ of the form
\begin{equation}
    \Phi[Q](r,\theta)=Q(r)e^{im\theta},\qquad A_r[Q]=0,\qquad A_\theta[Q](r,\theta)\equiv A_\theta[Q](r).
\end{equation}
The last equation is simply the equivariant condition that $A_\theta[Q]$ is independent of $\theta$. Here the vanishing of $A_r[Q]$ is a gauge choice which we refer to as the Coulomb gauge.  The functions $Q$ and $A_\theta[Q]$ satisfy the identities
\begin{equation}
\label{eq:Q}
    \big(\partial_r+\frac{A_\theta[Q]-m}{\sh r}\big)Q=0,\qquad \frac{1}{\sh r}\partial_r A_\theta[Q]=\frac{1}{2}(1-|Q|^2).
\end{equation}

Returning to the time-dependent equation \eqref{eq:Mantonintro1}, note that any solution of \eqref{eq:Bogomolny1} with $A_0\equiv0$ constitutes a solution of \eqref{eq:Mantonintro1}. To consider time-dependent solutions we introduce the notation (note that $D_+=\overline{\partial}_A)$ 
\begin{equation}
    D_+:=e^{i\theta}(D_r+\frac{i}{\sh r}D_\theta),\qquad D_{+}^\ast:=e^{-i\theta}(D_r^{\ast}+\frac{1}{\sh r}+\frac{i}{\sh r}D_\theta),
\end{equation}
where $D_r=\partial_r-iA_r$, $D_{\theta}=\partial_{\theta}-iA_{\theta}$, and $D_r^\ast=\partial_r^\ast+iA_r$ with $\partial_r^\ast=-\partial_r-\coth r$.
It follows that $D_+^\ast D_+=-\Delta_A-B$ where $\Delta_A:=D_r^2+\coth(r)D_r+\frac{1}{\sh^2(r)}D_{\theta}^2.$ Therefore the first equation in \eqref{eq:Mantonintro1} can be written as (here we have also used the third equation in \eqref{eq:Mantonintro1})
\begin{equation}
    iD_t\Phi-\frac{1}{2}D_{+}^\ast D_{+}\Phi=0.
\end{equation}

We now restrict attention to equivariant solutions and impose the Coulomb gauge. That is, we consider solutions of the form
\begin{equation}
\label{eq:equiv-gauge}
    \Phi=e^{im\theta}\phi(t,r), \qquad A_0\equiv A_0(t,r),\qquad A_\theta\equiv A_\theta(t,r),\qquad A_r=0.
\end{equation}
Solutions of this form are called equivariant of degree $m$. Under these assumptions, and in view of our earlier observations, equation \eqref{eq:Mantonintro1} can be written as
\begin{align}
 \label{GL-g-2}
   & i \,  \, D_t \Phi - \frac 12  D_+^{\ast} D_+ \Phi  =0  \\
    \label{eq_E_r_2}
 & \partial_r B - 2   \partial_r A_0 = \frac{1}{\sh(r)} \im \left( \Phi \overline{D_{\theta} \Phi} \right)   \\
  \label{eq_E_theta_2}
 &  2  \partial_0 A_{\theta} =-\sh(r) \im \left( \Phi \overline{D_{r} \Phi} \right)   \\ 
    \label{eq_B_2}
& B=\frac{1}{\sh(r)} \partial_r A_{\theta}= \frac{1}{2}(1- |\Phi|^2)
\end{align}
In fact, the third equation is a consequence of the other equations if it is initially satisfied. See \cite{manton1997}. Note that in view of equations \eqref{eq_E_r_2} and \eqref{eq_B_2}, $A_\theta$ and $A_0$ can be obtained from $\Phi$ by integration in $r$ for each fixed time. 

As mentioned earlier, our interest in this work is to prove the asymptotic stability of $(Q,A_\theta[Q])$ as a solution of this system. For this it is natural to linearize the system about $(Q,A_\theta[Q])$. This linearization is carried out in detail in Section~\ref{sec:Proof-theo}. The result can be summarized as follows. Let (recall that we are using the Coulomb gauge $A_r\equiv0$)
\begin{align*}
    \Phi(t,r,\theta)=e^{im\theta}\big(Q(r)+\epsilon(t,r)\big),\qquad A_\theta(t,r)=A_\theta[Q](r)+a_\theta(t,r).
\end{align*}
Define the pair of operators 
\begin{align*}
  &L_{Q} (\varepsilon) = \partial_r \varepsilon + \frac{A_{\theta}[Q]-m}{\sh(r)} \varepsilon - Q B_Q(\varepsilon), \\
  &B_Q(\varepsilon)= \frac{1}{\sh(r)} \int_0^r \re(Q\varepsilon) \sh(s) ds,
\end{align*}
and
\begin{align*}
    &L^{\ast}_Q =  \partial_r^{\ast} + \frac{A_{\theta}[Q]- m }{\sh(r)} - B_Q^{\ast}(Q \cdot) , \\ 
    &B_Q^{\ast} f = Q \int_r^{\infty} \re(f) ds . 
\end{align*}
These operators are formal adjoints with respect to the real inner product $$(u,v)= \re \int u \bar{v} \sh(r) dr.$$
The desired linearized equation can then be written as
\begin{equation}\label{eq:vareplinintro1}
    i\partial_t\varepsilon-\frac{1}{2}L_Q^\ast L_Q\epsilon=G(\epsilon),
\end{equation}
where $G(\epsilon)$ denotes the nonlinearity and is given in \eqref{eq:N-epsi}. In   Appendix \ref{app:LWP}, we prove that the equation \eqref{eq:vareplinintro1} is locally well-posed in $H^1_m$. See below for the definition of $H^1_m.$

Note that  $A_0$ and $a_\theta$ can be written as non-local expressions in $\varepsilon,$ involving both linear and nonlinear terms. The linear part appears in the nonlocal term of the linearized operator, and their nonlinear contributions are included in the nonlinearity. The super-symmetric factorization $L_Q^\ast L_Q$ is a manifestation of the Bogomolny  structure of the self-dual equations. However, this comes with some caveats: $L_Q$ and $L_Q^\ast$ are \emph{non-local}, \emph{are not complex linear}, and are formal adjoints only with respect to the \emph{real inner product} $(\cdot,\cdot)$ introduce above.   

\subsection{The main result and outline of the argument}
To state our main result we introduce the appropriate function spaces. By $H^k$ we mean the usual Sobolev space on $\mathbb{H}^2$. On $I \times \mathbb{H}^2$, where $I \subset \R,$ we define the Strichartz norm 
$$\left\| \Phi \right\|_{\mathcal{S}(I)}:=\sup_{(p,q)\; \text{admissible}}  \left\| \Phi \right\|_{L^{p} (I;L^{q}(\mathbb{H}^2))}, $$
where $(p,q)$ are admissible pairs if 
\begin{align*}
  \bigg\{ p,q \in (2,\infty) \; \big| \;  1-\frac{2}{q}  \leq  \frac{2}{p} \bigg\} \bigcup \bigg\{ (p,q)=(\infty,2) \bigg\} .\\
\end{align*}
We denote the dual Strichartz space by  $\mathcal{N}(I)=\mathcal{S}(I)^{\ast}.$ Notice that, by construction we have 
\begin{align*}
   \left\| F \right\|_{\mathcal{N}(I)} \leq  \left\| F \right\|_{L^{p^{\prime}} (I;L^{q^{\prime}}(\HH^2)) },
\end{align*}
where $(p,q)$ are admissible pairs. It is convenient to use the following norms for higher regularities,
\begin{align*}
  \left\| f \right\|_{\mathcal{S}^1(I)}:= \left\| f \right\|_{\mathcal{S}(I)} + \left\| \nabla f \right\|_{\mathcal{S}(I)} ,  
\end{align*}
and 
\begin{align*}
  \left\| f \right\|_{\mathcal{N}^1(I)}:= \left\| f \right\|_{\mathcal{N}(I)} + \left\| \nabla f \right\|_{\mathcal{N}(I)} .  
\end{align*}
Note that since the Laplacian in $\mathbb{H}^2$ has spectrum $[-\frac{1}{4},\infty)$ (see for instance \cite{Helgason94}), the undifferentiated term is in fact redundant.  

By a slight abuse of notation we also write $\|F\|_{L^p}=\Big(\int_0^\infty |F(r)|^p\sh r d r\Big)^{\frac{1}{p}}$ whenever $F$ is a radial function. In general, if $F$ is a radial function we define 
\begin{align*}
  \left\| F \right\|_{X_m}:= \left\| e^{im \theta} F \right\|_X,
\end{align*}
where $X$ can be any Sobolev, Strichartz or dual Strichartz norm. For instance
\begin{align*}
    \|F\|_{H^1_m}\simeq \|\partial_rF\|_{L^2}+\left\| \frac{F}{\sh r}\right\|_{L^2}.
\end{align*}

We are now ready to state the main result of this work, which is the stability of $(Q,A_\theta[Q])$.

\begin{theorem}
\label{main-theo}
There exists $\delta>0$ such that if $\varepsilon_0$ is a radial function with $\|\varepsilon_0\|_{H^1_m}\leq \delta$ then \eqref{eq:vareplinintro1} has a global solution $\varepsilon(t)$ with $\varepsilon(0)=\varepsilon_0$ and $\|\varepsilon\|_{\mathcal{S}^1_m(\mathbb{R})}\lesssim \delta$.
\end{theorem}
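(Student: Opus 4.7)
The overall plan is a small-data bootstrap in the Strichartz space $\mathcal{S}^1_m(\mathbb{R})$, but the super-symmetric factorization $-\tfrac{1}{2}L_Q^*L_Q$ cannot be used directly: the residual gauge direction $iQ$ lies in the real kernel of $L_Q$ (a consequence of \eqref{eq:Q} together with $B_Q(iQ)=0$), so $L_Q^*L_Q$ carries a zero mode and no usable Strichartz estimate for $e^{-itL_Q^*L_Q/2}$ holds at the level of $\varepsilon$. Following the strategy flagged in the abstract, we pass to the nonlinear Darboux variable
\[
    w := L_Q\varepsilon,
\]
which projects out this direction and is driven by the conjugate operator $L_QL_Q^*$. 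Up to bounded potentials built from $Q$ and $A_\theta[Q]$, the latter is a perturbation of the magnetic Laplacian $-\Delta_A$ on $\mathbb{H}^2$ and inherits the spectral gap of $-\Delta$.

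Applying $L_Q$ to \eqref{eq:vareplinintro1} yields
\[
    i\partial_t w - \tfrac{1}{2}L_QL_Q^* w \;=\; L_Q G(\varepsilon) + [i,L_Q]\,\partial_t\varepsilon,
\]
with a commutator correction because $L_Q$ is only real-linear (the nonlocal piece $B_Q$ acts through the real part); a direct computation gives $[i,L_Q]f = -Q\bigl(B_Q(\mathrm{Im}\, f)+iB_Q(\mathrm{Re}\, f)\bigr)$. Substituting $\partial_t\varepsilon$ back from \eqref{eq:vareplinintro1} and expressing $a_\theta$ and $A_0$ in terms of $\varepsilon$ via \eqref{eq_E_r_2}--\eqref{eq_B_2} packages every source term into contributions that are quadratic or higher in $(\varepsilon,w)$. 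Strichartz estimates for $e^{-itL_QL_Q^*/2}$ then follow from the hyperbolic Schr\"odinger theory of Anker--Pierfelice \cite{AnkerJeanPierfelice09} and Banica--Duyckaerts \cite{banicaDuyckaerts14} after a perturbative treatment of the potential terms, and the nonlinear source is controlled in $\mathcal{N}(\mathbb{R})$ by H\"older and Sobolev embedding on $\mathbb{H}^2$.

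The crux is \emph{inverting} the Darboux transform to recover $\|\varepsilon\|_{\mathcal{S}^1_m}$ from $\|w\|_{\mathcal{S}}$. The equation $L_Q\varepsilon = w$ is the nonlocal first-order ODE
\[
    \partial_r\varepsilon + \frac{A_\theta[Q]-m}{\sh r}\,\varepsilon - QB_Q(\varepsilon) \;=\; w,
\]
with one-dimensional real kernel $\{icQ : c\in\mathbb{R}\}$ corresponding to residual constant gauge transformations; this direction is neutralized by observing that the Coulomb-gauged flow preserves the $(\cdot,\cdot)$-orthogonal complement of $iQ$, on which $L_Q$ is injective. Since $Q$ and $A_\theta[Q]$ are not explicit on the curvature $-1$ plane, the analysis must proceed through sharp asymptotics at $r=0$ and $r=\infty$ (using $Q \sim r^m$ near the origin, and $Q\to 1$, $A_\theta[Q]\to m$ at infinity) combined with variation of parameters against $Q$. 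The main technical obstacle, which is exactly the elliptic analysis the authors flag in the abstract, is to show that the resulting inverse is bounded not only $L^2_m\to H^1_m$ but also $\mathcal{S}(I)\to \mathcal{S}^1_m(I)$ uniformly in $I$ and without derivative loss, despite the nondecaying coefficient of the nonlocal term $QB_Q(\varepsilon)$. Granted these two ingredients (Strichartz bounds for $L_QL_Q^*$ and the elliptic inversion of $L_Q$), a contraction argument for $\delta$ sufficiently small closes the bootstrap in $(\varepsilon,w)$ and produces the global solution with $\|\varepsilon\|_{\mathcal{S}^1_m(\mathbb{R})}\lesssim \delta$.
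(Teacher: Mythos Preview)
Your strategy has the right overall shape, but the specific choice of Darboux variable is where it goes wrong. You set $w=L_Q\varepsilon$ and work with the conjugate operator $L_QL_Q^*$. But $L_Q$ and $L_Q^*$ both contain the nonlocal, real-part-only operator $B_Q$, and no cancellation occurs in the product: $L_QL_Q^*$ is still nonlocal and not complex-linear. So the hyperbolic Strichartz theory of Anker--Pierfelice or Banica--Duyckaerts, which applies to operators of the form $-\Delta+V$, is simply not available, and ``perturbative treatment of the potential terms'' cannot repair this. Relatedly, your claim that the commutator source becomes quadratic is not right: substituting $\partial_t\varepsilon=-\tfrac{i}{2}L_Q^*w-iG(\varepsilon)$ into $[i,L_Q]\partial_t\varepsilon$ produces a term linear in $w$, which would have to be absorbed into the linear operator and only makes it worse.

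The paper's key observation is that one must use the \emph{full nonlinear} Darboux transform $\varepsilon_1:=\widetilde D_{+,m}\phi$, i.e.\ the covariant derivative with the complete connection $A=A[Q]+a$, not just its linearization $L_Q\varepsilon$. Differentiating the full gauged equation by $D_+$ and using the constraint equations for $A_0$ and $A_\theta$ produces exact cancellations of the nonlocal and non-complex-linear pieces, leaving a genuine Schr\"odinger operator $\mathcal R_Q=-\Delta+V+1=A_Q^*A_Q-1$ with $A_Q=\partial_r+\tfrac{A_\theta[Q]-m}{\sh r}-\coth r$ purely differential and complex-linear. This is the ``favorable structure'' alluded to in the abstract; spectral analysis of $\mathcal R_Q$ (no eigenvalues or threshold resonance) then yields Strichartz estimates. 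For the inversion, rather than directly solving the first-order nonlocal ODE $L_Q\varepsilon=w$, the paper derives a second-order elliptic equation $(-\Delta_{\mathbb H^2}+Q^2)\bigl(\tfrac{\mathrm{Re}\,\varepsilon}{Q}\bigr)=F(\varepsilon,\varepsilon_1)$ together with an integral formula for $\mathrm{Im}\,\varepsilon$, and obtains $L^p$ bounds via Green's function asymptotics of $-\Delta+Q^2$. Your ODE/variation-of-parameters picture is in the right spirit, but the kernel direction $iQ$ you worry about is not in $L^2$ (since $Q\to1$), and no orthogonality condition is imposed; the actual issue is controlling the Green's kernel near $r=0$.
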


\begin{remark}
We also prove local existence for \eqref{eq:vareplinintro1} with arbitrary data in $H^1_m$ in Appendix~\ref{app:LWP}.
\end{remark}

\subsubsection*{Outline of the argument}
The main difficulty of the proof of the main theorem is that the operators $L_Q$ and $L_Q^\ast$ in \eqref{eq:vareplinintro1} are non-local and more importantly are not complex linear. Therefore, instead of working with equation~\eqref{eq:vareplinintro1} we work with a related equation which is obtained by applying a (nonlinear) Darboux transform to the original equation~\eqref{GL-g-2}. Specifically, we differentiate equation \eqref{GL-g-2} with respect to $D_+$ and view $D_+\Phi$ as the new unknown. Note that if $D^Q_+$ denotes the covariant derivative with respect to $A[Q]$, then in view of~\eqref{eq:Bogomolny1} we have $D_+^QQ=0$. Therefore $D_+\Phi$ can be viewed as a new linearized unknown. If $\Phi$ is equivariant of degree $m$, then $D_+\Phi$ is equivariant of degree $m+1$ and can be written as $e^{i(m+1)\theta}\varepsilon_1(t,r)$. The new variable $\varepsilon_1$ will now satisfy a more favorable equation of the form
\begin{align*}
    i\partial_t\varepsilon_1-\frac{1}{2}\mathcal{R}_Q\varepsilon_1=N(\varepsilon_1),
\end{align*}
for a suitable nonlinearity $N$ involving $\varepsilon$ and $\varepsilon_1$. The differential operator $\mathcal{R}_Q$ is a standard Schr\"odinger operator of the form $\mathcal{R}_Q=-\Delta+V+1$ with $V$ an exponentially decaying potential. Moreover, $\mathcal{R}_Q$ admits a factorization of the form $\mathcal{R}_Q=A_Q^\ast A_Q-1$ where $A_Q$ and $A_Q^\ast$ are local, complex 
linear, first order differential operators which are formal adjoints with respect to the complex inner product $\langle u,v\rangle=\int_0^\infty u(r) \overline{v(r)}\sh r dr$. The favorable structure of the equations after applying the nonlinear Darboux transform, is similar to the related structure for the classical Schr\"odinger-Chern-Simons system studied in \cite{kimKwonwOh20,kim2Kwon23,kimKwonOh22}. We will show that if $\|\varepsilon\|_{H^1_m}$ is small, then $\|\varepsilon_1\|_{L^2}$ and $\|\varepsilon\|_{H^1_m}$ are equivalent in size, see Lemmas~\ref{lem:epH1mtoep1} and~\ref{lem:epLptoepH1m}. \\

 The most delicate aspect of nonlinear analysis is  estimating the non-local terms $A_0$, $a_\theta$, and $\varepsilon$ in terms of $\varepsilon_1,$  which are most challenging near $r=0.$ The first step is to derive a system of coupled nonlinear elliptic equations relating the original variable $\varepsilon$ to $\varepsilon_1$ in terms of the  linearized  potential $a_{\theta}$. See Section \ref{sec:Prop-proof} for the derivation of these equations.  This formulation naturally leads to the spectral analysis of the operator $\mathcal{H} = -\Delta_{\HH^2} + Q^2$. The spectral properties of $\mathcal{H}$ will, in turn, allow us to invert the associated elliptic operator and construct the solution to the nonlinear elliptic equation via a Lyapunov–Perron type argument. In particular, we write the integral representation of the solution via the Green's kernel. With this framework in place, we derive suitable bounds for the associated Green's kernel to estimate $\varepsilon$ in terms of $\varepsilon_1$ and $a_{\theta}$.  This step relies on precise estimates of the $L^p$-norms for small and large $r,$ using the integral representation of the solution, combined with the asymptotic behavior of the fundamental system for the elliptic equation. The bounds on $A_0$ and $a_\theta$ are then obtained by writing them as explicit integrals in terms of $\varepsilon$ and $\varepsilon_1$. The remainder of the argument consists of proving the full range of Strichartz estimates for the linearized operator for $\varepsilon_1$ (see \cite[Corollary 1.19]{LLOS23}) and estimating the nonlinearity in the dual Strichartz norm. Note that while the non-linearity contains non-localized quadratic terms, the larger range of admissible norms on $\mathbb{H}^2$ compared with $\mathbb{R}^2$ allow us to treat these by Strichartz estimates. \\

\textbf{Notation: }
We denote by $C>0$ an absolute constant that may change from line to line. For non-negative $f$ and $g$ we write $f \lesssim  g,$ if $f \leq C g,$ and if $f \ll g$ to indicate that the implicit constant is small. We use the notation $f=O(g)$ if $|f| \lesssim g. $

\section{Derivation of the equations and analysis of the linearized operators}
\label{sec:Proof-theo}
In this section we first derive the linearized equation for $\varepsilon$. Strictly speaking, this equation is not needed for the proof of the main theorem and is derived here for completeness. More relevant is the equation for the Darboux-transformed variable $\varepsilon_1$. The derivation of this equation, and the elliptic equation relating $\varepsilon$ and $\varepsilon_1$. Deriving these equations is the main content of this section. We then end by a preliminary analysis of the resulting linearized operators.

\subsection{Derivation of the equations}

First, we linearize about the $m$-degree vortex. We decompose the m-equivariant solution $(\Phi,A_{\theta})$ to  \eqref{GL-g-2}-\eqref{eq_B_2} as 

\begin{equation}
\label{eq:decom1}
\Phi= e^{im\theta } \phi= e^{im\theta } ( Q(r)+ \varepsilon(t,r)), \qquad \qquad  A_{\theta}=A_{\theta}[Q] + a_{\theta} ,
\end{equation}
where $(Q,A_{\theta}[Q])$ satisfy
\begin{equation}
\label{Q-equation}
      (\partial_r +  \frac{A_{\theta}[Q] -m } {\sh(r)} )   Q = 0 \qquad \text{and} \qquad 
\frac{1}{\sh(r)}  \partial_r A_{\theta}[Q]= \frac 12 (1-|Q|^2) 
\end{equation}
By \eqref{eq_B_2}, we have 
\begin{align}
\label{Linear-A} 
   A_{\theta}  &=A_{\theta}[Q] + a_{\theta}
\quad \text{where} \quad  a_{\theta}=- \int_{0}^r \re(Q \varepsilon)\sh(s) ds- \frac{1}{2} \int_{0}^r |\varepsilon|^2 \sh(s) ds 
\end{align}

If $\Phi=e^{im\theta } \phi$ is $m$-equivariant then $D_{+}  \Phi$ is $(m+1)$-equivariant and $D_{+}^{\ast}  \Phi$ is $(m-1)$-equivariant, i.e., 
\begin{align*}
D_{+} (e^{ i m \theta}  \phi)  
&=  e^{ i  (m+1) \theta} \widetilde{D}_{+,m}(\phi) \quad \text{where } \qquad  \widetilde{D}_{+,m}(\phi)=  \partial_r + \frac{A_{\theta} -m }{\sh(r)} \\
D_{+}^{\ast} (e^{ i  m \theta}  \phi) 
&= e^{ i  (m-1) \theta} \widetilde{D}_{+,m}^{\ast}(\phi) \quad \text{where } \qquad  \widetilde{D}_{+,m}^{\ast}(\phi)= \partial_r^{\ast} + \frac{A_{\theta} -m+1   }{\sh(r)} 
   \end{align*}
Substituting the decomposition \eqref{eq:decom1} into the equation \eqref{GL-g-2}, we obtain 
\begin{align}
\label{eq:varp4}
    i \partial_t \varepsilon + A_0 Q -\frac{1}{2} \widetilde{D}_{+,m+1}^{\ast} D_{+,m} (Q+\varepsilon) = -A_0 \varepsilon
\end{align}
Since $\phi=Q+\varepsilon,$ by \eqref{Q-equation} we have 
\begin{align*}
\widetilde{D}_{+,m}(\phi)=\partial_r  \varepsilon  + \frac{A_{\theta}[Q]-m}{\sh(r) } \varepsilon  + \frac{a_{\theta}}{\sh(r)} Q + \frac{a_{\theta}}{\sh(r)} \varepsilon.
\end{align*}
Note that by \eqref{Linear-A}, we have $\frac{a_{\theta}}{\sh(r)} \varepsilon$ is a higher order term and 
\begin{align*}
    \frac{a_{\theta}}{\sh(r)}Q &= -\frac{Q}{\sh(r)}\int_0^r \re(Q \varepsilon) \sh(s) ds  -\frac{Q}{2\sh(r)}\int_0^r |\varepsilon|^2 \sh(s) ds  \\
    &:=-B_Q(\varepsilon)-\frac{Q}{2\sh(r)}\int_0^r |\varepsilon|^2 \sh(s) ds.
\end{align*}
Therefore, we have 
\begin{align*}
\widetilde{D}_{+,m}(\phi)=L_Q \varepsilon + \frac{a_{\theta}}{\sh(r)} \varepsilon -\frac{Q}{2\sh(r)}\int_0^r |\varepsilon|^2 \sh(s) ds, 
\end{align*}
where
\begin{align*}
L_Q:=\partial_r  \varepsilon  + \frac{A_{\theta}[Q]-m}{\sh(r) } \varepsilon  -B_Q(\varepsilon),\qquad
    B_Q(\varepsilon):=\frac{Q}{\sh(r)}\int_0^r \re(Q \varepsilon) \sh(s) ds .
\end{align*}
One can check the formal adjoint of $L_Q$ with respect to the real inner product $\langle u , v \rangle=\re \int u \bar{v} \sh(r) dr,$ is 
\begin{align*}
    &L^{\ast}_Q f =  \partial_r^{\ast}f + \frac{A_{\theta}[Q]- m }{\sh(r)}f - B_Q^{\ast}(Q f) , \\ 
    &B_Q^{\ast} f = Q \int_r^{\infty} \re(f) ds . 
\end{align*}
Notice that if $f=\widetilde{D}_{+,m} \phi $ then 
\begin{align*}
 \widetilde{D}_{+,m+1}^{\ast}f-  \phi \int_r^{\infty} \re(\overline{\phi} f) \sh(s) ds = L^{\ast}_Q f    -\varepsilon\int_r^\infty \re((Q+\overline{\varepsilon})f)ds-Q\int_r^\infty \re(\overline{\varepsilon} f)ds+\frac{a_\theta}{\sh r} f.
\end{align*}
Moreover, by \eqref{eq_E_r_2} and \eqref{eq_B_2}, we have $\partial_r A_0=-\frac{1}{2} \re ( \overline{\phi} \widetilde{D}_{+} \phi) .$ Integrating from $r$ to $\infty$  we obtain
\begin{align*}
  A_0 &= \frac{1}{2 } \int_r^{\infty}  \re ( \overline{\phi} f) ds 
\end{align*}
Then in view of \eqref{eq:varp4}, we obtain 
\begin{align}\label{eq:varepsilon-main1}
    i \partial_t \varepsilon -\frac{1}{2} {L}_Q^{\ast} {L}_Q \varepsilon = G(\varepsilon),
\end{align}
where
\begin{align} \label{eq:N-epsi}
\begin{split}
    G(\varepsilon)&
    = -A_0 \varepsilon  + \frac{1}{2} (Q\re(\varepsilon)+\frac{1}{2}|\varepsilon|^2) \varepsilon +\frac{1}{4} |\varepsilon|^2 Q + \frac{a_{\theta}}{\sh(r)} \frac{A_{\theta}[Q]-m}{\sh(r)}  \varepsilon \\
    &  + \frac{1}{2} \bigg(\frac{a_{\theta}}{\sh(r)} \bigg)^2 (Q+ \varepsilon) - Q  \frac{A_{\theta}[Q]-m}{\sh(r)} \frac{1}{2\sh(r)}  \int_0^r |\varepsilon|^2 \sh(s) ds  \\
    &  -\frac{Q}{2} \int_r^{\infty} \re \bigg( (\partial_s \varepsilon + \frac{A_{\theta}[Q]-m}{\sh(s)} \varepsilon + \frac{a_{\theta}}{\sh(s)} \varepsilon  ) \bar{\varepsilon}   \bigg) ds \\
    & + \frac{Q}{2} \int_r^{\infty} \frac{Q}{2\sh(s)} \int_0^s |\varepsilon|^2 \sh(\sigma) d\sigma ds.
\end{split}
\end{align}

As mentioned earlier, the linearized operator is not complex linear and contains non-local terms. Therefore, instead of working with the equation for $\varepsilon,$ we transform the problem into an $(m+1)$-equivariant equation. Specifically, we differentiate the equation with $D_+$ and commute $D_+ $ with $D_t$ and $D_+^{\ast}$. This leads to a self-adjoint linearized operators with respect to the standard complex inner product. The details are as follows.\\

Denote by $\varepsilon_1 := \widetilde{D}_{+,m}( \phi )=\widetilde{D}_{+,m}( Q+\varepsilon ).$ We apply $D_{+}$ to \eqref{GL-g-2}. Note that using \eqref{eq:equiv-gauge}, \eqref{eq_E_r_2}, \eqref{eq_E_theta_2}, and \eqref{eq_B_2} we get
\begin{align}\label{eq:commsforvarep1_1}
\begin{split}
 [D_{+},D_t] \Phi &=   - \frac{1}{2 i } |\Phi|^2 D_+ \Phi   \\
[D_+,D_+^{\ast}] D_+\Phi &=  - 2 \,i  \frac{\ch(r) -1 }{\sh^2(r)} D_{\theta} D_+ \Phi
- 2 \frac{\ch(r)-1}{\sh^2(r)}D_+\Phi 
 +2 BD_+\Phi  \\
  D_{\theta} \left(e^{ i (m+1)\theta } \varepsilon_1 \right) &=- i  \left( A_{\theta} - m - 1 \right) e^{i (m+1) \theta}  \varepsilon_1.
\end{split}
\end{align}
Indeed first note that in view of \eqref{eq:equiv-gauge} equation \eqref{eq_E_theta_2} can be written as
\begin{align*}
    \frac{1}{\sh r}\partial_0A_\theta=-\frac{1}{2}\im(\Phi \overline{D_+\Phi}).
\end{align*}
Using also \eqref{eq_E_r_2}, it follows that
\begin{align*}
    [D_+,D_t]\Phi=-(i\partial_r A_0-\frac{1}{\sh r}\partial_0A_\theta)\Phi=-\frac{1}{2}|\Phi|^2D_+\Phi,
\end{align*}
as claimed. A similar computation using $D_+=e^{i\theta}(D_r+\frac{i}{\sh r}D_\theta)$ and $D_r^\ast=e^{-i\theta}(-D_r-\coth r-\frac{1}{\sh r}+\frac{i}{\sh r}D_\theta)$ gives the commutator identity for $[D_+,D_+^\ast]$. Using the first commutator identity we get
\begin{align*}
    iD_t D_+\Phi-\frac{1}{2}D_+D_+^\ast D_+\Phi-\frac{1}{2}|\Phi|^2D_+\Phi=0.
\end{align*}
Writing $D_+\Phi=e^{i(m+1)\theta}\varepsilon_1$ and using the last two identities in \eqref{eq:commsforvarep1_1}, a careful computation yields
\begin{align}
\label{eq-1-epsilon1}
     i \,  \,D_t  \varepsilon_1   - \frac{1}{2}    \widetilde{D}^{\ast}_{+,m+2} \widetilde{D}_{+,m+1}  \varepsilon_1  +  \frac{\ch(r) -1 }{\sh(r)}\frac{(A_{\theta}-m)}{\sh(r)} \varepsilon_1 -\frac{1}{2}  \varepsilon_1    =0  .
\end{align}

Using the fact that 
\begin{align*}
\widetilde{D}^{\ast}_{+,m+2} \widetilde{D}_{+,m+1} \varepsilon_1= \partial_r^{\ast} \partial_r \varepsilon_1 - \frac{1}{\sh(r)} \partial_r A_{\theta} + \left( \frac{A_{\theta}-m-1}{\sh(r)} \right)^2 \varepsilon_1
\end{align*}
we rewrite, the equation \eqref{eq-1-epsilon1} as 
\begin{multline}
 i \,  \,\partial_t \varepsilon_1 +  A_0 \varepsilon_1   - \frac 12  \bigg(  \partial_r^{\ast} \partial_r \varepsilon_1 + \varepsilon_1 \bigg(   \frac{1}{\sh^2(r)}- \frac{1}{\sh(r)}  \partial_r A_{\theta} - 2 \coth(r) \frac{A_{\theta} -m } {\sh(r)} + \left( \frac{A_\theta - m}{\sh(r)} \right)^2 \bigg)  \bigg)-\frac{1}{2} \varepsilon_1=0     
\end{multline}

Using \eqref{eq_B_2}, \eqref{Q-equation} and the fact that $A_{\theta}=A_{\theta}[Q] + a_{\theta}, $ we have 
\begin{align*}
\frac{1}{\sh(r)} \partial_r A_\theta = \frac{1}{2}(1-|Q+\varepsilon|^2) = \frac{1}{\sh(r)} \partial_r A_\theta[Q] - \re(Q \varepsilon) -\frac{1}{2} |\varepsilon|^2 .
\end{align*}

Thus, we obtain the evolution equation
\begin{align}
    \label{eq-2-epsilon1}
 i \,  \,\partial_t \varepsilon_1    - \frac{1}{2} \mathcal{R}_{Q} \varepsilon_1 =N (\varepsilon_1),
\end{align}
where 
\begin{align*} 
\mathcal{R}_{Q} \varepsilon_1 &= (\partial_r^{\ast} \partial_r +1) \varepsilon_1 +  \varepsilon_1 \bigg(    \frac{1}{\sh^2(r)}- \frac{1}{\sh(r)}  \partial_r A_{\theta}[Q]  + 2 \coth(r) \frac{m -A_{\theta}[Q] } {\sh(r)}  + \left( \frac{A_\theta[Q] - m}{\sh(r)} \right)^2 \bigg) .
\end{align*}
and 
\begin{align*}
   N (\varepsilon_1) &= -  A_0  \varepsilon_1   +\frac{\varepsilon_1}{2} \bigg( \re(Q \varepsilon) + \frac{1}{2} |\varepsilon|^2 -  2 \coth(r)  \frac{a_{\theta}}{\sh(r)}  + 2 \left( \frac{ a_{\theta}}{\sh(r)} \right)\left(  \frac{A_{\theta}[Q]  - m }{\sh(r)}\right)+ \frac{a_{\theta}^2}{\sh^2(r)}   \bigg)    .
\end{align*}
The linear operator $\mathcal{R}_Q$ is now a self-adjoint Schr\"odinger operator with respect to the usual inner product, and the equation is \eqref{eq-2-epsilon1} is the desired Darboux-transformed equation.
\begin{remark}
    Note that introducing the operators
    \begin{align*}
        A_Q &:= \partial_r + \frac{A_{\theta}[Q]-m}{\sh(r)}-\coth r, \qquad 
        A_Q^{\ast} := \partial_r^{\ast} + \frac{A_{\theta}[Q]-m}{\sh(r)}-\coth r,
    \end{align*}
    we can write $\mathcal{R}_Q=A_Q^\ast A_Q -1$ so that 
    \begin{equation}
\label{eq-2-epsilon1-1}
 i \,  \,\partial_t \varepsilon_1    - \frac 12(A_Q^{\ast}A_Q -1)\varepsilon_1   =N (\varepsilon_1)
\end{equation}
\end{remark}

We now turn to deriving the equation relating $\varepsilon$ and $\varepsilon_1$ in terms of the  linearized electromagnetic potential $a_{\theta}.$ From the decomposition of the potential $A$ in \eqref{Linear-A}, we deduce

\begin{align}
\label{eq_a_theta-esp}
    -\partial_r^{\ast} (  \frac{a_{\theta}}{\sh(r)} ) +Q\,  \re(\varepsilon) = - \frac{1}{2} |\varepsilon|^2 .
\end{align}
Recall that  
\begin{align*}
  \varepsilon_1 = \widetilde{D}_{+} ( Q+ \varepsilon)
  =  \partial_r \varepsilon + \frac{A_{\theta}[Q]  - m }{\sh(r)} \varepsilon + \frac{a_{\theta}}{\sh(r)} (Q+ \varepsilon )
\end{align*}
Using \eqref{Q-equation} with the definition of $\varepsilon_1,$ we have

\begin{align}
\label{partial_eps_equ}
\partial_r \big( \frac{\varepsilon}{Q} \big) = \frac{1}{Q} \bigg(\partial_r \varepsilon - \frac{\partial_r Q}{Q} \varepsilon \bigg) = \frac{1}{Q} \left( \partial_r \varepsilon +  \frac{A_{\theta}[Q]  - m }{\sh(r)} \varepsilon    \right) =\frac{1}{Q} \varepsilon_1  -\frac{1}{Q}  \frac{a_{\theta}}{\sh(r)} (Q+ \varepsilon)
\end{align}
Multiplying  by $Q$ and taking the real part, and using the fact that $a_{\theta}$ and $Q$ are real-valued functions, yields
 \begin{align}
  \label{eq_1_epsi--epsi_1}
 \partial_r \big( \frac{\re(\varepsilon)}{Q} \big) Q +  \frac{a_{\theta}}{\sh(r)} Q =  \re(\varepsilon_1)  - \frac{a_{\theta}}{\sh(r)} \re(\varepsilon) 
\end{align}
Denote by $f(r):=\frac{\re(\varepsilon(r))}{Q(r)}$ and $g(r):=\frac{a_{\theta}(r)}{\sh(r)}$. We rewrite \eqref{eq_a_theta-esp} and \eqref{eq_1_epsi--epsi_1} as
\begin{equation}
\begin{cases}
\sh(r) \partial_r f + \sh(r) g = \sh(r) \frac{\re(\varepsilon_1)}{Q} - \sh(r) \frac{a_{\theta}}{\sh(r)} \frac{\re(\varepsilon)}{Q}  \\
-\partial_r^{\ast} g = -  Q^2 \, f - \frac{1}{2} |\varepsilon|^2 
\end{cases}
\end{equation} 
Taking the derivative of the first equation with respect to $r$, we obtain
\begin{align*}
\partial_r^2 f+ \coth(r) f + \partial_r  g + \coth(r) g &= \frac{1}{\sh(r)}  \partial_r \left( \sh(r) \frac{\re(\varepsilon_1)}{Q} \right) - \partial_r \left( \sh(r) \frac{a_{\theta}}{\sh(r)} \frac{\re(\varepsilon)}{Q}  \right)
\end{align*}
which, upon using the second equation, yields
\begin{equation}   
\label{eq:epsilon-epsilon_1-1}
  \mathcal{H}f =F(\varepsilon, \varepsilon_1), \qquad \text{where} \quad \mathcal{H}= -\Delta_{{\mathbb{H}^2}} + Q^2 ,\qquad f=\frac{\re(\varepsilon)}{Q},
\end{equation}
 \begin{align*}
F(\varepsilon, \varepsilon_1)&=          -\frac{1}{\sh(r)}  \partial_r\left(\sh(r) \frac{\re(\varepsilon_1)}{Q} \right) + \frac{1}{\sh(r)}  \partial_r \left( \sh(r) \frac{a_{\theta}}{\sh(r)} \frac{\re(\varepsilon)}{Q} \right) -\frac{1}{2} |\varepsilon|^2 
\end{align*}
This is the main equation relating $\re\, \varepsilon$ with $\varepsilon_1$. Note that $\varepsilon$ itself appears nonlinearly on the right-hand side of this equation. For the  imaginary part, recall that by \eqref{partial_eps_equ} we have
\begin{align*}
\partial_r \big( \frac{\varepsilon}{Q} \big) =\frac{1}{Q} \varepsilon_1  +\frac{1}{Q}  \frac{a_{\theta}}{\sh(r)} (Q+ \varepsilon).
\end{align*}
Integrating between $r$ and $\infty$, we obtain
\begin{align*}
   \varepsilon(r) = - Q(r) \int_r^{\infty}   \frac{\varepsilon_1}{Q(s)}  ds - Q(r) \int_r^{\infty}  \frac{a_{\theta}}{\sh(s)}  ds - Q(r) \int_r^{\infty}   \frac{1}{Q(s)}  \frac{a_{\theta}(s)}{\sh(s)}  \varepsilon(s) ds.
\end{align*}  
Taking the imaginary part and using the fact that $a_{\theta}$ is a real-valued function by \eqref{Linear-A}, we get
\begin{align}
\label{Im-eq_eps}
 \im(\varepsilon(r))=  Q(r) \int_r^{\infty}   \frac{ \im(\varepsilon_1(s))}{Q(s)}  ds - Q(r) \int_r^{\infty}   \frac{1}{Q(s)}  \frac{a_{\theta}(s)}{\sh(s)}  \im(\varepsilon(s)) ds.        
\end{align}
This is the nonlinear equation relating $\im\,\varepsilon$ to $\varepsilon_1$.

Finally, to estimate $\frac{a_\theta}{\sh r}$ and $A_0$ we use the following equations. For $\frac{a_\theta}{\sh r}$, recall that by \eqref{Linear-A}, we have 
\begin{align}
\label{equ-integral-a_theta/sinh}
\frac{a_{\theta}(r) }{\sh(r)} =  - \frac{1}{2\sh(r)} \int_0^r |\varepsilon|^2 \sh(s) ds  - \frac{1}{\sh(r)}\int_0^r Q \re(\varepsilon) \sh(s) ds. 
\end{align}
Next, notice that by \eqref{eq_E_r_2} and \eqref{eq_B_2}, we have $\partial_r A_0=-\frac{1}{2} \re ( \bar{\phi} \title{D}_{+} \phi) .$ Integrating from $r$ to $\infty$ and using the decomposition of $\phi$ together with the definition of $\varepsilon_1$ we obtain
\begin{align}\label{eq:A0toep1}
  A_0 &= \frac{1}{2 } \int_r^{\infty}   \re(\varepsilon_1 ( Q + \bar{\varepsilon})) ds = \frac{1}{2 } \int_r^{\infty}( \re(\varepsilon_1) Q + \re(\varepsilon_1 \bar{\varepsilon})) \, ds.
\end{align}

Equations \eqref{eq-2-epsilon1}, \eqref{eq:epsilon-epsilon_1-1}, \eqref{Im-eq_eps}, \eqref{equ-integral-a_theta/sinh}, and \eqref{eq:A0toep1} are the main equations that we will use for the analysis in this paper. 

\subsection{Analysis of the linear operators $\mathcal{R}_Q$ and $\mathcal{H}$}
Our analysis of well-posedness and stability relies on the Strichartz estimates established in  \cite{LLOS23} and \cite{AnkerJeanPierfelice09} for the Schr\"odinger type equations. In \cite{LLOS23}, the authors treated a general class of operators under suitable spectral assumptions. We verify these assumptions for our operator $\mathcal{R}_Q$ in the following lemma.
\begin{lemma}\label{lem:RQspec1}
    The spectrum for the self-adjoint operator $\mathcal{R}_Q$ is purely absolutely continuous and given by $\sigma(\calR_Q)=[\frac{5}{4},\infty).$ In particular, there is no negative spectrum, there are no eigenvalues in the gap
$[0,\frac{5}{4}),$ and the threshold $\frac{5}{4}$ is neither an eigenvalue nor a resonance.
\end{lemma}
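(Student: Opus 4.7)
The plan combines two structural facts: the short-range Schr\"odinger representation $\mathcal{R}_Q = -\Delta_{\mathbb{H}^2} + W + 1$ on the $(m{+}1)$-equivariant subspace of $L^{2}(\mathbb{H}^{2})$, and the supersymmetric factorization $\mathcal{R}_Q = A_Q^{\ast}A_Q - 1$.

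First I would identify $\sigma_{\mathrm{ess}}(\mathcal{R}_Q) = [\tfrac{5}{4},\infty)$. Using $A_\theta[Q](0) = 0$, $A_\theta[Q](\infty) = m$, $Q(0) = 0$, $Q(\infty) = 1$, and \eqref{Q-equation}, one checks that after pairing the singular $r^{-2}$ pieces at $r = 0$ against the angular-momentum term $\tfrac{(m+1)^{2}}{\sh^{2} r}$ coming from the $(m{+}1)$-equivariant reduction of $-\Delta$, the $r^{-2}$ coefficients cancel exactly and $W$ extends continuously to $r=0$ with $W(0) = -\tfrac{1}{2}$, while at $r = \infty$ all terms of $W$ decay exponentially since $Q \to 1$ and $A_\theta[Q] - m \to 0$ exponentially. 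Thus $W$ is a relatively compact perturbation of $-\Delta$, and Weyl's theorem together with $\sigma_{\mathrm{ess}}(-\Delta+1) = [\tfrac{5}{4},\infty)$ (from the Plancherel decomposition on $\mathbb{H}^{2}$) yields the essential spectrum. Pure absolute continuity on $[\tfrac{5}{4},\infty)$ and absence of embedded eigenvalues then follow from standard Agmon--Kato--Kuroda / Mourre arguments for short-range Schr\"odinger operators on $\mathbb{H}^{2}$ as in \cite{LLOS23, AnkerJeanPierfelice09}.

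The main new content is the strict bound $\mathcal{R}_Q > \tfrac{5}{4}$ on nonzero vectors, which simultaneously rules out negative spectrum, eigenvalues in $[0,\tfrac{5}{4})$, and the threshold eigenvalue. Using $Q' = -\tfrac{A_\theta[Q]-m}{\sh r}Q$ a direct computation gives the intertwining identity $A_Q(Qu) = Q\,\sh r\,\partial_r(u/\sh r)$. Setting $f = Qu$, $v = u/\sh r$, $\tilde{w}(r) = Q(r)^{2}\,\sh^{3} r\, e^{-3r}$, and $\psi(r) = e^{3r/2} v(r)$, the two quadratic forms read
\begin{align*}
\|f\|^{2} = \int_{0}^{\infty} \tilde{w}\,|\psi|^{2}\,dr, \qquad \|A_Q f\|^{2} = \int_{0}^{\infty} \tilde{w}\,\bigl|\psi' - \tfrac{3}{2}\psi\bigr|^{2}\,dr.
\end{align*}
Expanding the square and integrating the cross term by parts (the boundary terms vanish because $\tilde{w}(r) \sim r^{2m+3}$ at $r=0$ while $\psi(0)$ is finite from the smoothness condition $f \sim c\,r^{m+1}$, and $\psi \to 0$ at $r=\infty$ on the form domain) yields
\begin{equation*}
\|A_Q f\|^{2} - \tfrac{9}{4}\|f\|^{2} = \int_{0}^{\infty} \tilde{w}\,|\psi'|^{2}\,dr + \tfrac{3}{2}\int_{0}^{\infty} \tilde{w}'\,|\psi|^{2}\,dr,
\end{equation*}
and a direct differentiation using $A_\theta[Q] \leq m$ (which follows from $\partial_r A_\theta[Q] = \tfrac{\sh r}{2}(1-Q^{2}) \geq 0$ and the boundary values) gives
\begin{equation*}
\tilde{w}'(r) = Q^{2}\,\sh^{2} r\,e^{-3r}\bigl(3e^{-r} - 2(A_\theta[Q]-m)\bigr) > 0,
\end{equation*}
so both integrals on the right are strictly positive unless $\psi \equiv 0$. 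Hence $\|A_Q f\|^{2} > \tfrac{9}{4}\|f\|^{2}$ for every nonzero $f$, i.e.\ $\mathcal{R}_Q > \tfrac{5}{4}$, ruling out the threshold eigenvalue as well.

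Finally, absence of a threshold resonance at $\tfrac{5}{4}$ is checked by an ODE matching argument: $(\mathcal{R}_Q - \tfrac{5}{4})f = 0$ reduces at infinity (using the exponential decay of $W$) to $-f'' - \coth r\, f' + \tfrac{(m+1)^{2}}{\sh^{2} r} f = \tfrac{1}{4} f$, whose asymptotic solutions at $r = \infty$ are $e^{-r/2}$ and $r\,e^{-r/2}$; a Wronskian / shooting argument matching the unique solution regular at $r=0$ (with $f \sim c\, r^{m+1}$) against this asymptotic pair, together with the strict inequality above, rules out a resonance in the usual weighted sense. The main obstacle throughout is precisely this threshold analysis: the sharp constant $\tfrac{9}{4}$ is not accessible by elementary asymptotic arguments alone, but the Liouville substitution $\psi = e^{3r/2} u / \sh r$ combined with the explicit positivity $\tilde{w}' > 0$ (which itself rests on the sign of $A_\theta[Q] - m$) reduces the entire spectral question to the one-dimensional inequality above.
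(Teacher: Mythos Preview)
Your essential-spectrum argument matches the paper's. Your treatment of eigenvalues below $\tfrac54$ is correct but genuinely different: the paper uses a virial/commutator identity with the multiplier $Mu=\beta\,\partial_r u-\partial_r^{\ast}(\beta u)$, $\beta=\frac{\ch r-1}{\sh r}$, together with the monotonicity $\partial_r V<0$, whereas you exploit the factorization $\mathcal R_Q=A_Q^{\ast}A_Q-1$ directly via the substitution $\psi=e^{3r/2}u/\sh r$ and the strict positivity of $\tilde w'=Q^{2}\sh^{2}r\,e^{-3r}\bigl(3e^{-r}+2(m-A_\theta[Q])\bigr)$. Note that what you really need (and have) is $\tilde w\,|\psi|^{2}=|f|^{2}\sh r\to0$ at infinity, which is the radial Sobolev decay on the form domain; this makes your quadratic-form inequality $\mathcal R_Q\ge\tfrac54$ fully rigorous. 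Your route is more intrinsic to the Bogomolny structure, while the paper's virial argument is more portable to non-self-dual settings.

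The genuine gap is the threshold resonance. A resonance $\varphi\sim c\,e^{-r/2}$ is not in $L^{2}$, so your form inequality does not apply, and ``a Wronskian/shooting argument together with the strict inequality above'' is not a proof. The paper handles this by redoing the virial computation on $[0,R]$, tracking the boundary contributions explicitly and showing they cancel as $R\to\infty$. In fact your own method extends in the same spirit: pairing $A_Q^{\ast}A_Q\varphi=\tfrac94\varphi$ with $\varphi$ on $[0,R]$ gives one boundary term $-(A_Q\varphi)(R)\,\overline{\varphi(R)}\,\sh R\to\tfrac{3c^{2}}{4}$, while your expansion produces $-\tfrac32\,\tilde w(R)|\psi(R)|^{2}\to-\tfrac{3c^{2}}{4}$; these cancel and force $\int\tilde w|\psi'|^{2}+\tfrac32\int\tilde w'|\psi|^{2}=0$, hence $\varphi\equiv0$. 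You should carry this out rather than invoking an unspecified shooting argument.
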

\begin{proof}
It well know that the spectrum of the Laplacian on $\HH^2$ is given by $\sigma(\Delta_{\HH^2})=[\frac{1}{4},\infty),$ where $\frac{1}{4}=\left(\frac{d-1}{2} \right)^2 .$ Write $\calR_{Q}= (-\Delta_{\HH^2}+1)+ V(r) ,$ where $V$ is exponentially decaying potential. Since $R_Q$ is a self-adjoint operator then using the Weyl criterion  (see for instance \cite[Theorem {\rm{XIII.14}}]{ReedSimonIV}) one can check that the essential spectrum of the self-adjoint operator $ \calR_{Q}$ is given by $\sigma( \calR_{Q})=[\frac{5}{4},\infty).$\\ 

Next, we prove that $\calR_Q$ has no eigenvalue in $[0,\frac{5}{4}]$. Since $|A_{\theta}[Q]|\leq m$ and $1-Q^2\geq 0,$ then we have $\partial_r V<0.$
Indeed, 

\begin{align*}
   \partial_r V(r) &=  -2 \frac{\ch(r)}{\sh^3(r)} + Q^2 \frac{m-A_{\theta}[Q]}{\sh(r)} - 2 \frac{1}{\sh^2(r)} \frac{m-A_{\theta}[Q]}{\sh(r)}- \coth(r)(1-Q^2) - 2 \coth^2(r) \frac{m-A_{\theta}[Q]}{\sh(r)} \\
   &-  \frac{m-A_{\theta}[Q]}{\sh(r)} ( 1- Q^2) 
   -2 \coth(r) \left( \frac{m-A_{\theta}[Q]}{\sh(r)} \right)^2 < 0 . 
\end{align*}

Let $\beta(r)=\frac{\ch(r)-1}{\sh(r)}$ and $M u(r)=\beta(r) \partial_r u(r) - \partial_r^{\ast} (\beta(r) u(r)).$ Notice that $M^{\ast}=-M,$ and $\beta$ satisfies
\begin{equation}
\label{eq:Propbeta}
   0<\beta<1 , \quad \partial_r \beta >0, \; \;  \text{ and } \; \; \Delta_{\HH^2} \partial_r^{\ast} \beta = 0.
\end{equation}
Assume that $ \calR_{Q}$ has an eigenvalue less than or equal to $\frac{5}{4},$ that is, let $u \in L^2$ such that  $ \calR_{Q} u = \mu^2 u .$ Then we have 
\begin{align*}
0=\langle \mu^2 u, Mu \rangle =\langle \calR_{Q}u, Mu \rangle  = 2 \langle \partial_r \beta \partial_r u, \partial_r u  \rangle + \frac{1}{2}\langle (\Delta \partial_r^{\ast} \beta) u,u  \rangle - \langle \beta (\partial_r V) u,u \rangle
\end{align*}
Using \eqref{eq:Propbeta} with the fact $\partial_r V<0,$ we obtain 
\begin{align*}
    0=\langle \mu^2 u, Mu \rangle  >=2 \langle \partial_r \beta \partial_r u, \partial_r u  \rangle - \langle \beta (\partial_r V) u,u \rangle >0,
\end{align*}
which is impossible. Therefore, the operator $ \calR_{Q}$ has no eigenvalue less than or equal to 
$\frac{5}{4}.$ Next, we show that $\tfrac{5}{4}$ is not a resonance in the sense defined below. By standard ODE techniques, one can check that the possible behavior of $R_Qu = \frac{5}{4} u,$ for $u\in L^{\infty}\backslash L^2 ,$ near $0$ are $\{r^{\frac{1}{2}+m},r^{-1-m} \}$
and near $\infty$ are $\{e^{-\frac{r}{2}}, re^{-\frac{r}{2}}   \}.$ We say $R_Q$ admits a resonance if the subordinate behavior near $0,$ i.e., $r^{\frac{1}{2}+m}$ connects to the subordinate behavior near $\infty,$ i.e., $e^{-\frac{r}{2}}.$ Denote the resonance by $\varphi$ such that $ (-\Delta+1+V) \varphi = \frac{5}{4} \varphi $ or equivalently
\begin{align*}
    (-\Delta-\frac{1}{4}+V) \varphi = 0
\end{align*}
and for some $c>0,$ we have 
\begin{equation}
  \varphi(r):=  \begin{cases}
        c r^{\frac{1}{2}+m}, \quad  \text{as } r \to 0, \\
        c e^{-\frac{r}{2}}, \quad \; \; \, \text{as } r \to \infty.
    \end{cases}
\end{equation}
First notice that one hand, we have  $\int_0^\infty  (-\Delta-\frac{1}{4}+V) \varphi M \varphi \sh(r) dr=0 ,   $ 
and on the other hand, using integration by parts, for some $R>0,$ we have
\begin{align*}
  \int_0^R (-\Delta-\frac{1}{4}+V) \varphi M \varphi \sh(r) dr&= - \beta(R)\sh(R) (\partial_r \varphi(R))^2 + \sh(R) \partial_r \varphi (\partial_r^{\ast} \beta)(R) \\
  &- \frac{1}{2} \varphi^2(R) \sh(R) \partial_r \partial_r^{\ast} \beta (R) + 2 \int_0^R \partial_r \beta (\partial_r \varphi)^2 \sh(r) dr \\
  &- \int_0^r \beta  \partial_r V \psi^2 \sh(r) dr 
\end{align*}
Let $R\to \infty,$ the boundary terms vanishes, indeed, it is equal to 
$-\frac{1}{4} \frac{c^2}{2}-\frac{1}{4} \frac{c^2}{2} + \frac{1}{2} \frac{c^2}{2}=0.$ Since $\beta>0, \partial_r \beta >0, \partial_r V\leq 0$ then we have $  \int_0^{\infty} (-\Delta-\frac{1}{4}+V) \varphi M \varphi \sh(r) dr>0,$ which impossible. Therefore the operator $R_Q$ has no threshold resonance at $\frac{5}{4}$. 
\end{proof}
As a corollary we obtain Strichartz estimates for $\mathcal{R}_Q$.
 
\begin{coro}
\label{Strichartz}
Let $\varepsilon_1(t)$ be a solution of \eqref{eq-2-epsilon1} on any time interval $I \subseteq \R$, with initial condition $\varepsilon_1(0) \in L^2$.  Then for any two
pairs of exponents $(p_1, q_1)$ and $(p_1, q_1)$,  in
\begin{align*}
  \bigg\{ p,q \in (2,\infty) \; \big| \;  \frac{d}{2}-\frac{d}{q}  \leq  \frac{2}{p} \bigg\} \bigcup \bigg\{ (p,q)=(\infty,2) \bigg\}
\end{align*}
we have 
\begin{align*}
   \left\| \varepsilon_1 \right\|_{L^{p_1} (I;L^{q_1}(\mathbb{H}^d))} \lesssim \left\| \varepsilon_1(0) \right\|_{L^2 (\mathbb{H}^d) }  + \left\| N(\varepsilon_1) \right\|_{L^{p_1^{\prime}}(I, L^{q_1^{\prime}})},
\end{align*}
where $\frac{1}{p_1^{\prime}}+\frac{1}{p_1}=1 $ and $\frac{1}{q_1^{\prime}}+\frac{1}{q_1}=1. $ 
\end{coro}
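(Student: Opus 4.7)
The plan is to apply the black-box Strichartz framework of \cite{LLOS23}, specifically \cite[Corollary 1.19]{LLOS23}, which gives the full range of Strichartz estimates for Schr\"odinger equations of the form $i\partial_t u - \tfrac{1}{2}H u = F$ on $\mathbb{H}^d$ whenever $H = -\Delta_{\mathbb{H}^d} + c + V$ is a self-adjoint perturbation of $-\Delta_{\mathbb{H}^d}$ by a sufficiently nice (exponentially decaying) real potential $V$, subject to the spectral conditions that $H$ has purely absolutely continuous spectrum on $[c+\tfrac{(d-1)^2}{4},\infty)$, no eigenvalues in the gap below the threshold, and no threshold resonance. The point of the previous lemma is precisely to verify these hypotheses for our operator.

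Concretely, I would proceed as follows. First, rewrite
\begin{equation*}
\mathcal{R}_Q = -\Delta_{\mathbb{H}^2} + 1 + V(r),
\end{equation*}
where
\begin{equation*}
V(r) = \frac{1}{\sh^2 r} - \frac{1}{\sh r}\partial_r A_\theta[Q] + 2\coth r \,\frac{m - A_\theta[Q]}{\sh r} + \Bigl(\frac{A_\theta[Q]-m}{\sh r}\Bigr)^2 - 1,
\end{equation*}
and observe from the explicit form of $Q$ and $A_\theta[Q]$ (in particular the Bogomolny equations \eqref{Q-equation}) that $V$ is smooth on $(0,\infty)$ and decays exponentially as $r\to\infty$, while the singular contributions at $r=0$ combine in the standard manner with the angular part of the Laplacian acting on $(m+1)$-equivariant functions, giving a genuinely integrable potential on the appropriate equivariant subspace. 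Second, invoke Lemma~\ref{lem:RQspec1} to conclude that $\mathcal{R}_Q$ has purely absolutely continuous spectrum $[\tfrac{5}{4},\infty)$, no eigenvalues in $[0,\tfrac{5}{4})$, and no threshold resonance at $\tfrac{5}{4}$. These are exactly the hypotheses required by \cite[Corollary 1.19]{LLOS23}.

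With the spectral hypotheses in hand, the homogeneous Strichartz estimate
\begin{equation*}
\bigl\|e^{-it\mathcal{R}_Q/2}\varepsilon_1(0)\bigr\|_{L^{p_1}(I;L^{q_1})} \lesssim \|\varepsilon_1(0)\|_{L^2}
\end{equation*}
for every admissible pair $(p_1,q_1)$ in the stated range follows directly from \cite[Corollary 1.19]{LLOS23} (whose proof in turn relies on the sharp dispersive estimates of \cite{AnkerJeanPierfelice09} together with the absence of eigenvalues/resonances to handle low frequencies via a Kato smoothing argument). The inhomogeneous estimate then follows from the standard Christ--Kiselev/$TT^*$ argument: write the Duhamel representation
\begin{equation*}
\varepsilon_1(t) = e^{-it\mathcal{R}_Q/2}\varepsilon_1(0) - i\int_0^t e^{-i(t-s)\mathcal{R}_Q/2} N(\varepsilon_1)(s)\,ds,
\end{equation*}
apply the homogeneous bound together with duality in $(p_2',q_2')$, and sum.

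The main obstacle has really already been overcome in Lemma~\ref{lem:RQspec1}; once the spectral picture is established, the corollary is a straightforward application of the literature. The only place that deserves a bit of care is checking that the singular piece $\frac{1}{\sh^2 r}$ of $V$ is admissible for the Strichartz framework of \cite{LLOS23}: this is handled by working on the $(m+1)$-equivariant subspace, on which this inverse-square-type term combines with the angular Laplacian into the standard equivariant radial Laplacian and does not spoil either the dispersive estimates or the resolvent bounds used in \cite{LLOS23}.
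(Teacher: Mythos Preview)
Your proposal is correct and follows exactly the same approach as the paper: the paper's proof is the single sentence ``This is a direct consequence of \cite[Corollary 1.19]{LLOS23} and Lemma~\ref{lem:RQspec1}.'' Your additional elaboration about the form of the potential, the Duhamel representation, and the role of the $(m+1)$-equivariant subspace in handling the $\sh^{-2}r$ term is accurate background but goes beyond what the paper writes out.
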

\begin{proof}
    This is a direct consequence of \cite[Corollary 1.19]{LLOS23} and Lemma~\ref{lem:RQspec1}.
\end{proof}
In our analysis we will use the following available exponents $(4,4), (\frac{8}{3},\frac{8}{3})$, and $(4,\frac{8}{3}).$

Next we turn to the elliptic operator $\mathcal{H}=\partial_r^\ast\partial_r+Q^2$. Conjugating $\mathcal{H}$ to the half line, i.e., multiplying by $\sh^{\frac{1}{2}}(r)$, we obtain $H:=\sh^{\frac{1}{2}}r\cdot \mathcal{H}\cdot \sh^{-\frac{1}{2}}r$, with
\begin{equation}
    H = -\partial_r^2 -\frac{1}{4\sh^2(r)} + \frac{5}{4} + (Q^2-1). 
\end{equation}
Thus, by \eqref{eq:epsilon-epsilon_1-1} we get 
\begin{align}
\label{eq:epsilon-epsilon_1-2}
     H \tilde{f}=\tilde{F}(\varepsilon, \varepsilon_1) 
\end{align}
where $\tilde{f}:=\sh^{\frac{1}{2}} (r)f $ and $\tilde{F}(\varepsilon, \varepsilon_1):=\sh^{\frac{1}{2}}(r) F(\varepsilon, \varepsilon_1).$
\begin{lemma}\label{lem:Hfundsyst1}
    Zero is not an eigenvalue of the operator $H$. This operator admits a fundamental system of solutions $\{\phi_0, \phi_\infty\}$, $H\phi_0=H \phi_\infty=0$, that up to multiplicative constants satisfy the asymptotic behaviors 
    \begin{align}
\label{asymp_phi0}
     \phi_0(r)\sim
 \begin{cases}
       r^\frac{1}{2}, &r \longrightarrow 0^{+} ,  \\
      e^{\frac{\sqrt{5}}{2} r }, &r \longrightarrow \infty ,
    \end{cases} 
\end{align}
\begin{align}
\label{asymp_phi-infty}
  \phi_{\infty} (r)\sim
 \begin{cases}
     r^\frac{1}{2} \log(r),  &r \longrightarrow 0^{+} ,\\
      e^{-\frac{\sqrt{5}}{2} r },  &r \longrightarrow \infty.
    \end{cases}  
\end{align}
\end{lemma}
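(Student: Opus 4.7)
The strategy is to construct $\phi_0$ and $\phi_\infty$ by ODE analysis at each singular endpoint, and then to link the two endpoints using the positivity of $\mathcal{H} = -\Delta_{\HH^2} + Q^2$. Near $r=0$, since $\sh^{-2}r = r^{-2} + O(1)$ and $Q^2 = O(r^{2m})$, the equation $H\phi = 0$ has indicial equation $\alpha(\alpha-1) + \tfrac{1}{4} = 0$ with double root $\alpha = \tfrac{1}{2}$, and a standard Frobenius construction yields two local solutions with leading behaviors $r^{1/2}$ and $r^{1/2}\log r$. Near $r=\infty$, both $\sh^{-2}r$ and $Q^2 - 1$ decay exponentially, so $H\phi = 0$ is an exponentially small perturbation of $-\phi'' + \tfrac{5}{4}\phi = 0$, and a Volterra/Picard iteration built on the exponentials $e^{\pm\sqrt{5}r/2}$ produces two local solutions with exponentially small relative errors. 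I would take $\phi_0$ to be the regular Frobenius solution near $0$ and $\phi_\infty$ to be the decaying Volterra solution near $\infty$, each extended globally via the linear ODE.

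The lemma will follow once one shows these two globally extended solutions are linearly independent. Suppose for contradiction that $\phi_0 = c\, \phi_\infty$ for some nonzero constant $c$, and set $u(r) := \sh^{-1/2}(r)\,\phi_0(r)$. The leading asymptotics give $u(r) \to \text{const} \ne 0$ as $r \to 0^+$ and $u(r) = O(e^{-(1+\sqrt{5})r/2})$ as $r \to \infty$, so $u$ extends to a smooth, exponentially decaying radial function on $\HH^2$ that lies in $H^1(\HH^2)$ and satisfies $\mathcal{H}u = 0$. Invoking the spectral gap $-\Delta_{\HH^2} \geq \tfrac{1}{4}$ (see \cite{Helgason94}) together with $Q^2 \geq 0$,
\begin{align*}
0 \,=\, \langle \mathcal{H} u, u\rangle_{L^2(\HH^2)} \,=\, \|\nabla u\|_{L^2(\HH^2)}^2 + \|Qu\|_{L^2(\HH^2)}^2 \,\geq\, \tfrac{1}{4}\,\|u\|_{L^2(\HH^2)}^2,
\end{align*}
forcing $u \equiv 0$ and contradicting $u(0^+) \ne 0$. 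Hence $\phi_0$ and $\phi_\infty$ are linearly independent; expanding $\phi_0$ at infinity in the basis $\{e^{\pm\sqrt{5}r/2}\}$ and $\phi_\infty$ near the origin in $\{r^{1/2}, r^{1/2}\log r\}$ then forces the exponentially growing behavior of $\phi_0$ and the logarithmic singularity of $\phi_\infty$ stated in \eqref{asymp_phi0}--\eqref{asymp_phi-infty}. The same quadratic-form computation applied to any hypothetical $L^2$ null vector of $H$ (conjugated back to $\HH^2$) rules out zero as an eigenvalue.

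The main technical obstacle is the coincident indicial roots at $r=0$: both candidate local behaviors $r^{1/2}$ and $r^{1/2}\log r$ are in $L^2(dr)$ near the origin, so the ``regular versus subordinate'' distinction that drives the contradiction above cannot be read off from an $L^2$ criterion alone and must be extracted from the Frobenius expansion itself. This requires a careful construction of $\phi_0$ that rigorously excludes a $\log$-term contamination, together with a verification that the conjugated function $u = \sh^{-1/2}\phi_0$ genuinely lies in $H^1(\HH^2)$ (and hence in the form domain of $-\Delta_{\HH^2}$) so that the spectral-gap estimate applies.
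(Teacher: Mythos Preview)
Your proposal is correct and takes a genuinely different route from the paper. The paper's argument is a Sturm-type comparison: it exhibits an explicit positive supersolution $u=\sh^{1/2}(r)\bigl(m-A_\theta[Q]\bigr)$ with $Hu=\sh^{1/2}(r)\ch(r)\bigl(1-Q^2(r)\bigr)\geq 0$, assumes a nontrivial $L^2$ null vector $v$ positive near the origin, lets $r_0\in(0,\infty]$ be its first zero, and integrates $\int_0^{r_0}(Hu)\,v\,dr$ by parts to reach a sign contradiction. Your quadratic-form argument is cleaner and uses strictly less structure---only $Q^2\geq 0$ and the standard spectral gap $-\Delta_{\HH^2}\geq\tfrac14$, rather than a supersolution tailored from $A_\theta[Q]$. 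The one point where the paper's approach is slightly more elementary is that it sidesteps the form-domain issue at $r=0$ (limit-circle endpoint with coincident indicial roots), but your handling of this is correct: the $r^{1/2}\log r$ branch conjugates to $\log r$ on $\HH^2$, which lies in $L^2$ but not in $H^1$ (since $\int_0^\epsilon r^{-2}\cdot\sh r\,dr=\infty$), so membership in the form domain of $\mathcal{H}$ genuinely singles out the $r^{1/2}$ solution and your spectral-gap inequality applies.
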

\begin{proof}
Consider the equation $H\psi=0$. Near $r=0,$ the possible asymptotics of $\psi$ are $\{r^{\frac{1}{2} }, r^{\frac{1}{2}} \log(r)  \} $ and for large $r$ the possible asymptotics are $\{e^{-\frac{\sqrt{5}}{2} r},e^{\frac{\sqrt{5}}{2} r} \}$. Indeed, the operators $-\partial_r^2 -\frac{1}{4r^2}$ and $-\partial_r^2 + \frac{5}{4}$ are a good approximation of $H$ near $r=0$ and $r=\infty,$ respectively. The claim about the possible asymptotics then follows from the variation of parameters formula. Let $u =\sh^{\frac{1}{2}}(r)(m-A_{\theta}[Q]), $ then we have $H(u)=\sh^{\frac{1}{2}}(r) \ch(r) (1-Q^2(r))\geq 0. $ Assume by contradiction, that there exists a nontrivial function 
$v \in L^2(0,\infty)$ such that $Hv = 0.$ Without loss of generality, we may suppose that $ v $ is positive in a neighborhood of the origin. Since $ v \in L^2(0,\infty)$, then it must vanish at some point in $[0,\infty]$. Let $0 < r_0 \leq \infty$ be the first value such that $v(r_0)=0.$ Notice that $0< \int_0^{r_0} Hu \, v dx.$ Using integration by parts, one can see that $\int_0^{r_0} Hu \, v dx \leq 0,$ which is impossible. Therefore, the only possible asymptotics for $\phi_0$ and $\phi_{\infty}$ are those in \eqref{asymp_phi0} and \eqref{asymp_phi-infty}.
\end{proof}
\begin{remark}
    We expect that $H$ has an eigenvalue (possibly more) in the gap $(0,\frac{5}{4})$. Indeed, in the case of the hyperbolic plane with curvature $-\frac{1}{2}$, where the soliton has an explicit formula, this can be verified directly. This fact has no bearing on our analysis.
\end{remark}

As a corollary of Lemma~\ref{lem:Hfundsyst1} the solution $\tilde{f}(r)$ to \eqref{eq:epsilon-epsilon_1-2} admits the representation,
\begin{align*}
\tilde{f}(r)= C \left( \phi_{\infty}(r) \int_0^r \phi_0(s)  \tilde{F}(\varepsilon(s), \varepsilon_1(s)) ds + \phi_{0}(r) \int_r^{\infty} \phi_{\infty}(s) \tilde{F}(\varepsilon(s), \varepsilon_1(s)) ds \right)  ,
\end{align*}
where $C\neq0$ is a constant. 
Therefore, in view of the definition of $\tilde{F}(\varepsilon, \varepsilon_1):=\sh^{\frac{1}{2}}(r) F(\varepsilon, \varepsilon_1),$ and $\tilde{f}(r)=\sh^{\frac{1}{2}}(r) \frac{\re(\varepsilon(r))}{Q(r)}$ we have 
\begin{align}
\label{int-equ-eps}
\begin{split}
    \re(\varepsilon(r))   &= C \frac{Q(r)}{\sh(r)^\frac{1}{2}}   \phi_{\infty}(r) \int_0^r \phi_0(s) \sh^\frac{1}{2}(s) F(\varepsilon(s), \varepsilon_1(s)) ds \\  &+  C \frac{Q(r)}{\sh(r)^\frac{1}{2}} \phi_{0}(r) \int_r^{\infty} \phi_{\infty}(s) \sh^\frac{1}{2}(s) F(\varepsilon(s), \varepsilon_1(s)) ds 
\end{split}
\end{align}
The next lemma provides the necessary asymptotics for $\phi_0$ as $r \to 0$, which will be needed for a later estimate using this formula. Note that $\phi_0 \in L^2(0,1)$. Here, we write $H=H_0+ Q^2,$ where $H_0:=- \partial_r^2 + \frac{1}{4}-\frac{1}{4 \sh^2(r)} .$ 
 \begin{lemma}  
Let $\phi_0$ be the bounded solution near $0$ to $H_0 f = 0$. Then, for small $r$, we have:

 \begin{equation}
 \label{Behav_phi_0_small_r}
  \phi_0(r)=\sh(r)^{\frac{1}{2}} + O(r^{2m+2}) \qquad 
 \end{equation} 
 
\end{lemma}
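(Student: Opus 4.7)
The plan is to compare $\phi_0$ to the explicit solution $\sh^{1/2}(r)$ of $H_0f=0$, and control the difference by variation of parameters using the identity $H=H_0+Q^2$ and the near-origin asymptotic $Q(r)\sim c\,r^m$.

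First, I would verify by direct computation that $u_1(r):=\sh^{1/2}(r)$ is an exact solution of $H_0 u=0$; this uses only $\ch^2-\sh^2=1$ and the explicit form of $H_0$. A second linearly independent solution is obtained by reduction of order,
\[
u_2(r)=\sh^{1/2}(r)\int_{r_0}^{r}\frac{ds}{\sh(s)},
\]
whose Wronskian with $u_1$ is identically $1$, and whose near-origin asymptotic is $u_2(r)=\sh^{1/2}(r)(\log r+C+O(r^2))$. The $r^{1/2}\log r$ behavior matches the Frobenius analysis at the double indicial root $\alpha=\tfrac12$ of $-\partial_r^2+\tfrac14-\tfrac{1}{4r^2}$. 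Since the lemma concerns the bounded-at-$0$ solution of $H\phi_0=0$, I note that $\phi_0$ satisfies $H_0\phi_0=-Q^2\phi_0$, and write $\phi_0=\sh^{1/2}(r)+\psi(r)$, so that $\psi$ solves $H_0\psi=-Q^2\phi_0$ with the normalization $\psi(r)=o(\sh^{1/2}(r))$ as $r\to 0^+$ (this uniquely fixes $\phi_0$ up to a multiplicative constant).

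Next, by variation of parameters with $u_1,u_2$,
\[
\psi(r)=-u_1(r)\!\int_0^{r}\!u_2(s)Q^2(s)\phi_0(s)\,ds+u_2(r)\!\int_0^{r}\!u_1(s)Q^2(s)\phi_0(s)\,ds,
\]
and substituting $u_2=u_1\,L$ with $L(r):=\int_{r_0}^r\sh^{-1}(\sigma)d\sigma$ produces the crucial cancellation of the $L$-independent pieces:
\[
\psi(r)=u_1(r)\!\int_0^{r}\!u_1(s)\bigl(L(r)-L(s)\bigr)Q^2(s)\phi_0(s)\,ds,\qquad L(r)-L(s)=\int_s^{r}\frac{d\sigma}{\sh(\sigma)}.
\]
This representation has the advantage that it is independent of the base point $r_0$ and the integrand is integrable down to $0$. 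From the vortex equation $(\partial_r+(A_\theta[Q]-m)/\sh r)Q=0$ together with $A_\theta[Q](r)=O(r^2)$ near $0$, one gets $Q(r)=c\,r^m(1+O(r^2))$.

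Finally, the estimate: treating the integral equation for $\phi_0$ as a Volterra problem on a sufficiently small interval $[0,r_\ast]$, a standard contraction argument in the weighted norm $\sup_{r\le r_\ast}r^{-1/2}|\phi_0(r)|$ gives both existence and the \emph{a priori} bound $\phi_0(s)=O(s^{1/2})$. Inserting this, $Q^2(s)\sim s^{2m}$, $u_1(s)\sim s^{1/2}$, and $L(r)-L(s)=O(\log(r/s))$ into the representation and changing variables $s=r\tau$ yields
\[
|\psi(r)|\lesssim r^{1/2}\!\int_0^{r}s^{2m+1}\log(r/s)\,ds=r^{2m+5/2}\!\int_0^{1}\tau^{2m+1}\log(1/\tau)\,d\tau\lesssim r^{2m+5/2},
\]
which is $O(r^{2m+2})$ for small $r$. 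The main technical obstacle is the circular dependence of the integral equation on $\phi_0$ itself; the Volterra/contraction step above is what makes the bootstrap rigorous, and the opportune cancellation $L(r)-L(s)$ is what avoids a spurious logarithmic loss that would otherwise arise from the $u_2$ term alone.
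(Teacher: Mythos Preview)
Your proposal is correct and follows essentially the same approach as the paper: both use variation of parameters with the explicit fundamental system $\{\sh^{1/2}(r),\ \sh^{1/2}(r)\log\tha(r/2)\}$ for $H_0$ to control $\phi_0-\sh^{1/2}$ via the identity $H_0\phi_0=-Q^2\phi_0$. Your treatment is in fact slightly more careful---you make the logarithmic cancellation manifest through the $L(r)-L(s)$ representation, justify the bootstrap $\phi_0(s)=O(s^{1/2})$ by a Volterra contraction, and arrive at the sharper $O(r^{2m+5/2})$---whereas the paper substitutes leading asymptotics directly and computes the cancellation by hand.
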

\begin{proof}
One can check that  $\psi_1(r)=\sh^{\frac{1}{2}}(r)$ and $\psi_2(r)=\sh^{\frac{1}{2}}(r) \log(\tha(\frac{r}{2}))$ are two solutions of $H_0 \psi=0.$
Recall that $Q(r)=r^{m} +O(r^{m+1}) $ for small $r$. Using the Green's function, we have 
\begin{align*}
 \phi_0(r) &=\sh^{\frac{1}{2}}(r) + C \int_0^r \left( \psi_1(r) \psi_2(s) - \psi_1(s) \psi_2(r) \right) Q^2(s)   \phi_0(s)ds   \\
 & = \sh^{\frac{1}{2}}(r) +  C \left( r^{\frac{1}{2}} \log(\frac{r}{2}) \int_0^r s^{\frac{1}{2}} s^{2m} ds - r^{\frac{1}{2}} \int_0^r  s^{\frac{1}{2}} \log(\frac{s}{2}) s^{2m} ds  \right) + O(r^{2m+3}) \\
 &= \sh^{\frac{1}{2}}(r) + C \left( r^{2m+2} \log(\frac{r}{2}) - r^{2m+2} \log(\frac{r}{2}) + r^{2m+2}   \right) + O(r^{2m+3}) \\
 &=  \sh^{\frac{1}{2}}(r)  + O(r^{2m+2}) .\qedhere
\end{align*}
\end{proof}

\section{Proof of Theorem~\ref{main-theo}}
\label{sec:Prop-proof}
In this section we prove Theorem~\ref{main-theo}. First we prove Strichartz estimates on $\varepsilon_1$ in Proposition~\ref{main-prop}, in Subsection~\ref{subsec:Mainprop}, under suitable smallness assumptions. The analysis of the Green's function for $H$ from the previous section play a key role in  establishing these estimates. Then, in Subsection~\ref{subsec:Proof-of-Theo}, we use Proposition~\ref{main-prop} to complete the proof of Theorem~\ref{main-theo}.

\subsection{The main proposition}
\label{subsec:Mainprop}
In this section we state and prove the main proposition that gives control of the Strichartz norms of $\varepsilon_1$.
\begin{prop}
\label{main-prop}
    Suppose $\|\varepsilon_1(0)\|_{L^2}+\|\varepsilon(0)\|_{H^1_m}\lesssim \delta$. If $\delta$ is sufficiently small, then on any interval $I$ on which  \eqref{eq:varepsilon-main1} and \eqref{eq-2-epsilon1} are satisfied, 
    \begin{align*}
        \sup_{t\in I}\|\varepsilon(t)\|_{H^1_m}+\|\varepsilon_1\|_{\mathcal{S}(I)}\lesssim \delta.
    \end{align*}
    The implicit constants in this estimate is independent of $I$.
\end{prop}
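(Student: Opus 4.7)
The plan is to prove Proposition~\ref{main-prop} by a continuity/bootstrap argument driven by the Strichartz estimate of Corollary~\ref{Strichartz} for $\varepsilon_1$. I set
\begin{align*}
M(I) := \sup_{t\in I}\|\varepsilon(t)\|_{H^1_m}+\|\varepsilon_1\|_{\mathcal{S}(I)}
\end{align*}
and assume the bootstrap hypothesis $M(I) \leq K\delta$ for a large absolute constant $K$. The goal is to prove an estimate of the form
\begin{align*}
M(I) \lesssim \delta + M(I)^2(1 + M(I)),
\end{align*}
which combined with the continuity of $M(I)$ in the endpoint of $I$ and the smallness $M(I) \lesssim \delta$ for short intervals closes the bootstrap once $\delta$ is small enough.

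The first step is to apply Corollary~\ref{Strichartz} to equation \eqref{eq-2-epsilon1}, yielding
\begin{align*}
\|\varepsilon_1\|_{\mathcal{S}(I)} \lesssim \|\varepsilon_1(0)\|_{L^2} + \|N(\varepsilon_1)\|_{\mathcal{N}(I)}.
\end{align*}
The $L^\infty_t L^2_x$ component of $\mathcal{S}$ controls $\sup_t\|\varepsilon_1(t)\|_{L^2}$, and the equivalence $\|\varepsilon(t)\|_{H^1_m}\sim\|\varepsilon_1(t)\|_{L^2}$ under smallness provided by Lemmas~\ref{lem:epH1mtoep1}-\ref{lem:epLptoepH1m} then delivers the $\sup_t\|\varepsilon(t)\|_{H^1_m}$ contribution to $M(I)$. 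Thus closing the bootstrap reduces to establishing the nonlinear estimate $\|N(\varepsilon_1)\|_{\mathcal{N}(I)} \lesssim M(I)^2(1 + M(I))$.

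Every summand of $N(\varepsilon_1)$ (the display just before \eqref{eq-2-epsilon1}) has the structure $\varepsilon_1\cdot X$ with $X$ at least linear in $\{\varepsilon,\, a_\theta/\sh r,\, A_0\}$, so each term is at least quadratic overall. I would estimate these by H\"older, placing one copy of $\varepsilon_1$ in one of the admissible Strichartz spaces $(4,4)$, $(8/3,8/3)$, $(4,8/3)$ listed after Corollary~\ref{Strichartz} and the remaining factor in a complementary Lebesgue-in-time, Lebesgue-in-space norm. Pointwise-in-time bounds for $a_\theta/\sh r$ and $A_0$ follow directly from the explicit integral formulas \eqref{equ-integral-a_theta/sinh} and \eqref{eq:A0toep1}, reducing everything to Lebesgue-in-space bounds on $\varepsilon$ and $\varepsilon_1$. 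For $\re\,\varepsilon$ I would use the Green's kernel representation \eqref{int-equ-eps} for the elliptic operator $H$, and for $\im\,\varepsilon$ the Volterra equation \eqref{Im-eq_eps}, whose nonlinear piece is perturbative under the bootstrap.

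The principal obstacle will be the small-$r$ analysis of \eqref{int-equ-eps}. Near $r=0$ the prefactor $Q(r)\phi_\infty(r)/\sh^{1/2}(r)$ behaves like $r^{m+1/2}\log r$, while the source $F(\varepsilon,\varepsilon_1)$ contains the term $\frac{1}{\sh r}\partial_r(\sh r\cdot\re(\varepsilon_1)/Q)$, which is singular because $1/Q\sim r^{-m}$. Finiteness of the relevant integrals hinges on the refined asymptotic \eqref{Behav_phi_0_small_r}, $\phi_0(r) = \sh^{1/2}(r) + O(r^{2m+2})$, to produce the decisive cancellation in $\int_0^r \phi_0(s)\sh^{1/2}(s)F(\varepsilon,\varepsilon_1)\,ds$: the leading $\sh^{1/2}(s)$ piece combines with the large external prefactor to give a bounded contribution, while the $O(s^{2m+2})$ remainder absorbs the negative powers of $s$ coming from $1/Q$. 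Splitting the $r$-axis at $r=1$, invoking the two-sided asymptotics \eqref{asymp_phi0}-\eqref{asymp_phi-infty} of $\phi_0$ and $\phi_\infty$ on each region, and tracking the algebraic powers carefully would produce the needed Lebesgue bounds on $\re\,\varepsilon$ controlled by $M(I)^2$. With this step in hand, the estimates on $\im\,\varepsilon$, $a_\theta/\sh r$, $A_0$, and the final assembly of $N(\varepsilon_1)$ become relatively routine H\"older arguments which exploit the wider Strichartz-admissible range available on $\mathbb{H}^2$ compared with the Euclidean plane.
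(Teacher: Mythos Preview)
Your proposal is correct and follows essentially the same route as the paper: a bootstrap on $M(I)$, Strichartz for $\varepsilon_1$ via Corollary~\ref{Strichartz}, the equivalence $\|\varepsilon\|_{H^1_m}\sim\|\varepsilon_1\|_{L^2}$ from Lemmas~\ref{lem:epH1mtoep1}--\ref{lem:epLptoepH1m}, and the Green's kernel analysis~\eqref{int-equ-eps} with the refined asymptotic~\eqref{Behav_phi_0_small_r} as the key input near $r=0$. One small technical refinement worth noting: the paper handles the singular linear source $F_1=-\sh^{-1/2}(s)\partial_s\bigl(\sh(s)\re(\varepsilon_1)/Q\bigr)$ by integrating by parts first---moving $\partial_s$ onto $\phi_0(s)\sh^{-1/2}(s)$, whose derivative is $O(s^{2m+1/2})$ by~\eqref{Behav_phi_0_small_r}, with the boundary terms at $s=r$ cancelling between the $\phi_\infty\!\int_0^r$ and $\phi_0\!\int_r^\infty$ pieces---since a direct splitting $\phi_0=\sh^{1/2}+O(s^{2m+2})$ would still leave an uncontrolled $\partial_s\varepsilon_1$ in the remainder term.
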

We will break the proof of the proposition into a number of lemmas. 
First we estimate the $L^p$-norms of $\varepsilon$ in terms of $\varepsilon_1$ for $2 \leq p \leq \infty.$

\begin{lemma} \label{lem:epsi-LP-epsi1L_p}
Let $  2 \leq p,p_1 < \infty$, and $2 \leq q_1 \leq \infty $ 
\begin{align}
     \left\| \varepsilon \right\|_{L^p}  & \lesssim \left\| \varepsilon_1 \right\|_{L^p} + \left\| \frac{a_\theta}{\sh(r)} \right\|_{L^{\infty}}^2  +  \left\| \varepsilon \right\|_{L^{\infty}}^2
\end{align}
\begin{align}
\left\| \varepsilon \right\|_{L^\infty} &\lesssim \left\| \varepsilon_1 \right\|_{L^{p_1}} + \left\| \frac{a_\theta}{\sh(r)} \right\|_{L^{\infty}}^2 +  \left\|\varepsilon_1 \right\|_{L^{q_1}}^2
\end{align}
\end{lemma}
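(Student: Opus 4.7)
The plan is to bound $\re\varepsilon$ and $\im\varepsilon$ separately using the integral identities \eqref{int-equ-eps} and \eqref{Im-eq_eps}, respectively. The imaginary part is the easier of the two: from \eqref{Im-eq_eps} we have
\[
|\im\varepsilon(r)| \le Q(r)\int_r^\infty \frac{|\varepsilon_1(s)|}{Q(s)}\,ds + \Bigl\|\frac{a_\theta}{\sh r}\Bigr\|_{L^\infty}\, Q(r)\int_r^\infty\frac{|\im\varepsilon(s)|}{Q(s)}\,ds,
\]
and the map $h\mapsto Q(r)\int_r^\infty h(s)/Q(s)\,ds$ is a weighted Hardy-type operator that I would prove bounded on $L^p(\sh r\,dr)$ for $2\le p\le\infty$ via Schur's test, using $Q(r)\sim r^m$ near zero and $Q(r)\to 1$ at infinity. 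The first term then contributes $\|\varepsilon_1\|_{L^p}$, while the second is absorbed (iteratively) into the left-hand side, producing the quadratic $\|a_\theta/\sh\|_{L^\infty}^2$ contribution after plugging into itself.

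For $\re\varepsilon$, substitute the full expression for $F(\varepsilon,\varepsilon_1)$ into \eqref{int-equ-eps}. The linear-in-$\varepsilon_1$ piece $-\tfrac{1}{\sh r}\partial_r(\sh r\,\re\varepsilon_1/Q)$ must be handled by integration by parts in each of the inner integrals $\int_0^r$ and $\int_r^\infty$, since we only control $\varepsilon_1$ in $L^p$. The boundary contributions at $s=0$ and $s=\infty$ vanish thanks to the asymptotics of $\phi_0,\phi_\infty$ in Lemma~\ref{lem:Hfundsyst1}, the regularity of $\varepsilon_1$ at the origin enforced by $(m{+}1)$-equivariance, and the exponential decay $\phi_\infty\sim e^{-\sqrt{5}r/2}$ at infinity. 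The crucial observation is that the boundary terms generated at $s=r$ by the two integrals cancel pointwise (both equal $\pm C\phi_0(r)\phi_\infty(r)\re\varepsilon_1(r)$), leaving a genuine integral operator $\mathcal T[\re\varepsilon_1](r)=\int_0^\infty K(r,s)\re\varepsilon_1(s)\,ds$ with
\[
K(r,s)=C\,\frac{Q(r)}{\sh^{1/2}(r)}\Bigl(\phi_\infty(r)\,\partial_s(\phi_0\sh^{-1/2})(s)\,\mathbf{1}_{s<r}+\phi_0(r)\,\partial_s(\phi_\infty\sh^{-1/2})(s)\,\mathbf{1}_{s>r}\Bigr)\frac{\sh(s)}{Q(s)}.
\]
Using the asymptotics of Lemma~\ref{lem:Hfundsyst1} together with the sharp expansion $\phi_0(r)=\sh^{1/2}(r)+O(r^{2m+2})$ from the last lemma of Section~2, I would verify Schur's conditions and conclude $L^p(\sh r\,dr)$-boundedness of $\mathcal T$ for $2\le p<\infty$.

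The remaining nonlinear pieces of $F$ are treated in the same spirit: the $\tfrac{1}{\sh r}\partial_r(\sh r\,\tfrac{a_\theta}{\sh r}\tfrac{\re\varepsilon}{Q})$ contribution yields, after pulling out $\|a_\theta/\sh\|_{L^\infty}$ and applying the same IBP with $\re\varepsilon$ in place of $\re\varepsilon_1$, a quadratic $\|a_\theta/\sh\|_{L^\infty}^2$ term once the resulting bound for $\varepsilon$ is iterated once into itself; the $-\tfrac12|\varepsilon|^2$ term is controlled pointwise by $\|\varepsilon\|_{L^\infty}|\varepsilon|$ against a kernel whose integrability at $0$ and $\infty$ is part of the same asymptotic analysis. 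For the $L^\infty$ estimate one instead checks $\mathcal T:L^{p_1}\to L^\infty$ via $\sup_r\|K(r,\cdot)\|_{L^{p_1'}(ds)}<\infty$, and the naive $\|\varepsilon\|_{L^\infty}^2$ arising from $|\varepsilon|^2$ is upgraded to $\|\varepsilon_1\|_{L^{q_1}}^2$ by applying H\"older with $|\varepsilon|^2\in L^{q_1/2}$ and invoking the already-established $L^{q_1}$ bound on $\varepsilon$.

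The hard part is controlling the kernel $K(r,s)$ near $r=0$: the factors $Q^{\pm 1}(r)\sim r^{\pm m}$, $\phi_0(r)\sim r^{1/2}$, $\phi_\infty(r)\sim r^{1/2}\log r$, and $\sh^{\pm 1/2}(r)\sim r^{\pm 1/2}$ produce competing powers whose balance is delicate. The sharp expansion $\phi_0(r)=\sh^{1/2}(r)+O(r^{2m+2})$ is used decisively here: it forces $\partial_s(\phi_0\sh^{-1/2})$ to vanish to sufficiently high order at $s=0$, compensating the $s^{-m}$ growth from $1/Q(s)$ in the kernel for $s<r$ and thereby making the Schur integrals uniformly finite.
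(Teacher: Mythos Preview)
Your proposal is correct and follows essentially the same route as the paper. The paper likewise separates the real and imaginary parts via \eqref{int-equ-eps} and \eqref{Im-eq_eps}, integrates by parts in the $F_1$ contribution (observing the same cancellation of the boundary terms at $s=r$), uses the sharp expansion $\phi_0(r)=\sh^{1/2}(r)+O(r^{2m+2})$ to control the kernel near the origin exactly as you describe, and applies Schur's test for the large-$r$ regime; the intermediate claims the paper states (\eqref{estim-real-eps}, \eqref{real-eps-infty-norm}, \eqref{estim-imag-eps}, \eqref{imag-eps-infty-norm}) have the cross terms $\|a_\theta/\sh\|_{L^\infty}\|\varepsilon\|_{L^p}$ and $\|\varepsilon\|_{L^\infty}\|\varepsilon\|_{L^p}$ on the right, and the passage to the quadratic form in the lemma statement is precisely the single iteration you indicate.
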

\begin{proof}
We estimate the real and imaginary parts of $\varepsilon$ separately using the integral solution \eqref{int-equ-eps} for the real part, and \eqref{partial_eps_equ} together with the fact that $a_{\theta} $ is real-valued function for the imaginary part. 

\begin{claim}
\label{claim-real-eps-norm}

Let $   2 \leq p,p_1,q_1 < \infty$,  then we have

\begin{align}
\label{estim-real-eps}
     \left\| \re(\varepsilon) \right\|_{L^p}  &\leq \left\| \varepsilon_1 \right\|_{L^p} + \left\| \frac{a_\theta}{\sh(r)} \right\|_{L^{\infty}} \left\| \varepsilon \right\|_{L^p} + 
     \left\| \varepsilon \right\|_{L^p} \left\| \varepsilon \right\|_{L^{\infty}}  
\end{align}

\begin{align}
\label{real-eps-infty-norm}
     \left\| \re(\varepsilon) \right\|_{L^\infty}  &\lesssim \left\| \varepsilon_1 \right\|_{L^{p_1}} + \left\| \frac{a_\theta}{\sh(r)} \right\|_{L^{\infty}} \left\|\varepsilon \right\|_{L^{q_1}} + 
     \left\| \varepsilon \right\|_{L^{\infty}}^2 
\end{align}

\end{claim}
\begin{proof}
 Recall that by \eqref{int-equ-eps}, we have

\begin{align*}
   \re(\varepsilon(r))   &= C \frac{Q(r)}{\sh(r)^\frac{1}{2}}   \phi_{\infty}(r) \int_0^r \phi_0(s) \sh^\frac{1}{2}(s) F(\varepsilon(s), \varepsilon_1(s)) ds \\  &+  C \frac{Q(r)}{\sh(r)^\frac{1}{2}} \phi_{0}(r) \int_r^{\infty} \phi_{\infty}(s) \sh^\frac{1}{2}(s) F(\varepsilon(s), \varepsilon_1(s)) ds  
\end{align*}
where \begin{align*}
\sh^\frac{1}{2}(r) F(\varepsilon, \varepsilon_1)&=          -\frac{1}{\sh^\frac{1}{2}(r)}  \partial_r\left(\sh(r) \frac{\re(\varepsilon_1)}{Q} \right) \\
&+ \frac{1}{\sh^\frac{1}{2}(r)}  \partial_r \left( \sh(r) \frac{a_{\theta}}{\sh(r)} \frac{\re(\varepsilon)}{Q} \right) -\frac{1}{2}\sh^\frac{1}{2}(r) |\varepsilon|^2 
\end{align*}
Denote:
\begin{align}
\label{def-F-1}
F_1(\varepsilon, \varepsilon_1)&=-\frac{1}{\sh^{\frac{1}{2}}(r)}  \partial_r\left(\sh(r) \frac{\re(\varepsilon_1)}{Q} \right) , \\
\label{def-F-2}
F_2(\varepsilon, \varepsilon_1)&=
\frac{1}{\sh^{\frac{1}{2}}(r)}  \partial_r \left( \sh(r) \frac{a_{\theta}}{\sh(r)} \frac{\re(\varepsilon)}{Q} \right), \\
\label{def-F-3}
F_3(\varepsilon, \varepsilon_1)&= -\frac{1}{2}\sh^{\frac{1}{2}}(r) |\varepsilon|^2 . 
\end{align}

Then, we have
\begin{align}
\label{est-real-eps-p-norm}
\int_0^{\infty}    \left| \re(\varepsilon(r)) \right|^p \sh(r) dr &\leq
\int_0^{\infty}   I_1(r) \sh(r) dr +  \int_0^{\infty}   I_2(r) \sh(r) dr +  \int_0^{\infty} I_3(r) \sh(r) dr
\end{align}
where \begin{align*}
 I_1:&=     \frac{Q^p(r)}{\sh^\frac{p}{2}(r)}  \left| \phi_{\infty}(r) \int_0^r \phi_0(s) F_1(\varepsilon(s), \varepsilon_1(s)) ds + \phi_{0}(r) \int_r^{\infty} \phi_{\infty}(s) F_1(\varepsilon(s), \varepsilon_1(s)) ds \right|^p , \\
I_2:&=   \frac{Q^p(r)}{\sh^\frac{p}{2}(r)}  \left| \phi_{\infty}(r) \int_0^r \phi_0(s) F_2(\varepsilon(s), \varepsilon_1(s)) ds +   \phi_{0}(r) \int_r^{\infty} \phi_{\infty}(s) F_2(\varepsilon(s), \varepsilon_1(s)) ds \right|^p , \\
I_3:&=  \frac{Q^p(r)}{\sh^\frac{p}{2}(r)}  \left| \phi_{\infty}(r) \int_0^r \phi_0(s) F_3(\varepsilon(s), \varepsilon_1(s)) ds 
   +   \phi_{0}(r) \int_r^{\infty} \phi_{\infty}(s) F_3(\varepsilon(s), \varepsilon_1(s)) ds \right|^p  .
\end{align*}

\textbf{Step 1: Estimate $I_j$ for r small.} We first estimate $I_1$ for $r \leq \delta,$ for some $\delta >0,$ using the asymptotic \eqref{asymp_phi-infty} for $\phi_{\infty},$ Lemma \ref{Behav_phi_0_small_r}, H\"older's inequality with Schur's Test.

Let $\chi(r)$ be smooth function such that $\chi(r)=1$ if $r \leq \frac{1}{2}$ and $\chi(r)=0$ if $r \geq 1.$ Denote by $\Tilde{\chi}(r) = 1-\chi(r),$ thus $\Tilde{\chi}(r) = 0$ if $r\leq \frac{1}{2}$ and  $\Tilde{\chi}(r) =1$ if $r \geq 1.$ Then we have 

\begin{align*}
 \int_0^{\delta}  I_1(r) \sh(r) dr 
  & \lesssim    \int_0^{\delta}   \frac{Q^p(r)}{\sh^\frac{p}{2}(r)}  \bigg(  \bigg| \phi_{\infty}(r) \int_0^r \phi_0(s) \frac{1}{\sh^{\frac{1}{2}}(s)}  \partial_s\left(\sh(s) \frac{\re(\varepsilon_1)}{Q} \right) ds \\
&  \qquad \qquad \qquad \quad  + \phi_{0}(r) \int_r^{1} \chi(s) \phi_{\infty}(s) \frac{1}{\sh^{\frac{1}{2}}(s)}  \partial_s\left(\sh(s) \frac{\re(\varepsilon_1)}{Q} \right) ds \bigg|^p  \bigg) \sh(r) dr \\
&+ \int_0^{\delta}   \frac{Q^p(r)}{\sh^\frac{p}{2}(r)}  \bigg(  \bigg| \phi_{0}(r) \int_{\frac{1}{2}}^{\infty} \Tilde{\chi}(s) \phi_{\infty}(s) \frac{1}{\sh^{\frac{1}{2}}(s)}  \partial_s\left(\sh(s) \frac{\re(\varepsilon_1)}{Q} \right) ds \bigg|^p  \bigg) \sh(r) dr \\
 & \lesssim    \int_0^{\delta}   \frac{Q^p(r)}{\sh^\frac{p}{2}(r)}  \bigg(  \bigg| \phi_{\infty}(r) \left[\phi_0(s) \sh^{\frac{1}{2}}(r) \frac{\re(\varepsilon_1)}{Q} \right]_{s=0}^{s=r}  \\
 &  \quad \qquad \qquad \qquad- \phi_{\infty}(r) \int_0^r  \partial_s \left( \phi_0(s) \sh^{-\frac{1}{2}}(s) \right) \sh(s) \frac{\re(\varepsilon_1)}{Q} ds \\
&   \quad \qquad \qquad \qquad  +\phi_{0}(r) \left[\chi(s) \phi_{\infty}(s) \sh^{\frac{1}{2}}(r) \frac{\re(\varepsilon_1)}{Q} \right]_{s=r}^{s=1} \\
&  \quad \qquad \qquad \qquad- \phi_{0}(r) \int_r^1  \partial_s \left( \chi(s) \phi_{\infty}(s) \sh^{-\frac{1}{2}}(s) \right) \sh(s) \frac{\re(\varepsilon_1)}{Q} ds \bigg|^p \bigg) \sh(r) dr  \\
&+ \int_0^{\delta}   \frac{Q^p(r)}{\sh^\frac{p}{2}(r)}  \bigg(  \bigg| \phi_{0}(r) \int_{\frac{1}{2}}^{\infty}\Tilde{\chi}(r)  \phi_{\infty}(s) \frac{1}{\sh^{\frac{1}{2}}(s)}  \partial_s\left(\sh(s) \frac{\re(\varepsilon_1)}{Q} \right) ds \bigg|^p  \bigg) \sh(r) dr \\
 & \lesssim   \int_0^{\delta}   \frac{Q^p(r)}{\sh^\frac{p}{2}(r)}    \bigg|   \phi_{\infty}(r) \int_0^r  \partial_s \left( \phi_0(s) \sh^{-\frac{1}{2}}(s) \right) \sh(s) \frac{\re(\varepsilon_1)}{Q} ds \bigg|^p \sh(r) dr \\ 
 &  +\int_0^{\delta}   \frac{Q^p(r)}{\sh^\frac{p}{2}(r)} 
 \bigg|\phi_{0}(r) \int_r^1  \partial_s\left( \chi(s)\right)  \phi_{\infty}(s) \sh^{-\frac{1}{2}}(s)  \sh(s) \frac{\re(\varepsilon_1)}{Q} ds \bigg|^p \sh(r) dr \\
 & + \int_0^{\delta}   \frac{Q^p(r)}{\sh^\frac{p}{2}(r)} \bigg| \phi_{0}(r) \int_r^1   \chi(s) \partial_s\left( \phi_{\infty}(s) \sh^{-\frac{1}{2}}(s) \right) \sh(s) \frac{\re(\varepsilon_1)}{Q} ds \bigg|^p  \sh(r) dr \\
& + \int_0^{\delta}   \frac{Q^p(r)}{\sh^\frac{p}{2}(r)}    \bigg| \phi_{0}(r) \int_{\frac{1}{2}}^{\infty}\Tilde{\chi}(r)  \phi_{\infty}(s) \frac{1}{\sh^{\frac{1}{2}}(s)}  \partial_s\left(\sh(s) \frac{\re(\varepsilon_1)}{Q} \right) ds \bigg|^p  \sh(r) dr \\
& =: \int_0^{\delta} I_{1,1}(r) dr + \int_0^{\delta} I_{1,2}(r) dr + \int_0^{\delta} I_{1,3}(r) dr + \int_0^{\delta} I_{1,4}(r) dr.
 \end{align*}
Using Lemma \eqref{Behav_phi_0_small_r}, we have $\phi_0(s) \sh^{-\frac{1}{2}}(s)=1 + 0(s^{2m+\frac{3}{2}}),$ which yields $\partial_s \left( \phi_0(s) \sh^{-\frac{1}{2}}(s)  \right)=O(r^{2m+\frac{1}{2}}).$ We estimate each of the terms $I_{1,j}$ separately.

\begin{align*}
\int_0^{\delta} I_{1,1}(r) dr  &\lesssim  \int_0^{\delta} \frac{r^{mp}}{r^{\frac{p}{2}}} r^{\frac{p}{2}} (\log(r))^p \left| \int_0^r  s^{2m+\frac{1}{2}} s  \frac{\re(\varepsilon_1)}{s^{m}} ds \right|^p r \, dr  \\
& \lesssim \int_0^{\delta} r^{mp+1 } (\log(r))^p \left| \int_0^r  s^{m+\frac{3}{2}-\frac{1}{p}}   | \varepsilon_1 | s^\frac{1}{p} ds \right|^p  dr \\
& \lesssim \int_0^{\delta}  r^{mp+1 } (\log(r))^p \bigg( \int_0^r | \varepsilon_1|^p \, s \, ds  \bigg) \left( \int_0^r s^{(m+\frac{3}{2}-\frac{1}{p})p^{\prime}} ds \right)^{\frac{p}{p^{\prime}}} \\
& \lesssim \left\|  \varepsilon_1 \right\|_{L^p( r \leq \delta)}^p  \int_0^{\delta}  r^{mp+1 } (\log(r))^p r^{(m+\frac{3}{2}-\frac{1}{p})p+\frac{p}{p^{\prime}}} dr \\
& \lesssim \left\|  \varepsilon_1 \right\|_{L^p( r \leq \delta)}^p  
\end{align*}

\begin{align*}
 \int_0^{\delta} I_{1,2}(r) dr:&=   \int_0^{\delta}   \frac{Q^p(r)}{\sh^\frac{p}{2}(r)} 
 \bigg|\phi_{0}(r) \int_r^1  \partial_s\left( \chi(s)\right)  \phi_{\infty}(s) \sh^{-\frac{1}{2}}(s)  \sh(s) \frac{\re(\varepsilon_1)}{Q} ds \bigg|^p \sh(r) dr \\
 & \lesssim   \int_0^{\delta}   \frac{r^{mp}}{r^{\frac{p}{2}}} r^{\frac{p}{2}} \left| \int_r^1  \partial_s\left( \chi(s)\right)  s^{\frac{1}{2}} \log(s) s^{\frac{1}{2}} \frac{\re(\varepsilon_1)}{s^m} ds \right|^p r\, dr \\
 & \lesssim  \int_0^{\delta} r^{mp+1} \left| \int_r^1 \partial_s\left( \chi(s)\right)
 s^{-m+1- \frac{1}{p}} \log(s) | \varepsilon_1 | s^{\frac{1}{p}} ds 
 \right|^p dr \\
  & \lesssim  \int_0^{\delta} r^{mp+1} \left( \int_r^1| \varepsilon_1 |^p s ds \right) \left( \int_{\frac{1}{2}}^1   \partial_s\left( \chi(s)\right)^{p^{\prime}}
 s^{(-m+1- \frac{1}{p})p^{\prime} } \log(s)^{p^{\prime}} ds \right)^{\frac{p}{p^{\prime}}} \\
 & \lesssim \left\|  \varepsilon_1 \right\|_{L^p( r \leq 1)}^p  \int_0^{\delta} r^{mp+1} dr  \lesssim \left\|  \varepsilon_1 \right\|_{L^p( r \leq 1)}^p
\end{align*}

\begin{align*}
 \int_0^{\delta} I_{1,3}(r) dr &= \int_0^{\delta}   \frac{Q^p(r)}{\sh^\frac{p}{2}(r)} \bigg| \phi_{0}(r) \int_r^1   \chi(s) \partial_s\left( \phi_{\infty}(s) \sh^{-\frac{1}{2}}(s) \right) \sh(s) \frac{\re(\varepsilon_1)}{Q} ds \bigg|^p  \sh(r) dr \\
 & \lesssim  \int_0^{\delta}   \frac{r^{mp}}{r^{\frac{p}{2}}} r^{\frac{p}{2}} \bigg| \int_r^1   \chi(s) \partial_s\left( s^{\frac{1}{2}} \log(s) s^{-\frac{1}{2}}(s) \right) s \frac{\re(\varepsilon_1)}{s^m} ds \bigg|^p   r dr \\   
 & \lesssim  \int_0^{\delta}  r^{mp+1} \bigg| \int_r^1 \chi(s) s^{-m-\frac{1}{p}}  | \varepsilon_1 | s^{\frac{1}{p}} ds \bigg|^2 dr \\
  & \lesssim   \int_0^{\delta}  r^{mp+1} \left(\int_r^1 | \varepsilon_1 |^p s ds  \right)  \left(\int_r^1  \chi(s) s^{(-m-\frac{1}{p})p^{\prime}} ds \right)^{\frac{p}{p^{\prime}}} \\
  & \lesssim \left\|  \varepsilon_1 \right\|_{L^p( r \leq 1)}^p \int_0^{\delta}  r^{mp+1}  \; r^{(-m-\frac{1}{p})p+\frac{p}{p^{\prime}}} dr \\
   & \lesssim \left\|  \varepsilon_1 \right\|_{L^p( r \leq 1)}^p \int_0^{\delta}  r^{mp+1}  \; r^{-mp-1+\frac{p}{p^{\prime}}} dr \\ 
  & \lesssim \left\|  \varepsilon_1 \right\|_{L^p( r \leq 1)}^p \int_0^{\delta}  r^{\frac{p}{p^{\prime}}} dr  \lesssim \left\|  \varepsilon_1 \right\|_{L^p( r \leq 1)}^p
\end{align*}

\begin{align*}
 \int_0^{\delta} I_{1,4}(r) dr :&= \int_0^{\delta}   \frac{Q^p(r)}{\sh^\frac{p}{2}(r)}    \bigg| \phi_{0}(r) \int_{\frac{1}{2}}^{\infty}\Tilde{\chi}(s)  \phi_{\infty}(s) \frac{1}{\sh^{\frac{1}{2}}(s)}  \partial_s\left(\sh(s) \frac{\re(\varepsilon_1)}{Q} \right) ds \bigg|^p  \sh(r) dr \\
 & \lesssim  \int_0^{\delta}   \frac{Q^p(r)}{\sh^\frac{p}{2}(r)}    \bigg| \phi_{0}(r)  \left[ \Tilde{\chi}(s)  \phi_{\infty}(s)  \sh^{\frac{1}{2}}(s) \frac{\re(\varepsilon_1)}{Q} \right]_{s=\frac{1}{2}}^{s=\infty}  \\
 &\qquad \qquad \qquad - \phi_{0}(r) \int_{\frac{1}{2}}^{\infty} \partial_s\left( \Tilde{\chi}(s)  \phi_{\infty}(s) \frac{1}{\sh^{\frac{1}{2}}(s)}  \right)\sh(s) \frac{\re(\varepsilon_1)}{Q}  ds \bigg|^p  \sh(r) dr 
\end{align*}
Using the fact that $\Tilde{\chi}(r) = 0$ for $r\leq \frac{1}{2}$ and  $\phi_{\infty} (s) \sim   e^{-\frac{\sqrt{5}}{2} s}$ for large $r$, we obtain
$$ \left[ \Tilde{\chi}(s)  \phi_{\infty}(s)  \sh^{\frac{1}{2}}(s) \frac{\re(\varepsilon_1)}{Q} \right]_{s=\frac{1}{2}}^{s=\infty} =0.$$
Hence,
\begin{align*}
 \int_0^{\delta} I_{1,4}(r) dr   & \lesssim  \int_0^{\delta} \frac{Q^p(r)}{\sh^\frac{p}{2}(r)}   \bigg| \phi_{0}(r)   \int_{\frac{1}{2}}^{\infty} \partial_s\left( \Tilde{\chi}(s)  \phi_{\infty}(s) \frac{1}{\sh^{\frac{1}{2}}(s)}  \right)\sh(s) \frac{\re(\varepsilon_1)}{Q}  ds \bigg|^p  \sh(r) dr  \\
&\lesssim  \int_0^{\delta} \frac{Q^p(r)}{\sh^\frac{p}{2}(r)}   \bigg| \phi_{0}(r)   \int_{\frac{1}{2}}^{1} \partial_s\left( \Tilde{\chi}(s) \right)   \phi_{\infty}(s)  \sh^{\frac{1}{2}}(s) \frac{\re(\varepsilon_1)}{Q}  ds \bigg|^p  \sh(r) dr  \\
& +  \int_0^{\delta} \frac{Q^p(r)}{\sh^\frac{p}{2}(r)}   \bigg| \phi_{0}(r)   \int_{\frac{1}{2}}^{\infty} \Tilde{\chi}(s)  \partial_s\left( \phi_{\infty}(s) \frac{1}{\sh^{\frac{1}{2}}(s)}  \right)\sh(s) \frac{\re(\varepsilon_1)}{Q}  ds \bigg|^p  \sh(r) dr \\
  & \lesssim \int_0^{\delta}   \frac{r^{mp}}{r^{\frac{p}{2}}} r^{\frac{p}{2}} \bigg| \int_{\frac{1}{2}}^{1} s^{\frac{1}{2}} \log(s) s^{\frac{1}{2}} \frac{\re(\varepsilon_1)}{s^{m}}  ds \bigg|^p  r \, dr
 \\
 & + \int_0^{\delta}   \frac{r^{mp}}{r^{\frac{p}{2}}} r^{\frac{p}{2}} \bigg| \int_{\frac{1}{2}}^{\infty}  e^{-\frac{\sqrt{5}}{2} s } e^{-\frac{1}{2} s } e^{ s } \re(\varepsilon_1)  ds \bigg|^p  r \, dr \\
 & \lesssim \int_0^{\delta}   r^{mp+1} \bigg| \int_{\frac{1}{2}}^{1} s^{-m+1-\frac{1}{p}} \log(s)  \, \re(\varepsilon_1) s^{\frac{1}{p}}  ds \bigg|^p   dr
 \\
 & + \int_0^{\delta}   r^{mp+1} \bigg| \int_{\frac{1}{2}}^{\infty}  e^{(-\frac{\sqrt{5}}{2} +\frac{1}{2}-\frac{1}{p})s }   \re(\varepsilon_1) e^{\frac{1}{p}s} ds \bigg|^p   dr \\
 & \lesssim   \int_0^{\delta}   r^{mp+1} \left( \int_{\frac{1}{2}}^1 | \varepsilon_1|^p s ds \right) \left( \int_{\frac{1}{2}}^1  s^{(-m+1-\frac{1}{p})p^{\prime}} (\log(s))^{p^{\prime}}  ds \right)^{\frac{p}{p^{\prime}}} dr \\
 & + \int_0^{\delta}   r^{mp+1} \left( \int_{\frac{1}{2}}^{\infty} | \varepsilon_1|^p e^s ds \right)  \left(\int_{\frac{1}{2}}^{\infty}   e^{(-\frac{\sqrt{5}}{2} +\frac{1}{2}-\frac{1}{p})p^{\prime} s }   ds \right)^{\frac{p}{p^{\prime}}} dr \\
  & \lesssim \left\|  \varepsilon_1 \right\|_{L^p( r \leq 1)}^p +     \left\|  \varepsilon_1 \right\|_{L^p( r \geq 1)}^p 
\end{align*}
Then \begin{align*}
\int_0^{\delta}  I_1(r) \sh(r) dr &
\lesssim \left\|  \varepsilon_1 \right\|_{L^p}^p    
\end{align*}

Next we estimate $I_2(r),$
\begin{align*}
 \int_0^{\delta}  I_2(r) \sh(r) dr &\lesssim \int_0^{\delta}     \frac{Q^p(r)}{\sh^\frac{p}{2}(r)}  \bigg(  \bigg| \phi_{\infty}(r) \int_0^r \phi_0(s) \frac{1}{\sh^{\frac{1}{2}}(s)}  \partial_s \left( \sh(s) \frac{a_{\theta}}{\sh(s)} \frac{\re(\varepsilon)}{Q} \right)   ds  \\
& \qquad \qquad \qquad + \phi_{0}(r) \int_r^{\infty} \phi_{\infty}(s) \frac{1}{\sh^{\frac{1}{2}}(s)} \partial_s \left( \sh(s) \frac{a_{\theta}}{\sh(s)} \frac{\re(\varepsilon)}{Q} \right) ds \bigg|^p  \bigg)   \sh(r) dr 
\end{align*}
Similarly to the estimate obtained above for $I_1(r),$ we have 
 \begin{align*}
\int_0^{\delta}  I_2(r) \sh(r) dr &
\lesssim \left\|\frac{a_{\theta}}{\sh(r)} \re(\varepsilon)  \right\|_{L^p}^p     \lesssim  \left\| \frac{a_{\theta}}{\sh(r)} \right\|_{L^\infty}^p   \left\|  \varepsilon \right\|_{L^p}^p   \\
\end{align*}

Next we estimate $I_3(r),$ using the asymptotic behavior \eqref{asymp_phi0} for $\phi_0$  and \eqref{asymp_phi-infty} for $\phi_{\infty}.$ 

\begin{align*}
 \int_0^{\delta}  I_3(r) \sh(r) dr   
   &\lesssim   \int_0^{\delta}  \frac{Q^p(r)}{\sh^\frac{p}{2}(r)}  \bigg(\left| \phi_{\infty}(r)  \int_0^r \phi_0(s) \sh^{\frac{1}{2}}(s) |\varepsilon|^2  ds \right|^p
  \\
  &  \quad \qquad \qquad \qquad+ \frac{Q^p(r)}{\sh^\frac{p}{2}(r)} \left| \phi_{0}(r) \int_r^{1} \phi_{\infty}(s)  \sh^{\frac{1}{2}}(s) |\varepsilon|^2 ds \right|^p \bigg)  \sh(r) dr  \\
&   + \int_0^{\delta}  \frac{Q^p(r)}{\sh^\frac{p}{2}(r)}  \left| \phi_{0}(r) \int_1^{\infty} \phi_{\infty}(s)  \sh^{\frac{1}{2}}(s) |\varepsilon|^2 ds \right|^p   \sh(r) dr\\
  &\lesssim   \int_0^{\delta}   \frac{r^{mp+1}}{r^{\frac{p}{2}}}  r^{\frac{p}{2}} (\log(r))^{p} \left\|\varepsilon \right\|_{L^\infty}^p \left| \int_0^r s^{1-\frac{1}{p}}  |\varepsilon| s^{\frac{1}{p}} ds\right|^p \\
  & +\frac{r^{mp+1}}{r^{\frac{p}{2}}} r^{\frac{p}{2}} \left\|\varepsilon \right\|_{L^\infty}^p \left| \int_r^1 s^{1-\frac{1}{p}} \log(s)  |\varepsilon| s^{\frac{1}{p}} ds\right|^p dr \\
  & + \int_0^{\delta} \frac{r^{mp+1}}{r^{\frac{p}{2}}}  r^{\frac{p}{2}} \left\|\varepsilon \right\|_{L^\infty}^p  \left| \int_1^{\infty} e^{\left(-\frac{\sqrt{5}}{2} + \frac{1}{2}-\frac{1}{p} \right)s}  |\varepsilon| e^{\frac{1}{p}s} ds \right|^p \\
 &\lesssim  \left\|\varepsilon \right\|_{L^\infty}^p  \int_0^{\delta} r^{mp+1} (\log(r))^{p} \left( \int_0^r   |\varepsilon|^p s ds \right) \left(  \int_0^r s^{(1-\frac{1}{p})p^{\prime}} ds  \right)^{\frac{p}{p^{\prime}}} dr \\
 &+  \left\|\varepsilon \right\|_{L^\infty}^p  \int_0^{\delta} r^{mp+1}  \left( \int_0^r   |\varepsilon|^p s \, ds \right) \left( \int_r^1 s^{(1-\frac{1}{p})p^{\prime}} (\log(s))^{p^{\prime}}  ds \right) dr \\
 &+  \left\|\varepsilon \right\|_{L^\infty}^p  \int_0^{\delta} r^{mp+1} 
 \left(\int_1^{\infty}  |\varepsilon| e^{s} ds   \right) \left( \int_1^{\infty} e^{\left(-\frac{\sqrt{5}}{2} + \frac{1}{2}-\frac{1}{p} \right)p^{\prime} s}    ds \right)^{\frac{p}{p^{\prime}}} \\
&\lesssim  \left\|\varepsilon \right\|_{L^\infty}^p  \left\|\varepsilon \right\|_{L^p}^p 
\end{align*}

\textbf{Step 2: Estimate $I_j$ for r large:}
We first estimate $I_1$ for $r \geq \delta,$ for some $\delta >0,$ using the asymptotic \eqref{asymp_phi-infty} for $\phi_{\infty},$ Lemma \ref{Behav_phi_0_small_r}, H\"older's inequality with Schur's Test. 
\begin{align*}
 \int_{\delta}^{\infty}  I_1(r) \sh(r) dr & \lesssim    \int_{\delta}^{\infty}  \frac{Q^p(r)}{\sh^\frac{p}{2}(r)} \bigg(  \bigg| \phi_{\infty}(r) \int_0^r \phi_0(s) F_1(\varepsilon(s), \varepsilon_1(s)) ds \\
 &  \quad \qquad \qquad \qquad+
  \phi_{0}(r) \int_r^{\infty} \phi_{\infty}(s) F_1(\varepsilon(s), \varepsilon_1(s)) ds \bigg|^p  \bigg) \sh(r) dr \\
    & \lesssim    \int_{\delta}^{\infty}    \frac{Q^p(r)}{\sh^\frac{p}{2}(r)}  \bigg(  \bigg| \phi_{\infty}(r) \int_0^r \phi_0(s) \frac{1}{\sh^{\frac{1}{2}}(s)}  \partial_s\left(\sh(s) \frac{\re(\varepsilon_1)}{Q} \right) ds \\
& \qquad \qquad \qquad \qquad  + \phi_{0}(r) \int_{r}^{\infty}  \phi_{\infty}(s) \frac{1}{\sh^{\frac{1}{2}}(s)}  \partial_s\left(\sh(s) \frac{\re(\varepsilon_1)}{Q} \right) ds \bigg|^p  \bigg) \sh(r) dr    \\
& \lesssim \int_{\delta}^{\infty}   \frac{Q^p(r)}{\sh^\frac{p}{2}(r)}  \bigg(  \bigg| \phi_{\infty}(r) \left[  \phi_{0}(s)  \sh^{\frac{1}{2}}(s) \frac{\re(\varepsilon_1)}{Q} \right]_{s=0}^{s=r} \\
&  \quad \qquad \qquad \qquad - \phi_{\infty}(r)  \int_0^r\partial_s\left( \phi_0(s) \frac{1}{\sh^{\frac{1}{2}}(s)} \right) \sh(s) \frac{\re(\varepsilon_1)}{Q}  ds \\
&    \quad \qquad \qquad \qquad  + \phi_{0}(r)\left[  \phi_{\infty}(s)  \sh^{\frac{1}{2}}(s) \frac{\re(\varepsilon_1)}{Q} \right]_{s=r}^{s=\infty} \\
&  \quad \qquad \qquad \qquad-\phi_{0}(r)  \int_{r}^{\infty} \partial_s\left( \phi_{\infty}(s) \frac{1}{\sh^{\frac{1}{2}}(s)} \right) \sh(s) \frac{\re(\varepsilon_1)}{Q}  ds \bigg|^p  \bigg) \sh(r) dr    \\
  & \lesssim    \int_{\delta}^{\infty}    \frac{Q^p(r)}{\sh^\frac{p}{2}(r)}   \bigg| \phi_{\infty}(r) \int_0^1 \partial_s\left( \phi_0(s)   \frac{1}{\sh^{\frac{1}{2}}(s)} \right)  \sh(s) \frac{\re(\varepsilon_1)}{Q} ds  \bigg|^p \sh(r) dr \\
&  + \int_{\delta}^{\infty}   \frac{Q^p(r)}{\sh^\frac{p}{2}(r)}  \bigg( \bigg| \phi_{\infty}(r) \int_1^{r} \partial_s\left( \phi_{0}(s) \frac{1}{\sh^{\frac{1}{2}}(s)}\right)  \sh(s) \frac{\re(\varepsilon_1)}{Q} ds \\
&\qquad \qquad \qquad \quad + \phi_{0}(r) \int_{r}^{\infty} \partial_s\left( \phi_{\infty}(s) \frac{1}{\sh^{\frac{1}{2}}(s)} \right)  \sh(s) \frac{\re(\varepsilon_1)}{Q} ds \bigg|^p  \bigg) \sh(r) dr     \\
&  \lesssim  \int_{\delta}^{\infty}I_{1,5} dr  + \int_{\delta}^{\infty}I_{1,6} dr  
\end{align*}

Using Lemma \eqref{Behav_phi_0_small_r}, we have $\phi_0(s) \sh^{-\frac{1}{2}}(s)=1 + 0(s^{2m+\frac{3}{2}})$, which yields $\partial_s \left( \phi_0(s) \sh^{-\frac{1}{2}}(s)  \right)=O(r^{2m+\frac{1}{2}}).$ Thus we have
\begin{align*}
\int_{\delta}^{\infty}I_{1,5} dr &\lesssim  \int_{\delta}^{\infty}    \frac{Q^p(r)}{\sh^\frac{p}{2}(r)}   \bigg| \phi_{\infty}(r) \int_0^1 \partial_s\left( \phi_0(s)   \frac{1}{\sh^{\frac{1}{2}}(s)} \right)  \sh(s) \frac{\re(\varepsilon_1)}{Q} ds  \bigg|^p \sh(r) dr   \\ 
& \lesssim  \int_{\delta}^{\infty} e^{(-\frac{1}{2}p-\frac{\sqrt{5}}{2}p)  r}  \bigg| \int_0^{1}  s^{2m+\frac{1}{2}} s  \frac{\re(\varepsilon_1)}{s^m} ds \bigg|^p e^r dr \\
& \lesssim  \int_{\delta}^{\infty} e^{(-\frac{1}{2}p-\frac{\sqrt{5}}{2}p)  r}  \bigg| \int_0^{1}  s^{m+\frac{3}{2}-\frac{1}{p}}  |\re(\varepsilon_1) | s^{\frac{1}{p}} ds \bigg|^p e^r dr \\
& \lesssim  \int_{\delta}^{\infty} e^{(-\frac{1}{2}p-\frac{\sqrt{5}}{2}p+1)  r}  \left( \int_0^{1} | \varepsilon_1 |^p s \, ds  \right)
 \left( \int_0^{1} s^{(m+\frac{3}{2}-\frac{1}{p})p^{\prime}} ds  \right)^{\frac{p}{p^{\prime}}} dr \\
 & \lesssim  \left\| \varepsilon_1 \right\|_{L^p}^p   \int_{\delta}^{\infty}  e^{(\frac{1}{2}p-\frac{\sqrt{5}}{2}p)  r} dr \\
  & \lesssim  \left\| \varepsilon_1 \right\|_{L^p}^p  
\end{align*}

\begin{align*}
\int_{\delta}^{\infty}I_{1,6} dr  &\lesssim     \int_{\delta}^{\infty}   \frac{Q^p(r)}{\sh^\frac{p}{2}(r)}  \bigg( \bigg| \phi_{\infty}(r) \int_1^{r} \partial_s\left( \phi_{0}(s) \frac{1}{\sh^{\frac{1}{2}}(s)}\right)  \sh(s) \frac{\re(\varepsilon_1)}{Q} ds \\
&\qquad \qquad \qquad \quad + \phi_{0}(r) \int_{r}^{\infty} \partial_s\left( \phi_{\infty}(s) \frac{1}{\sh^{\frac{1}{2}}(s)} \right)  \sh(s) \frac{\re(\varepsilon_1)}{Q} ds \bigg|^p  \bigg) \sh(r) dr  \\
&\lesssim    \int_{\delta}^{\infty} e^{-\frac{p}{2}   r}  e^r \bigg| e^{-\frac{\sqrt{5}}{2}   r}\int_1^r \partial_s e^{(\frac{\sqrt{5}}{2}-\frac{1}{2})s} e^{s} \re(\varepsilon_1) ds   
+    e^{\frac{\sqrt{5}}{2}   r} \int_r^{\infty} \partial_s e^{(-\frac{\sqrt{5}}{2}-\frac{1}{2})s} e^{s} \re(\varepsilon_1) ds  \bigg|^p dr \\
& \lesssim   \int_{\delta}^{\infty} \bigg|   \int_1^r e^{(-\frac{1}{2}-\frac{\sqrt{5}}{2}+\frac{1}{p})   r} e^{(\frac{\sqrt{5}}{2}+\frac{1}{2}-\frac{1}{p})s}  e^{-\frac{s}{p}}|\varepsilon_1 | e^{\frac{s}{p}} ds  
+   \int_r^{\infty} e^{(-\frac{1}{2}+\frac{\sqrt{5}}{2}+\frac{1}{p})   r}   e^{(-\frac{\sqrt{5}}{2}+\frac{1}{2}-\frac{1}{p} )s}  |\varepsilon_1| e^{\frac{1}{p}s}ds \,   \bigg|^p dr \\
&\lesssim   \int_{\delta}^{\infty} \bigg| \int_1^{\infty} K(s,r)   |\varepsilon_1| e^{\frac{1}{p}s}ds \bigg|^p dr 
\end{align*}

where 
\begin{align*}
  K(s,r)&=   \mathbb{1}_{\{1 \leq s \leq r\}} e^{(-\frac{\sqrt{5}}{2}-\frac{1}{2}+\frac{1}{p})(r-s)}+ \mathbb{1}_{\{\delta \leq r \leq s\}}e^{(-\frac{\sqrt{5}}{2}+\frac{1}{2}-\frac{1}{p}) (s-r)}
\end{align*}

Note that $\left\| K(s,r) \right\|_{L^1(ds)} $ bounded uniformly in $r$ and $\left\| K(s,r) \right\|_{L^1(dr)} $ bounded uniformly in $s.$ Indeed, 

\begin{align*}
 \int_1^{\infty} | K(s,r) | ds&=  \int_1^{\infty} \mathbb{1}_{\{1 \leq s \leq r\}} e^{(-\frac{1}{2}-\frac{\sqrt{5}}{2}+\frac{1}{p})(r-s)}+ \mathbb{1}_{\{\delta \leq r \leq s\}}e^{(-\frac{1}{2}-\frac{\sqrt{5}}{2}+\frac{1}{p}) (s-r)} ds \\
 &= \int_1^r e^{(\frac{1}{2}+\frac{\sqrt{5}}{2}-\frac{1}{p})s} ds \, e^{(-\frac{1}{2}-\frac{\sqrt{5}}{2}+\frac{1}{p})r} + 
 \int_{r}^{\infty} e^{(-\frac{1}{2}-\frac{\sqrt{5}}{2}+\frac{1}{p}) s} ds\,  e^{(\frac{1}{2}+\frac{\sqrt{5}}{2}-\frac{1}{p}) r} \\
 &\lesssim 1
\end{align*}
\begin{align*}
 \int_1^{\infty} | K(s,r)  | dr&=\int_1^{\infty}  \mathbb{1}_{\{1 \leq s \leq r\}} e^{(-\frac{1}{2}-\frac{\sqrt{5}}{2}+\frac{1}{p})(r-s)}+ \mathbb{1}_{\{\delta \leq r \leq s\}}e^{(-\frac{1}{2}-\frac{\sqrt{5}}{2}+\frac{1}{p}) (s-r)} dr \\
 &= \int_1^r e^{(-\frac{1}{2}-\frac{\sqrt{5}}{2}+\frac{1}{p})r} dr \, e^{(\frac{1}{2}+\frac{\sqrt{5}}{2}-\frac{1}{p})s} + 
 \int_{r}^{\infty} e^{(\frac{1}{2}+\frac{\sqrt{5}}{2}-\frac{1}{p}) r} ds\,  e^{(-\frac{1}{2}-\frac{\sqrt{5}}{2}+\frac{1}{p}) s} \\
 &\lesssim 1    
\end{align*}
Then by Schur's Test, we have 

\begin{align*}
\int_{\delta}^{\infty} \bigg| \int_1^{\infty} K(s,r)   |\varepsilon_1| e^{\frac{1}{p}s}ds \bigg|^p dr \lesssim   \int_{\delta}^{\infty} 
|\varepsilon_1|^p e^{s}ds \lesssim \left\| \varepsilon_1 \right\|_{L^p}^p
\end{align*}
which yields, 
\begin{align*}
\int_{\delta}^{\infty}I_{1,6} dr \lesssim \left\| \varepsilon_1 \right\|_{L^p}^p 
\end{align*}
and \begin{align*}
     \int_{\delta}^{\infty}  I_1(r) \sh(r) dr & \lesssim \left\| \varepsilon_1 \right\|_{L^p}^p 
\end{align*}
Similarly to the estimate obtained above for $I_1(r),$ we have 
 \begin{align*}
\int_{\delta}^{\infty}  I_2(r) \sh(r) dr &
\lesssim \left\|\frac{a_{\theta}}{\sh(r)} \re(\varepsilon)  \right\|_{L^p}^p     \lesssim  \left\| \frac{a_{\theta}}{\sh(r)} \right\|_{L^\infty}^p   \left\|  \varepsilon \right\|_{L^p}^p  
\end{align*}

Next, we estimate $I_3:$ 
\begin{align*}
\int_{\delta}^{\infty}  I_3(r) \sh(r) dr   
   &\lesssim  \int_{\delta}^{\infty} \frac{Q^p(r)}{\sh^\frac{p}{2}(r)}  \bigg(\left| \phi_{\infty}(r)  \int_0^1 \phi_0(s) \sh^{\frac{1}{2}}(s) |\varepsilon|^2  ds \right|^p  \\
&    \quad \qquad \qquad \qquad   + \frac{Q^p(r)}{\sh^\frac{p}{2}(r)} \left| \phi_{\infty}(r) \int_1^{r} \phi_{0}(s)  \sh^{\frac{1}{2}}(s) |\varepsilon|^2 ds \right|^p \bigg)  \sh(r) dr  \\
&   +\int_{\delta}^{\infty} \frac{Q^p(r)}{\sh^\frac{p}{2}(r)}  \left| \phi_{0}(r) \int_r^{\infty} \phi_{\infty}(s)  \sh^{\frac{1}{2}}(s) |\varepsilon|^2 ds \right|^p   \sh(r) dr\\
&\lesssim \left\| \varepsilon \right\|_{L^{\infty}}^p \int_{\delta}^{\infty} e^{(-\frac{\sqrt{5}}{2} -\frac{1}{2}) pr} \bigg| \int_0^{1} s^{1-\frac{1}{p}} |\varepsilon| s^{\frac{1}{p}} ds  \bigg|^p e^r dr \\
&+ \left\| \varepsilon \right\|_{L^{\infty}}^p 
\int_{\delta}^{\infty}   \bigg| e^{(-\frac{\sqrt{5}}{2} -\frac{1}{2}+\frac{1}{p}) r} \int_1^r e^{(\frac{\sqrt{5}}{2} + \frac{1}{2})s} |\varepsilon| ds +      e^{(\frac{\sqrt{5}}{2}-\frac{1}{2}+\frac{1}{p})r } \int_r^{\infty} e^{(-\frac{\sqrt{5}}{2}+\frac{1}{2})s }   |\varepsilon| ds \bigg|^p   dr\\
&\lesssim \left\| \varepsilon \right\|_{L^{\infty}}^p \int_{\delta}^{\infty} e^{(-\frac{\sqrt{5}}{2} -\frac{1}{2}) pr} \left( \int_0^1 |\varepsilon|^p s \, ds \right) \left(\int_0^1 s^{(1-\frac{1}{p})p^{\prime}}  ds \right)^{\frac{p}{p^{\prime}}} \\
&+  \left\| \varepsilon \right\|_{L^{\infty}}^p 
\int_{\delta}^{\infty}   \bigg|  \int_1^r e^{(-\frac{\sqrt{5}}{2} -\frac{1}{2}+\frac{1}{p}) (r-s)}  |\varepsilon| e^{\frac{1}{p}s} ds +       \int_r^{\infty} e^{(-\frac{\sqrt{5}}{2} +\frac{1}{2}-\frac{1}{p}) (s-r)}   |\varepsilon| e^{\frac{1}{p}s} ds \bigg|^p   dr \\
&\lesssim \left\| \varepsilon \right\|_{L^{\infty}}^p  \left\|  \varepsilon \right\|_{L^{p}}^p +  \left\| \varepsilon \right\|_{L^{\infty}}^p 
\int_{\delta}^{\infty}   \bigg| \int_1^{\infty} K(s,r)   |\varepsilon| e^{\frac{1}{p}s} ds \bigg|^p   dr 
\end{align*}
where 
\begin{align*}
  K(s,r)&=   \mathbb{1}_{\{1 \leq s \leq r\}} e^{(-\frac{\sqrt{5}}{2}-\frac{1}{2}+\frac{1}{p})(r-s)}+ \mathbb{1}_{\{\delta \leq r \leq s\}}e^{(-\frac{\sqrt{5}}{2}+\frac{1}{2}-\frac{1}{p}) (s-r)}
\end{align*}
and  $\left\| K(s,r) \right\|_{L^1(ds)} $ bounded uniformly in $r$ and $\left\| K(s,r) \right\|_{L^1(dr)} $ bounded uniformly in $s.$ By Schur's test,  we have 
\begin{align*}
\int_{\delta}^{\infty} \bigg| \int_1^{\infty} K(s,r)   |\varepsilon|^p e^{\frac{1}{p}s}ds \bigg|^p dr \lesssim   \int_{\delta}^{\infty} 
|\varepsilon|^p  e^{s}ds \lesssim \left\| \varepsilon \right\|_{L^p}^p
\end{align*}
which yields, 

\begin{align*}
\int_{\delta}^{\infty}  I_3(r) \sh(r) dr   &\lesssim \left\| \varepsilon \right\|_{L^{\infty}}^p  \left\|  \varepsilon \right\|_{L^{p}}^p
\end{align*}
This concludes the proof of \eqref{estim-real-eps}. Next we estimate the $L^\infty$-norm of the real part of $\varepsilon$ in terms of $\varepsilon_1.$ Recall that, by \eqref{est-real-eps-p-norm} for $p=1$, we have

\begin{align*}
\left| \re(\varepsilon(r)) \right|
\lesssim |J_1(r)|+ |J_2(r)|+|J_3(r)|
\end{align*}
where 
 \begin{align*}
 J_1(r):&=     \frac{Q(r)}{\sh^\frac{1}{2}(r)}  \left| \phi_{\infty}(r) \int_0^r \phi_0(s) F_1(\varepsilon(s), \varepsilon_1(s)) ds + \phi_{0}(r) \int_r^{\infty} \phi_{\infty}(s) F_1(\varepsilon(s), \varepsilon_1(s)) ds \right| \\
J_2(r):&=   \frac{Q(r)}{\sh^\frac{1}{2}(r)}  \left| \phi_{\infty}(r) \int_0^r \phi_0(s) F_2(\varepsilon(s), \varepsilon_1(s)) ds +   \phi_{0}(r) \int_r^{\infty} \phi_{\infty}(s) F_2(\varepsilon(s), \varepsilon_1(s)) ds \right| \\
J_3(r):&=  \frac{Q(r)}{\sh^\frac{1}{2}(r)}  \left| \phi_{\infty}(r) \int_0^r \phi_0(s) F_3(\varepsilon(s), \varepsilon_1(s)) ds 
   +   \phi_{0}(r) \int_r^{\infty} \phi_{\infty}(s) F_3(\varepsilon(s), \varepsilon_1(s)) ds \right| 
\end{align*}
and $F_i(\varepsilon, \varepsilon_1)$ is defined by \eqref{def-F-1}, \eqref{def-F-2} and \eqref{def-F-3} for $i = 1, 2, 3$. \\
\begin{align*}
|J_1(r)|&=     \frac{Q(r)}{\sh^\frac{1}{2}(r)}  \bigg| \phi_{\infty}(r) \int_0^r \phi_0(s) \frac{1}{\sh^{\frac{1}{2}}(s)}  \partial_s \left(\sh(s) \frac{\re(\varepsilon_1)}{Q} \right) ds \\
&    \quad \qquad \qquad + \phi_{0}(r) \int_r^{\infty} \phi_{\infty}(s) \frac{1}{\sh^{\frac{1}{2}}(s)}  \partial_s \left( \sh(s) \frac{\re(\varepsilon_1)}{Q} \right) ds \bigg| \\ 
&  \lesssim \frac{Q(r)}{\sh^\frac{1}{2}(r)}  \bigg( \bigg| \phi_{\infty}(r) \left[  \phi_{0}(s)  \sh^{\frac{1}{2}}(s) \frac{\re(\varepsilon_1)}{Q} \right]_{s=0}^{s=r}
- \phi_{\infty}(r)  \int_0^r\partial_s\left( \phi_0(s) \frac{1}{\sh^{\frac{1}{2}}(s)} \right) \sh(s) \frac{\re(\varepsilon_1)}{Q}  ds
 \\
& \qquad \qquad \; +    \phi_{0}(r)  \left[  \phi_{\infty}(s)  \sh^{\frac{1}{2}}(s) \frac{\re(\varepsilon_1)}{Q} \right]_{s=r}^{s=\infty} -\phi_{0}(r)  \int_{r}^{\infty} \partial_s\left( \phi_{\infty}(s) \frac{1}{\sh^{\frac{1}{2}}(s)} \right) \sh(s) \frac{\re(\varepsilon_1)}{Q(s)}  ds \bigg| \bigg)
\end{align*}
Using Lemma \eqref{Behav_phi_0_small_r}, we have $\phi_0(s) \sh^{-\frac{1}{2}}(s)=1 + 0(s^{2m+\frac{3}{2}})$
, which yields $\partial_s \left( \phi_0(s) \sh^{-\frac{1}{2}}(s)  \right)=O(r^{2m+\frac{1}{2}})$

For small $r$, we have 
\begin{align*}
|J_1(r)| & \lesssim r^{m-\frac{1}{2}}  \bigg|  
\phi_{\infty}(r)  \int_0^r \partial_s\left( \phi_0(s) \frac{1}{\sh^{\frac{1}{2}}(s)} \right) \sh(s) \frac{\re(\varepsilon_1)}{Q(s)}  ds
\\
& \qquad \qquad + \phi_{0}(r)  \int_{r}^{1} \partial_s\left( \phi_{\infty}(s) \frac{1}{\sh^{\frac{1}{2}}(s)} \right) \sh(s) \frac{\re(\varepsilon_1)}{Q(s)}  ds \bigg|  \\
   & + r^{m-\frac{1}{2}} \bigg|  \phi_{0}(r) \int_1^{\infty} \partial_s\left( \phi_{\infty}(s) \frac{1}{\sh^{\frac{1}{2}}(s)} \right) \sh(s) \frac{\re(\varepsilon_1)}{Q(s)}  ds \bigg|  \\
& \lesssim r^{m} |\log(r)| \bigg| \int_0^r s^{m+\frac{3}{2}-\frac{1}{p_1}} \re(\varepsilon_1) s^{\frac{1}{p_1}} ds \bigg| +  r^{m} \bigg| \int_r^1 s^{-m-\frac{1}{p_1}} \re(\varepsilon_1) s^{\frac{1}{p_1}} ds \bigg| 
\\ &+  r^{m} \bigg| \int_1^{\infty} e^{(-\frac{\sqrt{5}}{2}+\frac{1}{2}-\frac{1}{p_1}) s }  \re(\varepsilon_1) e^{\frac{1}{p_1} s } ds  \\
& \lesssim r^{m} |\log(r)|  \left( \int_0^r |\varepsilon_1|^{p_1} s ds \right)^{\frac{1}{p_1}}   \left( \int_0^r s^{(m+\frac{3}{2} -\frac{1}{p_1})p_1^{\prime}} ds  \right)^{\frac{1}{p_1^{\prime}}}\\
&+ r^{m} \left( \int_r^1 | \varepsilon_1|^{p_1} e^s ds \right)^{\frac{1}{p_1}} \left( \int_r^1 s^{(-m -\frac{1}{p_1})p_1^{\prime}} ds  \right)^{\frac{1}{p_1^{\prime}}} \\
& + r^m \left( \int_1^{\infty}   |\varepsilon_1|^{p_1} e^s   ds \right)^{\frac{1}{p_1}}
\left( \int_r^{\infty}  e^{(-\frac{\sqrt{5}}{2}+\frac{1}{2}-\frac{1}{p_1}) p_1^{\prime}s }    \right)^{\frac{1}{p_1^{\prime}}} \\
& \lesssim \left\| \varepsilon_1 \right\|_{L^{p_1}}
\end{align*}

For large $r$, we have  
\begin{align*}
|J_1(r)| & \lesssim e^{-\frac{1}{2}r}  \bigg|  
\phi_{\infty}(r)  \int_0^1 \partial_s\left( \phi_0(s) \frac{1}{\sh^{\frac{1}{2}}(s)} \right) \sh(s) \frac{\re(\varepsilon_1)}{Q(s)}  ds \\
& \qquad  \qquad  + \phi_{\infty}(r)  \int_1^r \partial_s\left( \phi_0(s) \frac{1}{\sh^{\frac{1}{2}}(s)} \right) \sh(s) \frac{\re(\varepsilon_1)}{Q(s)}  ds \\
  & \qquad \qquad \quad + \phi_{0}(r)  \int_{r}^{\infty} \partial_s\left( \phi_{\infty}(s) \frac{1}{\sh^{\frac{1}{2}}(s)} \right) \sh(s) \frac{\re(\varepsilon_1)}{Q(s)}  ds \bigg|  \\
& \lesssim e^{-(\frac{1}{2}+\frac{\sqrt{5}}{2})r} \bigg| \int_0^1 s^{m+\frac{3}{2}-\frac{1}{p_1}} \re(\varepsilon_1) s^{\frac{1}{p_1}} ds \bigg| +  e^{-(\frac{1}{2}+\frac{\sqrt{5}}{2})r} \bigg| \int_1^r e^{(\frac{\sqrt{5}}{2}+\frac{1}{2}-\frac{1}{p_1})s} \re(\varepsilon_1) e^{\frac{1}{p_1}s} \, ds \bigg| \\
& + e^{-(\frac{1}{2}-\frac{\sqrt{5}}{2})r} \bigg| \int_r^{\infty} e^{(-\frac{\sqrt{5}}{2}+\frac{1}{2}-\frac{1}{p_1}) s }  \re(\varepsilon_1) e^{\frac{1}{p_1} s } ds  \\
& \lesssim e^{-(\frac{1}{2}+\frac{\sqrt{5}}{2})r} \left( \int_0^1 |\varepsilon_1|^{p_1} s ds \right)^{\frac{1}{p_1}}   \left( \int_0^1 s^{(m+\frac{3}{2} -\frac{1}{p_1})p_1^{\prime}} ds  \right)^{\frac{1}{p_1^{\prime}}} \\
&+ e^{-(\frac{1}{2}+\frac{\sqrt{5}}{2})r} \left( \int_1^r | \varepsilon_1|^{p_1} e^s ds \right)^{\frac{1}{p_1}} \left( \int_1^r  e^{(\frac{\sqrt{5}}{2}+\frac{1}{2}-\frac{1}{p_1})p_1^{\prime} s}  ds  \right)^{\frac{1}{p_1^{\prime}}} \\
& + e^{-(\frac{1}{2}-\frac{\sqrt{5}}{2})r} \left( \int_r^{\infty}   |\varepsilon_1|^{p_1} e^s   ds \right)^{\frac{1}{p_1}}
\left( \int_r^{\infty}  e^{(-\frac{\sqrt{5}}{2}+\frac{1}{2}-\frac{1}{p_1}) p_1^{\prime}s }    \right)^{\frac{1}{p_1^{\prime}}} \\
& \lesssim \left\| \varepsilon_1 \right\|_{L^{p_1}}
\end{align*}

Similarly, one can estimate $J_2,$
\begin{align*}
|J_2(r) |
& \lesssim \left\| \frac{a_{\theta}}{\sh(s)} \re(\varepsilon) \right\|_{L^{q_1}} \lesssim \left\| \frac{a_{\theta}}{\sh(s)} \right\|_{L^{\infty}}  \left\| \varepsilon \right\|_{L^{q_1}}
\end{align*}
Next, we estimate $J_3.$ 
\begin{align*}
J_3(r) = \frac{Q(r)}{\sh^\frac{1}{2}(r)}  \left| \phi_{\infty}(r) \int_0^r \phi_0(s)  \sh^{\frac{1}{2}}(s) |\varepsilon|^2  ds 
   +   \phi_{0}(r) \int_r^{\infty} \phi_{\infty}(s) \sh^{\frac{1}{2}}(s) |\varepsilon|^2  ds \right| 
\end{align*}

For $r$ small, we have  
\begin{align*}
 |J_3(r)| & \lesssim r^{m-\frac{1}{2}} \bigg| \phi_{\infty}(r) \int_0^r \phi_0(s) \sh^{\frac{1}{2}}(s) |\varepsilon|^2 ds 
 +   \phi_{0}(r) \int_r^{1} \phi_{\infty}(s)  \sh^{\frac{1}{2}}(s) |\varepsilon|^2 ds  \\
 & \qquad  \qquad  +   \phi_{0}(r) \int_1^{\infty} \phi_{\infty}(s)  \sh^{\frac{1}{2}}(s) |\varepsilon|^2 ds \bigg|      \\
& \lesssim  r^{m} |\log(r)|  \int_0^r s |\varepsilon|^2 ds + 
r^{m} \int_r^1 s \log(s) |\varepsilon|^2 ds +  r^{m}   \int_1^{\infty} e^{(-\frac{\sqrt{5}}{2}+\frac{1}{2})s}  |\varepsilon|^2 ds   \\
& \lesssim   \left\| \varepsilon \right\|_{L^{\infty}}^2
\end{align*}
For large $r$ we have 
\begin{align*}
  |J_3(r)|  & \lesssim e^{-\frac{1}{2} r } \bigg| \phi_{\infty}(r) \int_0^1 \phi_0(s) \sh^{\frac{1}{2}}(s) |\varepsilon|^2 ds +\phi_{\infty}(r) \int_1^r \phi_0(s) \sh^{\frac{1}{2}}(s) |\varepsilon|^2 ds \\
 & \qquad  \qquad  
   +   \phi_{0}(r) \int_r^{\infty} \phi_{\infty}(s)  \sh^{\frac{1}{2}}(s) |\varepsilon|^2 ds \bigg|      \\
& \lesssim e^{-(\frac{1}{2} + \frac{\sqrt{5}}{2}) r } \int_0^1 s |\varepsilon|^2 ds + e^{-(\frac{1}{2} + \frac{\sqrt{5}}{2}) r } \int_1^r e^{(\frac{\sqrt{5}}{2} + \frac{1}{2})r} |\varepsilon|^2 ds 
+ e^{-(\frac{1}{2}-\frac{\sqrt{5}}{2})r}   \int_r^{\infty} e^{(-\frac{\sqrt{5}}{2}+\frac{1}{2})s}  |\varepsilon|^2 ds   \\
& \lesssim   \left\| \varepsilon \right\|_{L^{\infty}}^2
\end{align*}
This concludes the proof of \eqref{real-eps-infty-norm} and Claim \ref{claim-real-eps-norm}.
\end{proof}

Next, we estimate the imaginary part of $\varepsilon$ in terms of $\varepsilon_1$
\begin{claim}
\label{Claim-img-estimate}
Let $2 \leq p,p_1,q_1 < \infty $
\begin{align}
\label{estim-imag-eps}
\left\| \im(\varepsilon) \right\|_{L^p} \lesssim \left\| \varepsilon_1 \right\|_{L^{p}} + \left\| \frac{a_\theta}{\sh(r)} \right\|_{L^{\infty}} \left\|\varepsilon \right\|_{L^{p}} 
\end{align}
\begin{align}
\label{imag-eps-infty-norm}
\left\| \im(\varepsilon) \right\|_{L^\infty} \lesssim \left\| \varepsilon_1 \right\|_{L^{p_1}} + \left\| \frac{a_\theta}{\sh(r)} \right\|_{L^{\infty}} \left\|\varepsilon \right\|_{L^{q_1}} 
\end{align}

\end{claim}

\begin{proof}
Recall that by \eqref{Im-eq_eps}, we have 
\begin{align*}
 \im(\varepsilon(r))=  Q(r) \int_r^{\infty}   \frac{ \im(\varepsilon_1(s))}{Q(s)}  ds - Q(r) \int_r^{\infty}   \frac{1}{Q(s)}  \frac{a_{\theta}(s)}{\sh(s)}  \im(\varepsilon(s)) ds        
\end{align*}

Let $0<\delta<1.$ Then using the asymptotics of $Q$ for small $r<\delta,$ 
we have 
 \begin{align*}
\int_0^{\delta} |\im(\varepsilon(r))|^p     \sh(r) dr & \lesssim 
\int_0^{\delta}  Q^p(r) \bigg|  \int_r^{\infty}   \frac{ \im(\varepsilon_1(s))}{Q(s)}  ds \bigg|^p \sh(r) dr  \\
& \; + \int_0^{\delta}  Q^p(r) \bigg| \int_r^{\infty}   \frac{1}{Q(s)}  \frac{a_{\theta}(s)}{\sh(s)}  \im(\varepsilon(s)) ds \bigg|^p \sh(r) dr \\
& \lesssim 
\int_0^{\delta}  r^{mp+1} \bigg|  \int_r^{1}   s^{-m-\frac{1}{p}} \im(\varepsilon_1(s)) s^{\frac{1}{p}} ds \bigg|^p  dr \\
&+ \int_0^{\delta}  r^{m p+1} \bigg| \int_1^{\infty} e^{-\frac{1}{p}s}  \im(\varepsilon_1(s))e^{\frac{1}{p}s}  ds \bigg|^p  dr \\
& + \left\|\frac{a_{\theta}}{\sh(r)} \right\|_{L^{\infty}}^p   \bigg(  \int_0^{\delta}  r^{mp+1} \bigg| \int_r^{1}   s^{-m-\frac{1}{p}}    \im(\varepsilon(s)) s^{\frac{1}{p}}ds \bigg|^p  dr  \\
&+ 
 \int_0^{\delta}  r^{mp+1}  \bigg| \int_1^{\infty}  e^{-\frac{1}{p}s}    \im(\varepsilon(s)) e^{\frac{1}{p}s} ds \bigg|^p  dr  \bigg) \\
 & \lesssim \left\|\varepsilon_1  \right\|_{L^p}^p \int_0^{\delta} r^{mp+1} \left( \int_r^1 s^{(-m-\frac{1}{p})p^{\prime}} ds \right)^{\frac{p}{p^{\prime}}} dr \\
 & + \left\|\varepsilon_1  \right\|_{L^p}^p \int_0^{\delta} r^{mp+1} \left( \int_1^{\infty} e^{-\frac{p^{\prime}}{p}s} ds \right)^{\frac{p}{p^{\prime}}} dr \\
& + \left\|\frac{a_{\theta}}{\sh(r)} \right\|_{L^{\infty}}^p   \left\|\varepsilon  \right\|_{L^p}^p \bigg(   \int_0^{\delta} r^{mp+1 } \left( \int_r^1 s^{(-m-\frac{1}{p})p^{\prime}} ds 
 \right)^{\frac{p}{p^{\prime}}} dr \bigg)  \\
&+   \left\|\frac{a_{\theta}}{\sh(r)} \right\|_{L^{\infty}}^p   \left\|\varepsilon  \right\|_{L^p}^p \bigg( \int_0^{\delta} r^{mp+1} \left( \int_1^{\infty} e^{-\frac{p^{\prime}}{p} s}   ds\right)^{\frac{p}{p^{\prime}}}  dr  \bigg) \\
 &\lesssim  \left\|\varepsilon_1  \right\|_{L^p}^p  + \left\|\frac{a_{\theta}}{\sh(r)} \right\|_{L^{\infty}}^p   \left\|\varepsilon  \right\|_{L^p}^p
 \end{align*}  

Similarly, using the asymptotics of $Q$ for large $r>\delta,$ for some $\delta >1,$ we have 

\begin{align*}
\int_{\delta}^{\infty} |\im(\varepsilon(r))|^p     \sh(r) dr & \lesssim 
\int_{\delta}^{\infty}  Q^p(r) \bigg|  \int_r^{\infty}   \frac{ \im(\varepsilon_1(s))}{Q(s)}  ds \bigg|^p \sh(r) dr  \\
&+ \int_{\delta}^{\infty} Q^p(r) \bigg| \int_r^{\infty}   \frac{1}{Q(s)}  \frac{a_{\theta}(s)}{\sh(s)}  \im(\varepsilon(s)) ds \bigg|^p \sh(r) dr \\
& \lesssim 
\int_{\delta}^{\infty}   \bigg|  \int_r^{\infty}   e^{-\frac{1}{p}s} \im(\varepsilon_1(s)) e^{\frac{1}{p}s} ds \bigg|^p e^r dr  \\& + \left\|\frac{a_{\theta}}{\sh(r)} \right\|_{L^{\infty}}^p  \int_{\delta}^{\infty} \bigg| \int_r^{\infty}   e^{-\frac{1}{p}s}  \im(\varepsilon(s)) e^{\frac{1}{p}s} ds \bigg|^p e^r dr \\
& \lesssim 
\int_{\delta}^{\infty}   \bigg|  \int_0 ^{\infty}   \mathbb{1}_{ \{r \leq s \}} e^{(r-s)\frac{1}{p}}  (\im(\varepsilon_1(s)) e^{\frac{1}{p}s}) ds \bigg|^p  dr  \\
& + \left\|\frac{a_{\theta}}{\sh(r)} \right\|_{L^{\infty}}^p  \int_{\delta}^{\infty} \bigg| \int_0^{\infty}   \mathbb{1}_{ \{r \leq s \}} e^{(r-s)\frac{1}{p}}  \im(\varepsilon(s)) e^{\frac{1}{p}s} ds \bigg|^p dr \\
& \lesssim 
\int_{\delta}^{\infty}   \bigg|  \int_0 ^{\infty}   K(s,r) (\im(\varepsilon_1(s)) e^{\frac{1}{p}s}) ds \bigg|^p  dr  \\
& + \left\|\frac{a_{\theta}}{\sh(r)} \right\|_{L^{\infty}}^p  \int_{\delta}^{\infty} \bigg| \int_0^{\infty}   K(s,r)  \im(\varepsilon(s)) e^{\frac{1}{p}s} ds \bigg|^p dr 
\end{align*}
where 
\begin{align*}
 K(s,r)=   \mathbb{1}_{ \{r \leq s \}} e^{(r-s)\frac{1}{p}}
\end{align*}
Note that $\left\| K(s,r) \right\|_{L^1(ds)} $ bounded uniformly in $r$ and $\left\| K(s,r) \right\|_{L^1(dr)} $ bounded uniformly in $s.$
Then by Schur's test, we have

\begin{align*}
\int_{\delta}^{\infty} \bigg| \int_0^{\infty} K(s,r)  (\im(\varepsilon_1(s)) e^{\frac{1}{p}s}) \bigg|^p dr \lesssim   \int_{\delta}^{\infty} 
|\varepsilon_1|^p e^{s}ds \lesssim \left\| \varepsilon_1 \right\|_{L^p}^p 
\end{align*}
and 
\begin{align*}
\int_{\delta}^{\infty} \bigg| \int_0^{\infty} K(s,r)  (\im(\varepsilon (s)) e^{\frac{1}{p}s}) \bigg|^p dr \lesssim   \int_{\delta}^{\infty} 
|\varepsilon_1|^p e^{s}ds \lesssim \left\| \varepsilon \right\|_{L^p}^p 
\end{align*}
which yields, 
 \begin{align*}
\int_{\delta}^{\infty} |\im(\varepsilon(r))|^p   \sh(r) dr \lesssim   \left\| \varepsilon_1 \right\|_{L^p}^p + \left\| \frac{a_\theta}{\sh(r)} \right\|_{L^{\infty}}^p  \left\|\varepsilon \right\|_{L^{p}}^p 
\end{align*}
This concludes the proof of \eqref{estim-imag-eps}. Next, we estimate the $L^{\infty}$-norm of the imaginary part of $\varepsilon$ in terms of $\varepsilon_1.$ By \eqref{Im-eq_eps}, we have
\begin{align*}
 \im(\varepsilon(r))=  Q(r) \int_r^{\infty}   \frac{ \im(\varepsilon_1(s))}{Q(s)}  ds - Q(r) \int_r^{\infty}   \frac{1}{Q(s)}  \frac{a_{\theta}(s)}{\sh(s)}  \im(\varepsilon(s)) ds        
\end{align*}

For $r$ small, we have
\begin{align*}
 |\im(\varepsilon(r))| &\lesssim \bigg| Q(r) \int_r^{\infty}   \frac{ \im(\varepsilon_1(s))}{Q(s)}  ds - Q(r) \int_r^{\infty}   \frac{1}{Q(s)}  \frac{a_{\theta}(s)}{\sh(s)}  \im(\varepsilon(s)) ds    \bigg| \\
 & \lesssim r^m \bigg| \int_r^{1}  s^{-m} \im(\varepsilon_1(s))  ds + 
  \int_{1}^{\infty}   \im(\varepsilon_1(s))  ds
 \bigg| \\
 &+ r^m \bigg| \int_r^1 s^{-m} \frac{a_{\theta}(s)}{\sh(s)} \im(\varepsilon(s)) ds + \int_1^{\infty}  \frac{a_{\theta}(s)}{\sh(s)} \im(\varepsilon(s)) ds \bigg| \\
 & \lesssim r^m \left( \int_r^1 |\varepsilon_1|^{p_1} s\, ds  \right)^{\frac{1}{p_1}} \left( \int_r^1 s^{(-m-\frac{1}{p_1})p_1^{\prime}} ds   \right)^{\frac{1}{p_1^{\prime}}} \\
 &+ r^m \left( \int_1^\infty |\varepsilon_1|^{p_1} e^s ds \right)^{\frac{1}{p_1}} \left( \int_1^{\infty} e^{-\frac{p_1^{\prime}}{p_1} s } ds\right)^{\frac{1}{p_1^{\prime}}}  \\
 & + r^m \left\|  \frac{a_{\theta}}{\sh(r)} \right\|_{L^\infty} \left( \int_r^1 |\varepsilon(s) |^{q_1} s \, ds \right)^{\frac{1}{q_1}} \left( \int_r^1 s^{(-m-\frac{1}{q_1})q_1^{\prime}} ds \right)^{\frac{1}{q_1^{\prime}}} \\
 &+ r^m \left\|  \frac{a_{\theta}}{\sh(r)} \right\|_{L^\infty} \left( \int_1^\infty |\varepsilon(s)|^{q_1} e^s ds \right)^{\frac{1}{q_1}} \left( \int_1^{\infty} e^{-\frac{q_1^{\prime}}{q_1} s } ds\right)^{\frac{1}{q_1^{\prime}}} \\
 & \lesssim \left\| \varepsilon_1 \right\|_{L^{p_1}} + \left\|  \frac{a_{\theta}}{\sh(r)} \right\|_{L^\infty} \left\| \varepsilon \right\|_{L^{q_1}}
\end{align*}
For $r$ large we have, 
\begin{align*}
 |\im(\varepsilon(r))| &\lesssim \bigg| Q(r) \int_r^{\infty}   \frac{ \im(\varepsilon_1(s))}{Q(s)}  ds - Q(r) \int_r^{\infty}   \frac{1}{Q(s)}  \frac{a_{\theta}(s)}{\sh(s)}  \im(\varepsilon(s)) ds    \bigg| \\
 & \lesssim \bigg| \int_r^{\infty} e^{-\frac{1}{p_1} s} \im(\varepsilon_1(s)) e^{\frac{1}{p_1} s} ds + \int_r^{\infty} \frac{a_{\theta}(s)}{\sh(s)} e^{-\frac{1}{p_1} s}  \im(\varepsilon(s))e^{\frac{1}{p_1} s}   ds \\
  & \lesssim   \left( \int_r^{\infty} |\varepsilon_1(s)|^{p_1} e^{ s} ds  \right)^{\frac{1}{p_1}} \left( \int_r^{\infty} e^{-\frac{p_1^{\prime}}{p_1} s}  ds\right)^{\frac{1}{p_1^{\prime}}} \\
  &+ \left\|  \frac{a_{\theta}}{\sh(r)} \right\|_{L^\infty} \left( \int_r^{\infty} |\varepsilon (s)|^{q_1} e^{ s} ds  \right)^{\frac{1}{q_1}} \left( \int_r^{\infty} e^{-\frac{q_1^{\prime}}{q_1} s} ds \right)^{\frac{1}{q_1^{\prime}}} \\
  & \lesssim \left\|\varepsilon_1 \right\|_{L^{p_1}} +\left\|  \frac{a_{\theta}}{\sh(r)} \right\|_{L^\infty} \left\|\varepsilon  \right\|_{L^{q_1}}
\end{align*}

This concludes the proof of Claim \ref{Claim-img-estimate}.
\end{proof}
 \end{proof}
\begin{lemma}
\label{claim-estim_a_theta/sinh}
Let $2\leq p \leq \infty,$ then we have
 \begin{align*}
  \left\| \frac{a_\theta}{\sh(r)} \right\|_{L^{p}} & \lesssim (1+\|\varepsilon\|_{L^\infty})(\left\| \varepsilon \right\|_{L^{\infty}}  + \left\| \varepsilon \right\|_{L^p}).
 \end{align*}  
\end{lemma}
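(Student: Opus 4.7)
The starting point is the explicit representation~\eqref{equ-integral-a_theta/sinh}, which I rewrite as
\[
\frac{a_\theta(r)}{\sh r}= -\tfrac12\,T(|\varepsilon|^2)(r) - T(Q\re\varepsilon)(r),\qquad Tf(r):=\frac{1}{\sh r}\int_0^r f(s)\,\sh(s)\,ds.
\]
Since $Q$ is bounded and pointwise $\bigl||\varepsilon|^2\bigr|\le\|\varepsilon\|_{L^\infty}|\varepsilon|$, the lemma reduces at once to the uniform boundedness of the averaging operator $T:L^p(\sh r\,dr)\to L^p(\sh r\,dr)$ for $2\le p\le\infty$. Granting this, one obtains $\|\frac{a_\theta}{\sh r}\|_{L^p}\lesssim \|\varepsilon\|_{L^p}+\|\varepsilon\|_{L^\infty}\|\varepsilon\|_{L^p}=(1+\|\varepsilon\|_{L^\infty})\|\varepsilon\|_{L^p}$, which is at least as strong as the stated bound.

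For $p=\infty$ the boundedness is immediate from the identity
\[
\frac{1}{\sh r}\int_0^r\sh(s)\,ds=\frac{\ch r-1}{\sh r}=\tha(r/2)\le 1,
\]
giving $\|T\|_{L^\infty\to L^\infty}\le 1$. For $p<\infty$ the plan is to apply Schur's test to the kernel $K(r,s)=\frac{1}{\sh r}\mathbf{1}_{s\le r}$, taken with respect to the measure $d\mu(s)=\sh(s)\,ds$, using the weight $h(r)=e^{-\alpha r}$ for any fixed $\alpha\in(0,1)$. The two Schur conditions $\int K(r,s)h(s)\,d\mu(s)\le A h(r)$ and $\int K(r,s)h(r)\,d\mu(r)\le B h(s)$ reduce to
\[
\int_0^r e^{-\alpha s}\sh(s)\,ds\le \tfrac{1}{1-\alpha}\,e^{-\alpha r}\sh r\qquad\text{and}\qquad \int_s^\infty e^{-\alpha r}\,dr=\alpha^{-1}e^{-\alpha s}.
\]
The second identity is trivial; the first follows from the explicit evaluation $\int_0^r e^{-\alpha s}\sh(s)\,ds=\frac{e^{(1-\alpha)r}-1}{2(1-\alpha)}-\frac{1-e^{-(1+\alpha)r}}{2(1+\alpha)}$ by comparison with $e^{-\alpha r}\sh r\sim e^{(1-\alpha)r}/2$ at infinity and $\sim r$ near zero. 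Schur's test then yields $\|T\|_{L^p\to L^p}\le A^{1/p'}B^{1/p}<\infty$.

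The only mildly delicate point is the large-$r$ check of the first Schur condition, where the restriction $\alpha<1$ is essential so that the growing exponential $e^{(1-\alpha)r}$ produced by the integral is balanced by the decay $1/\sh r\sim 2e^{-r}$ of the prefactor, leaving the correct $e^{-\alpha r}$ behavior matching $h(r)$. No further obstacle is anticipated; the proof concludes by inserting $f=|\varepsilon|^2$ and $f=Q\re\varepsilon$ into the $L^p$-bound for $T$ and using the pointwise estimates $|Q\re\varepsilon|\lesssim|\varepsilon|$ and $\bigl||\varepsilon|^2\bigr|\le\|\varepsilon\|_{L^\infty}|\varepsilon|$.
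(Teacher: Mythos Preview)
Your proposal is correct and, in fact, slightly cleaner than the paper's argument. One caveat: the weighted Schur test with a single weight $h$, as you state it, yields only $L^2$ boundedness; the conclusion $\|T\|_{L^p\to L^p}\le A^{1/p'}B^{1/p}$ for general $p$ is not the standard statement. This is easily repaired in your setting, however: either interpolate between your $L^\infty$ bound and the $L^2$ bound coming from Schur, or observe that your second Schur integral $\int_s^\infty e^{-\alpha r}\,dr=\alpha^{-1}e^{-\alpha s}$ holds for any positive exponent, so the $L^p$-adapted condition $\int K(r,s)h(r)^{p/p'}\,d\mu(r)\le B\,h(s)^{p/p'}$ is also satisfied and the H\"older-based $L^p$ Schur argument goes through. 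With this small fix your proof is complete, and the resulting bound $(1+\|\varepsilon\|_{L^\infty})\|\varepsilon\|_{L^p}$ is even slightly sharper than the stated lemma.

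The paper takes a different, more hands-on route. It obtains the $L^\infty$ bound exactly as you do, then for $p<\infty$ splits into a small-$r$ region (handled trivially by the $L^\infty$ bound since $\int_0^\delta\sh r\,dr<\infty$) and a large-$r$ region. For large $r$ it further splits the inner integral at $s=1$, inserts the exponential asymptotics of $\sh$ directly, and applies the \emph{unweighted} Schur test to the explicit convolution-type kernel $K(s,r)=\mathbf{1}_{\{s\le r\}}e^{-(r-s)(1-1/p)}$. Your operator-theoretic reduction to the boundedness of $T$ avoids all the region splitting and gives a unified argument; the paper's approach trades this elegance for complete explicitness and for using only the simplest form of Schur's test.
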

\begin{proof}
Recall that by \eqref{equ-integral-a_theta/sinh}, we have 
\begin{align*}
\frac{a_{\theta}(r) }{\sh(r)} =  - \frac{1}{2\sh(r)} \int_0^r |\varepsilon|^2 \sh(s) ds  - \frac{1}{\sh(r)}\int_0^r Q \re(\varepsilon) \sh(s) ds 
\end{align*}

\begin{align*}
\left| \frac{a_{\theta}(r) }{\sh(r)} \right| & \lesssim \frac{1}{\sh(r) } \left\| \varepsilon \right\|_{L^{\infty}}^2   \int_0^r  \sh(s) ds + \frac{1}{\sh(r) } \left\| \varepsilon \right\|_{L^{\infty}}  \int_0^r  \sh(s) ds \\
& \lesssim  \left\| \varepsilon \right\|_{L^{\infty}}^2 + \left\| \varepsilon \right\|_{L^{\infty}}
\end{align*} 

Thus, we obtain 
\begin{align*}
    \left\| \frac{a_\theta}{\sh(r)} \right\|_{L^{\infty}} & \lesssim \left\| \varepsilon \right\|_{L^{\infty}} + \left\| \varepsilon \right\|_{L^{\infty}}^2.
\end{align*}
Next we estimate the $L^p-$norm for $2 \leq p <\infty.$ For small $r$ we have
\begin{align*}
 \int_0^{\delta}    \left| \frac{a_\theta}{\sh(r)} \right|^p \sh(r) dr \lesssim \left\| \frac{a_\theta}{\sh(r)}\right\|^p_{L^{\infty}} \left| 
\int_0^{\delta} \sh(r) dr \right|\lesssim \left\| \varepsilon \right\|_{L^{\infty}}^{p} +\left\| \varepsilon \right\|_{L^{\infty}}^{2p}.
\end{align*} 
Using \eqref{equ-integral-a_theta/sinh}, we obtain the following estimate for large $r$
\begin{align*}
 \int_{\delta}^{\infty}       \left| \frac{a_\theta}{\sh(r)} \right|^p \sh(r) dr & \lesssim  \int_{\delta}^{\infty}  \frac{1}{\sh^p(r)}     \left|  \int_0^1  \left( |\varepsilon|^2 + Q \re(\varepsilon) \right)  \sh(s) ds  \right|^p \sh(r) dr \\
 & +  \int_{\delta}^{\infty}  \frac{1}{\sh^p(r)} \left| \int_1^{r} \left( |\varepsilon|^2 + Q \re(\varepsilon) \right)  \sh(s) ds  \right|^p \sh(r) dr \\
 & \lesssim  \left\| \varepsilon \right\|_{L^{\infty}}^{2p}  \int_{\delta}^{\infty} e^{(1-p) r} \left| \int_0^1 \sh(s)  ds\right|^p  dr  + \left\| \varepsilon \right\|_{L^{\infty}}^{p}  \int_{\delta}^{\infty} e^{(1-p) r} \left| \int_0^1 \sh(s) s^{m}  ds\right|^p dr \\
 & + \left\| \varepsilon  \right\|_{L^{\infty}}^p \int_{\delta}^{\infty} e^{(1-p) r}  \left| \int_1^r e^{(1-\frac{1}{p})s} |\varepsilon | e^{\frac{1}{p}s}  ds\right|^p  dr + \int_{\delta}^{\infty} e^{(1-p) r}  \left| \int_1^r e^{(1-\frac{1}{p})s} |\varepsilon | e^{\frac{1}{p}s}  ds\right|^p  dr \\
 & \lesssim \left\| \varepsilon \right\|_{L^{\infty}}^{2p} + \left\| \varepsilon \right\|_{L^{\infty}}^{p} + \left( \left\| \varepsilon \right\|_{L^{\infty}}^{p} +1 \right)  \int_{\delta}^{\infty} \left| \int_1^r e^{(1-\frac{1}{p})(s-r)}  |\varepsilon | e^{\frac{1}{p}s} ds \right|^p dr \\
 & \lesssim \left\| \varepsilon \right\|_{L^{\infty}}^{2p} + \left\| \varepsilon  \right\|_{L^{\infty}}^p + \left(1+ \left\| \varepsilon \right\|_{L^{\infty}}^p \right) 
 \int_{\delta}^{\infty}   \left| \int_{1}^{\infty} K(s,r) | \varepsilon | e^{\frac{1}{p}s}  ds \right|^p   dr 
\end{align*}

where 
\begin{align*}
 K(s,r)=   \mathbb{1}_{ \{s \leq r \}} e^{-(r-s)(1-\frac{1}{p})}
\end{align*}
Note that $\left\| K(s,r) \right\|_{L^1(ds)} $ bounded uniformly in $r$ and $\left\| K(s,r) \right\|_{L^1(dr)} $ bounded uniformly in $s.$ 
Indeed, 
\begin{align*}
 \int_1^{\infty}   |K(s,r)| ds & \lesssim \int_1^r e^{s(1-\frac{1}{p})} ds
e^{-r(1-\frac{1}{p})} \lesssim 1 \\
 \int_1^{\infty}   |K(s,r)| dr & \lesssim \int_1^r e^{-r(1-\frac{1}{p})} dr  e^{s(1-\frac{1}{p})} \lesssim 1 
\end{align*}

Then by Schur's test, we have
\begin{align*}
    \int_{\delta}^{\infty}  \left| \int_{\delta}^{\infty}  K(s,r) | \varepsilon | e^{\frac{1}{p}s}  ds \right|^p   dr \lesssim   \int_{\delta}^{\infty} 
|\varepsilon_1|^p e^{s}ds \lesssim \left\| \varepsilon \right\|_{L^p}^p
\end{align*}
which yields, 
\begin{align*}
 \int_{\delta}^{\infty}       \left| \frac{a_\theta}{\sh(r)} \right|^p \sh(r) dr & \lesssim   \left\| \varepsilon \right\|_{L^{\infty}}^{2p} + \left\| \varepsilon  \right\|_{L^{\infty}}^p + \left(1+ \left\| \varepsilon \right\|_{L^{\infty}}^p \right)   \left\| \varepsilon \right\|_{L^p}^p \\
 & \lesssim \left\| \varepsilon \right\|_{L^{\infty}}^{p} + \left\| \varepsilon \right\|_{L^{\infty}}^{p} \left\| \varepsilon \right\|_{L^p}^p + \left\| \varepsilon \right\|_{L^p}^p+\left\| \varepsilon \right\|_{L^{\infty}}^{2p}.
\end{align*}
This concludes the proof of Lemma \ref{claim-estim_a_theta/sinh}.
\end{proof}

\begin{lemma} 
\label{lem:est-coth_a_theta/sinh-eps_1}
 Let $1< p,p_1,p_2 <\infty$ such that $\frac{1}{p_1}+\frac{1}{p_2}=\frac{1}{p}.$ Then, we have
\begin{align*}
 \left\|  \coth(r)  \frac{a_{\theta}}{\sh(r)}     \varepsilon_1 \right\|_{L^p} \lesssim (1+\|\varepsilon\|_{L^\infty})(\left\| \varepsilon \right\|_{L^{\infty}}  + \left\| \varepsilon \right\|_{L^{p_1}}) \left\|  \varepsilon_1  \right\|_{L^{p_2} }.
\end{align*}
 \end{lemma}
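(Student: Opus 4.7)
The only obstruction to combining H\"older's inequality with Lemma~\ref{claim-estim_a_theta/sinh} is the $1/r$ singularity of $\coth r$ at the origin. The key observation is that this singularity is annihilated by the vanishing of $a_\theta/\sh r$ at the origin: from the integral representation \eqref{equ-integral-a_theta/sinh}, the elementary bound $|Q|\le 1$, and the identity $\int_0^r \sh s\,ds = \ch r - 1 = 2\sh^2(r/2)$, one obtains the pointwise inequality
\[
\Big|\frac{a_\theta(r)}{\sh r}\Big|\lesssim (\|\varepsilon\|_{L^\infty}+\|\varepsilon\|_{L^\infty}^2)\,\tha(r/2).
\]
A direct computation gives $\coth r\cdot \tha(r/2)=1-\tfrac{1}{2\ch^2(r/2)}\in[\tfrac12,1)$ for all $r>0$, yielding the global uniform bound
\[
\Big\|\coth r\,\tfrac{a_\theta}{\sh r}\Big\|_{L^\infty(\mathbb{H}^2)}\lesssim \|\varepsilon\|_{L^\infty}(1+\|\varepsilon\|_{L^\infty}).
\]

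With this pointwise control in hand, I would split the $L^p$-norm over the two regions $\{r\le 1\}$ and $\{r\ge 1\}$. On the near-origin piece I absorb $\coth r\cdot a_\theta/\sh r$ into $L^\infty$; the hyperbolic measure $\sh r\,dr$ restricted to $\{r\le 1\}$ has finite total mass, and since $1/p_2=1/p-1/p_1>0$ we have $p_2>p$, so H\"older on the finite-measure set reduces $\|\varepsilon_1\|_{L^p(r\le 1)}$ to a multiple of $\|\varepsilon_1\|_{L^{p_2}}$. On the far-from-origin piece $\coth r$ is itself bounded by $\coth 1$, so a standard H\"older with exponents $(p_1,p_2)$ on $\{r\ge 1\}$ reduces the estimate to controlling $\|a_\theta/\sh r\|_{L^{p_1}}$, and Lemma~\ref{claim-estim_a_theta/sinh} bounds this quantity by $(1+\|\varepsilon\|_{L^\infty})(\|\varepsilon\|_{L^\infty}+\|\varepsilon\|_{L^{p_1}})$.

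Adding the two regional bounds, both pieces are dominated by $(1+\|\varepsilon\|_{L^\infty})(\|\varepsilon\|_{L^\infty}+\|\varepsilon\|_{L^{p_1}})\|\varepsilon_1\|_{L^{p_2}}$, proving the lemma. There is no real obstacle here: the only nontrivial ingredient is the pointwise cancellation $\coth r\cdot a_\theta/\sh r\in L^\infty$, which is forced by the integral structure of $a_\theta$ in \eqref{equ-integral-a_theta/sinh} together with $|Q|\le 1$; everything else is a H\"older split exploiting that $p_2>p$ allows local $L^p$-norms to be dominated by global $L^{p_2}$-norms on sets of finite hyperbolic measure.
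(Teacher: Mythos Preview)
Your proof is correct and shares the paper's overall structure: split at $r=1$, handle $r\ge 1$ by H\"older plus Lemma~\ref{claim-estim_a_theta/sinh} (since $\coth r$ is bounded there), and for $r\le 1$ exploit the vanishing of $a_\theta$ at the origin to kill the $\coth r$ singularity.

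The one genuine difference is how you execute the near-origin step. The paper places $\coth r\cdot a_\theta/\sh r$ in $L^{p_1}(r<\delta)$ and computes the resulting integral directly from \eqref{equ-integral-a_theta/sinh}, using the precise vanishing $Q(r)\sim r^m$. You instead derive the sharper \emph{pointwise} bound $|\coth r\cdot a_\theta/\sh r|\lesssim \|\varepsilon\|_{L^\infty}(1+\|\varepsilon\|_{L^\infty})$ from the hyperbolic identity $\coth r\cdot\tha(r/2)=1-\tfrac{1}{2\ch^2(r/2)}\in[\tfrac12,1)$, then use $p_2>p$ and finite measure to pass $\|\varepsilon_1\|_{L^p(r\le1)}\lesssim\|\varepsilon_1\|_{L^{p_2}}$. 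Your route is slightly more elementary (it needs only $|Q|\le 1$, not the small-$r$ asymptotics of $Q$) and yields a global $L^\infty$ bound that could be reused elsewhere; the paper's version is more hands-on but avoids the extra H\"older step on $\varepsilon_1$.
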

 \begin{proof}
 Using H\"older's inequality, we have 
\begin{align*}
\left\| \coth(r) \frac{a_{\theta}}{\sh(r)}     \varepsilon_1  \right\|_{L^p(r < \delta )} \lesssim \left\|  \coth(r) \frac{a_{\theta}(r)}{\sh(r)}    \right\|_{L^{p_1}(r < \delta )} \left\|    \varepsilon_1   \right\|_{L^{p_2}(r < \delta)}
\end{align*}
 
 Using \eqref{equ-integral-a_theta/sinh}, we obtain 
\begin{align*}
\left\|  \coth(r) \frac{a_{\theta}(r)}{\sh(r)}    \right\|_{L^{p_1}(r \leq \delta)}^{p_1} 
&\lesssim  \int_0^{\delta}  \frac{\coth^{p_1}(r)}{\sh^{p_1}(r)} \left| \left(  \int_0^r |\varepsilon(s)|^2 \sh(s) ds + \int_0^r Q(s) \re(\varepsilon(s)) \sh(s) ds   \right)  \right|^{p_1} \sh(r) dr \\
 &\lesssim \int_0^{\delta} \frac{1}{r^{2{p_1}-1}} \left|   \int_0^r |\varepsilon(s)|^2 \,s\, ds  \right|^{p_1} dr  
+ \int_0^{\delta} \frac{1}{r^{2{p_1}-1}} \left| \int_0^r s^m
\re(\varepsilon(s))\, s \, ds    \right|^{p_1}    dr \\
 & \lesssim  \left\| \varepsilon \right\|_{L^{\infty}}^{2{p_1}}  \int_0^{\delta} \frac{1}{r^{2{p_1}-1}} r^{2{p_1}} dr +   \left\| \varepsilon \right\|_{L^{\infty}}^{p_1} \int_0^{\delta} \frac{1}{r^{2{p_1}-1}}  r^{mp_1+2{p_1}} dr\\
 & \lesssim  (1+\|\varepsilon\|_{L^\infty}^{p_1})\left\| \varepsilon \right\|_{L^{\infty}}^{p_1}
\end{align*}

For large $r, $ we use H\"older's inequality along with the fact $\coth(r)$ is bounded, together with Lemma \ref{claim-estim_a_theta/sinh}. 
\begin{align*}
\left\| \coth(r) \frac{a_{\theta}}{\sh(r)}     \varepsilon_1  \right\|_{L^p(r \geq  1 )} &\lesssim     \left\|  \frac{a_{\theta}}{\sh(r)}       \right\|_{L^{p_1} }  \left\|  \varepsilon_1  \right\|_{L^{p_2} } \\
& \lesssim (1+\|\varepsilon\|_{L^\infty})(\left\| \varepsilon \right\|_{L^{\infty}}  + \left\| \varepsilon \right\|_{L^{p_1}}) \left\|  \varepsilon_1  \right\|_{L^{p_2} }.
\end{align*}

\end{proof}

\begin{lemma} \label{lem:A_0-Lp}
Let $2\leq p < \infty$
\begin{align*}
 \left\| A_0 \right\|_{L^p}   \lesssim (1+\left\| \varepsilon  \right\|_{L^\infty}) \left\| \varepsilon_1 \right\|_{L^p}.
\end{align*}
\end{lemma}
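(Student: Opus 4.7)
The plan is to start from the explicit representation \eqref{eq:A0toep1}:
\begin{equation*}
    A_0(r) = \tfrac{1}{2}\int_r^{\infty} \bigl( Q\,\re(\varepsilon_1) + \re(\varepsilon_1 \bar{\varepsilon}) \bigr) ds.
\end{equation*}
Since $0 \leq Q \leq 1$ and $|\re(\varepsilon_1 \bar{\varepsilon})| \leq \|\varepsilon\|_{L^{\infty}} |\varepsilon_1|$, we obtain the pointwise bound $|A_0(r)| \lesssim (1 + \|\varepsilon\|_{L^{\infty}}) \int_r^{\infty} |\varepsilon_1(s)|\, ds$. The proof thus reduces to showing that the linear operator
\begin{equation*}
    T h(r) := \int_r^{\infty} h(s)\, ds
\end{equation*}
is bounded on $L^p\bigl(\sh(r)\, dr\bigr)$ for $2 \leq p < \infty$.

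To establish this $L^p$-boundedness, I would apply Schur's test. Writing $d\mu(s) = \sh(s)\, ds$ and $K(r,s) = \mathbb{1}_{\{s > r\}}/\sh(s)$, one has $Th(r) = \int K(r,s)\, h(s)\, d\mu(s)$. With the weight $w(s) = e^{-\alpha s}$ for any fixed $\alpha \in (0,1)$, the first Schur condition is immediate,
\begin{equation*}
    \int K(r,s)\, w(s)\, d\mu(s) = \int_r^{\infty} e^{-\alpha s}\, ds = \tfrac{1}{\alpha} w(r),
\end{equation*}
while for the second condition one computes
\begin{equation*}
    \int K(r,s)\, w(r)\, d\mu(r) = \frac{1}{\sh(s)} \int_0^{s} e^{-\alpha r}\sh(r)\, dr,
\end{equation*}
which behaves like $s/2$ as $s \to 0^+$ and like $\tfrac{1}{2(1-\alpha)}\, e^{-\alpha s}$ as $s \to \infty$; hence it is bounded by a constant multiple of $w(s) = e^{-\alpha s}$ uniformly in $s > 0$. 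Schur's test then yields $\|Th\|_{L^p(\sh dr)} \lesssim \|h\|_{L^p(\sh dr)}$, which combined with the pointwise bound above produces the desired estimate
\begin{equation*}
    \|A_0\|_{L^p} \lesssim (1 + \|\varepsilon\|_{L^{\infty}}) \|\varepsilon_1\|_{L^p}.
\end{equation*}

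The only mildly delicate point is the verification of the second Schur condition uniformly across the small- and large-$s$ regimes; the range $\alpha \in (0,1)$ is dictated by this requirement, since for $\alpha \geq 1$ the integral $\int_0^s e^{-\alpha r}\sh(r)\, dr$ fails to be controlled by $e^{-\alpha s}\sh(s)$ as $s \to \infty$. This argument is in the same spirit as the Schur-type estimates carried out in the preceding lemmas of this section, and requires no new input beyond the pointwise representation of $A_0$.
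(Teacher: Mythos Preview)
Your approach is correct in spirit and arguably cleaner than the paper's, but there is a technical imprecision in the invocation of Schur's test.

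The paper proceeds differently: it splits into $r\le\delta$ and $r\ge\delta$, handles the small-$r$ piece by a direct H\"older estimate, and for large $r$ conjugates by $e^{r/p}$ so that the resulting kernel $K(s,r)=\mathbb{1}_{\{r\le s\}}e^{(r-s)/p}$ has uniformly bounded $L^1$-norms in each variable; the \emph{unweighted} Schur test then gives $L^p$-boundedness for every $p$. Your route---a single pointwise bound followed by $L^p$-boundedness of $Th(r)=\int_r^\infty h(s)\,ds$ on $L^p(\sh(r)\,dr)$---avoids the split entirely, which is a genuine simplification.

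The gap is that the weighted Schur conditions you verify,
\[
\int K(r,s)w(s)\,d\mu(s)\le C\,w(r),\qquad \int K(r,s)w(r)\,d\mu(r)\le C\,w(s),
\]
only yield $L^2$-boundedness in general, not $L^p$ for $p\ne 2$. (One can build rank-one counterexamples.) For $L^p$ one needs the asymmetric version: the second condition must be replaced by $\int K(r,s)w(r)^{\,p-1}\,d\mu(r)\le C\,w(s)^{\,p-1}$. In your situation this is a cosmetic fix: with $w(r)=e^{-\alpha r}$ the second condition with exponent $p-1$ is exactly your computation with parameter $\alpha(p-1)$ in place of $\alpha$, so it holds provided $\alpha(p-1)<1$, i.e.\ $\alpha\in(0,\tfrac{1}{p-1})$. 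Since the first condition holds for all $\alpha>0$, choosing any $\alpha<\tfrac{1}{p-1}$ closes the argument. Alternatively, you can conjugate by $\sh(r)^{1/p}$ as the paper does and check directly that the kernel $\mathbb{1}_{\{s>r\}}(\sh(r)/\sh(s))^{1/p}$ has bounded $L^1$-norms in each variable, which gives all $L^p$ at once by the unweighted Schur/Young inequality.
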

\begin{proof}
Recall that by \eqref{eq:A0toep1}
\begin{align*}
  A_0  = \frac{1}{2 } \int_r^{\infty} \re(\varepsilon_1) Q + \re(\varepsilon_1 \bar{\varepsilon}) \, ds
\end{align*}
For small $r,$ we have
\begin{align*}
 \int_0^{\delta}  |A_0|^p \sh(r) dr & \lesssim  \int_0^{\delta}    \left| \int_r^{1} \re(\varepsilon_1) s^{m} ds \right|^p r dr  + \int_0^{\delta} \left|  \int_1^{\infty} \re(\varepsilon_1) ds \right|^p r dr + \int_0^{\delta}    \left|  \int_r^{\infty} \re(\varepsilon_1 \bar{\varepsilon}) \, ds    \right|^p r dr \\   
 & \lesssim \int_0^{\delta}  \left( \int_r^1 |\varepsilon_1|^p s ds \right)   \left(\int_r^1 s^{(m-\frac{1}{p})p^{\prime}}   ds\right)^{\frac{p}{p^{\prime}}} r dr \\
 &+ \int_0^{\delta} \left( \int_1^{\infty} |\varepsilon_1|^p \sh(s) ds   \right) \left( \int_1^{\infty} (\sh(s))^{-\frac{p^{\prime}}{p}s} ds \right)^{\frac{p}{p^{\prime}}} r dr \\ 
 & + \left\| \varepsilon \right\|_{L^{\infty}}^p \int_0^{\delta} \left( \int_r^{\infty} |\varepsilon_1|^p \sh(s) ds \right) \left( \int_r^{\infty}  (\sh(s))^{-\frac{p^{\prime}}{p}} ds\right)^{\frac{p}{p^{\prime}}}   r dr \\ 
 & \lesssim  \left\| \varepsilon_1 \right\|_{L^p}^p +  \left\| \varepsilon_1 \right\|_{L^p}^p \left\| \varepsilon  \right\|_{L^\infty}^p
 \end{align*} 
 For large $r,$ we obtain
 \begin{align*}
 \int_{\delta}^{\infty}  |A_0|^p \sh(r) dr & \lesssim  \int_{\delta}^{\infty}  \left|  \int_r^{\infty} \re(\varepsilon_1) ds  +    \int_r^{\infty} \re(\varepsilon_1 \bar{\varepsilon}) \, ds    \right|^p e^r dr \\    
  & \lesssim  \int_{\delta}^{\infty} \left| \int_r^{\infty} e^{(r-s)\frac{1}{p }} \re(\varepsilon_1)e^{\frac{1}{p}s}  + \left\|\varepsilon \right\|_{L^{\infty}}\int_r^{\infty}   e^{(r-s)\frac{1}{p }} \re(\varepsilon_1) e^{\frac{1}{p}s} \right|^p dr 
 \\
 & \lesssim  \int_{\delta}^{\infty} \left| \int_{\delta}^{\infty}   \mathbb{1}_{ \{r \leq s \}} e^{(r-s)\frac{1}{p}} \re(\varepsilon_1)e^{\frac{s}{p}} ds \right| dr \left(1+ \left\| \varepsilon \right\|_{L^{\infty}}^p \right)
 \\
 & \lesssim \int_{\delta}^{\infty}  \left| \int_{\delta}^{\infty} K(s,r) \re(\varepsilon_1)e^{\frac{s}{p}} ds  \right|^p   dr  \left(1+ \left\| \varepsilon \right\|_{L^{\infty}}^p \right) \\
 \end{align*}
where 
\begin{align*}
 K(s,r)=   \mathbb{1}_{ \{r \leq s \}} e^{(r-s)\frac{1}{p}}
\end{align*}
Note that $\left\| K(s,r) \right\|_{L^1(ds)} $ bounded uniformly in $r$ and $\left\| K(s,r) \right\|_{L^1(dr)} $ bounded uniformly in $s.$
Then by Schur's test, we have
\begin{align*}
    \int_{\delta}^{\infty}  \left| \int_{\delta}^{\infty} K(s,r) \re(\varepsilon_1)e^{\frac{s}{p}} ds  \right|^p   dr \lesssim   \int_{\delta}^{\infty} 
|\varepsilon_1|^p e^{s}ds \lesssim \left\| \varepsilon_1 \right\|_{L^p}^p
\end{align*}
which yields, 

\begin{align*}
  \int_{\delta}^{\infty}  |A_0|^p \sh(r) dr  \lesssim   \left\| \varepsilon_1 \right\|_{L^p}^p   \left(1+ \left\| \varepsilon \right\|_{L^{\infty}}^p \right)  \lesssim  \left\| \varepsilon_1 \right\|_{L^p}^p +  \left\| \varepsilon_1 \right\|_{L^p}^p \left\| \varepsilon  \right\|_{L^\infty}^p
\end{align*}

\end{proof}

Next we estimate $\|\partial_r\varepsilon\|_{L^p}$ and $\|\frac{\varepsilon}{\sh r}\|_{L^p}$.
\begin{lemma}\label{lem:epH1mtoep1}
    For $2\leq p<\infty$, 
\begin{align} \label{eq:espsi/sh-p}
\left\| \frac{\varepsilon}{\sh(r)} \right\|_{L^p} \lesssim    \left\| \varepsilon_1 \right\|_{L^p}+(1+\|\varepsilon\|_{L^\infty})\left\|\frac{a_\theta}{\sh r}\right\|_{L^p}.
\end{align}
Moreover, 
\begin{align} \label{eq:Epsi-prime-p}
\left\| \partial_r \varepsilon \right\|_{L^p} \lesssim \left\| \varepsilon_1 \right\|_{L^p} +(1+\|\varepsilon\|_{L^\infty})  \left\|  \frac{a_{\theta}}{\sh(r)}\right\|_{L^p}.
\end{align}
\end{lemma}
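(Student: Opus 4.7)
The plan is to prove \eqref{eq:espsi/sh-p} first, via an integral representation for $\varepsilon/Q$ obtained from \eqref{partial_eps_equ}, and then deduce \eqref{eq:Epsi-prime-p} as a direct algebraic consequence of the defining equation of $\varepsilon_1$. To set this up, I start from \eqref{partial_eps_equ} rewritten as $\partial_r(\varepsilon/Q) = \varepsilon_1/Q - (a_\theta/\sh r)(1+\varepsilon/Q)$, note that $\varepsilon/Q \to 0$ at infinity (since $Q\to 1$ and $\varepsilon\in L^p$ under the bootstrap), and integrate from $r$ to $\infty$. Multiplying by $Q(r)/\sh r$ gives
\begin{equation*}
\frac{\varepsilon(r)}{\sh r} = -\frac{Q(r)}{\sh r}\int_r^\infty \frac{\varepsilon_1(s)}{Q(s)}\,ds + \frac{Q(r)}{\sh r}\int_r^\infty \frac{a_\theta(s)}{\sh s}\,ds + \frac{Q(r)}{\sh r}\int_r^\infty \frac{a_\theta(s)}{\sh s}\frac{\varepsilon(s)}{Q(s)}\,ds.
\end{equation*}

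Next I estimate each of the three terms in $L^p$ by splitting the $r$-integration into the regions $(0,1)$ and $(1,\infty)$ and following the Minkowski-integral and Schur-test patterns already used in the proofs of Claim~\ref{Claim-img-estimate} and Lemma~\ref{claim-estim_a_theta/sinh}. Using $Q(r)/\sh r \sim r^{m-1}$ near the origin and $Q(r)/\sh r \lesssim e^{-r}$ at infinity, the first two terms are bounded by $\|\varepsilon_1\|_{L^p}$ and $\|a_\theta/\sh r\|_{L^p}$, respectively. For the third (nonlinear) term, I split the $s$-integration at $s=1$: for $s\geq 1$ the factor $1/Q(s)$ is bounded and $|\varepsilon(s)|\leq \|\varepsilon\|_{L^\infty}$, giving a contribution bounded by $\|\varepsilon\|_{L^\infty}\,\|a_\theta/\sh r\|_{L^p}$; for $s\leq 1$ the singularity $1/Q(s)\sim s^{-m}$ is compensated by the refined bound $|a_\theta(s)|/\sh s \lesssim s\,\|\varepsilon\|_{L^\infty}(1+\|\varepsilon\|_{L^\infty})$, which follows directly from \eqref{Linear-A} together with $|Q|\leq 1$ and $\int_0^s \sh\sigma\,d\sigma = \cosh s - 1$. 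Together these combine to give the claimed bound $(1+\|\varepsilon\|_{L^\infty})\,\|a_\theta/\sh r\|_{L^p}$ on the third piece.

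Finally, \eqref{eq:Epsi-prime-p} will be immediate. Solving the identity $\varepsilon_1 = \partial_r\varepsilon + \frac{A_\theta[Q]-m}{\sh r}\varepsilon + \frac{a_\theta}{\sh r}(Q+\varepsilon)$ for $\partial_r\varepsilon$, and using $0\leq m - A_\theta[Q]\leq m$ so that $|(A_\theta[Q]-m)/\sh r|\lesssim 1/\sh r$, together with $\|Q\|_{L^\infty}\leq 1$ and the bound \eqref{eq:espsi/sh-p}, yields the result. The main obstacle is precisely the small-$s$ contribution to the third piece of the representation above, where both the weight $Q(r)/\sh r \sim r^{m-1}$ and the integrand $a_\theta(s)/(\sh s\, Q(s))$ carry competing powers of $r$ and $s$; the resolution relies on the fact that $a_\theta$ vanishes to order $r^{2m+2}$ at the origin, inherited from $\varepsilon \sim r^m$ and $Q\sim r^m$, which supplies the extra $s$-weight needed to close the $L^p$ estimate with the stated dependence on $\|a_\theta/\sh r\|_{L^p}$ and $\|\varepsilon\|_{L^\infty}$.
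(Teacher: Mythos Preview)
Your overall strategy coincides with the paper's: integrate \eqref{partial_eps_equ} from $r$ to $\infty$, split into three integrals, and control each by a Schur-type argument. The first two terms and the derivation of \eqref{eq:Epsi-prime-p} from \eqref{eq:espsi/sh-p} are fine and match the paper.

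The gap is in your treatment of the small-$s$ portion of the third (nonlinear) term. Using the pointwise bound $|a_\theta(s)|/\sh s \lesssim s\,\|\varepsilon\|_{L^\infty}(1+\|\varepsilon\|_{L^\infty})$ together with $|\varepsilon(s)|\le\|\varepsilon\|_{L^\infty}$ and $1/Q(s)\sim s^{-m}$ yields, after the outer $r$-integration, a contribution of size $\|\varepsilon\|_{L^\infty}^2(1+\|\varepsilon\|_{L^\infty})$, \emph{not} the claimed $(1+\|\varepsilon\|_{L^\infty})\|a_\theta/\sh r\|_{L^p}$. Once you spend the pointwise bound on $a_\theta$ to cancel $1/Q(s)$, you cannot simultaneously recover $\|a_\theta/\sh r\|_{L^p}$ on the right-hand side. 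Your final paragraph compounds this by invoking ``$\varepsilon\sim r^m$'' and hence $a_\theta\sim r^{2m+2}$; neither is implied by $\varepsilon\in H^1_m$ alone, so this heuristic is unjustified and in any case would not produce the stated estimate.

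The fix is simpler than your detour: the third integral has exactly the same kernel as the first. Writing
\[
\frac{Q(r)}{\sh r}\int_r^\infty \frac{1}{Q(s)}\,\Big(\frac{a_\theta(s)}{\sh s}\varepsilon(s)\Big)\,ds
\]
and applying the identical Schur test you already need for the $\varepsilon_1$ term (where the singularity $1/Q(s)\sim s^{-m}$ is balanced by $Q(r)/\sh r\sim r^{m-1}$ on $\{r\le s\}$) gives directly $\big\|\tfrac{a_\theta}{\sh r}\,\varepsilon\big\|_{L^p}\le \|\varepsilon\|_{L^\infty}\big\|\tfrac{a_\theta}{\sh r}\big\|_{L^p}$. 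This is precisely how the paper handles $I_3$, and it yields the lemma as stated without any splitting at $s=1$ or extra vanishing of $a_\theta$.
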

\begin{proof}
    
Using \eqref{partial_eps_equ}, we have 
\begin{align*}
\frac{\varepsilon}{\sh(r)}= \frac{Q(r)}{\sh(r)} \int_r^{\infty} \frac{\varepsilon_1(s)}{Q(s)} ds - \frac{Q(r)}{\sh(r)} \int_r^{\infty} \frac{a_{\theta}(s)}{\sh(s)} ds - \frac{Q(r)}{\sh(r)} \int_r^{\infty} \frac{1}{Q(s)} \frac{a_{\theta}(s)}{\sh(s)} \varepsilon(s) ds 
\end{align*}
Therefore, 

\begin{align*}
 \int_0^1 \bigg| \frac{\varepsilon}{\sh(r)} \bigg|^p \sh(r) dr & \lesssim \int_0^1 \bigg| \frac{Q(r)}{\sh(r)} \int_r^{\infty} \frac{\sh(s)^{-\frac{1}{p}}}{Q(s)}   \varepsilon_1(s) \sh(s)^{\frac{1}{p}} ds \bigg|^p   \sh(r) dr \\
 & + \int_0^1 \bigg| \frac{Q(r)}{\sh(r)} \int_r^{\infty} \frac{a_{\theta}(s)}{\sh(s)}   \sh(s)^{-\frac{1}{p}} \sh(s)^{\frac{1}{p}} ds \bigg|^p   \sh(r) dr\\
  & + \int_0^1 \bigg| \frac{Q(r)}{\sh(r)} \int_r^{\infty} \frac{1}{Q(s)} \frac{a_{\theta}(s)}{\sh(s)} \varepsilon(s)  \sh(s)^{-\frac{1}{p}} \sh(s)^{\frac{1}{p}} ds \bigg|^p   \sh(r) dr \\
  \lesssim I_1 + I_2 + I_3 
\end{align*}

Let $K_1(s,r):=\mathbb{1}_{\{ r\leq s  \}  } \mathbb{1}_{ \{ 0 \leq r \leq \min(1,s) \}   }     \frac{Q(r)}{\sh(r)} \sh^{\frac{1}{p}}(r)  \frac{\sh(s)^{-\frac{1}{p}}}{Q(s)}    $
Notice that, $\left\| K_1( r , \cdot ) \right\|_{L^1(r,\infty)}$ and $\left\| K_1( \cdot , s ) \right\|_{L^1(0,\min(s,1)}$ are uniformly bounded in $r$ and $s,$ respectively then 
\begin{align*}
|I_1| \lesssim \left\| \varepsilon_1 \right\|_{L^p}^p.
\end{align*}

Let $K_2(s,r):= \mathbb{1}_{\{ r\leq s  \}  } \mathbb{1}_{ \{ 0 \leq r \leq \min(1,s) \}}  \frac{Q(r)}{\sh(r)} \sh^{\frac{1}{p}}(r) \sh(s)^{-\frac{1}{p}} $
Notice that, $\left\| K_2( r , \cdot ) \right\|_{L^1(r,\infty)}$ and $\left\| K_2( \cdot , s ) \right\|_{L^1(0,\min(s,1)}$ are uniformly bounded in $r$ and $s,$ respectively then 
\begin{align*}
|I_2| \lesssim \left\| \frac{a_{\theta}}{\sh(r)} \right\|_{L^p}^p.
\end{align*}

Let $K_3(s,r):= \mathbb{1}_{\{ r\leq s  \}  } \mathbb{1}_{ \{ 0 \leq r \leq \min(1,s) \}}  \frac{Q(r)}{\sh(r)} \sh^{\frac{1}{p}}(r)  \frac{\sh(s)^{-\frac{1}{p}}}{Q(s)} $
Notice that, $\left\| K_3( r , \cdot ) \right\|_{L^1(r,\infty)}$ and $\left\| K_3( \cdot , s ) \right\|_{L^1(0,\min(s,1)}$ are uniformly bounded in $r$ and $s,$ respectively then 
\begin{align*}
|I_3| \lesssim \left\| \frac{a_{\theta}}{\sh(r)} \varepsilon \right\|_{L^p}^p.
\end{align*}

Therefore, we have 
\begin{align*}
\left\| \frac{\varepsilon}{\sh(r)} \right\|_{L^p(0,1)} \lesssim    \left\| \varepsilon_1 \right\|_{L^p}+ \left\| \frac{a_{\theta}}{\sh(r)} \right\|_{L^p} + \left\| \frac{a_{\theta}}{\sh(r)} \right\|_{L^{p}} \left\|  \varepsilon \right\|_{L^\infty} 
\end{align*}
This concludes the proof of \eqref{eq:espsi/sh-p}. Next, recall that by\eqref{partial_eps_equ}, we have    
\begin{align*}
\partial_r \varepsilon = \varepsilon_1 - Q  \frac{a_{\theta}}{\sh(r)} - \frac{a_{\theta}}{\sh(r)} \varepsilon + \varepsilon \frac{\partial_r Q}{Q}
\end{align*}
Thus, 
\begin{align*}
 \left\| \partial_r \varepsilon \right\|_{L^p} \lesssim \left\| \varepsilon_1 \right\|_{L^p}+ \left\| \frac{\varepsilon}{\sh(r)} \right\|_{L^p} +(1+\|\varepsilon\|_{L^\infty})  \left\|  \frac{a_{\theta}}{\sh(r)}\right\|_{L^p}.
\end{align*}
Together with \eqref{eq:espsi/sh-p} this concludes the proof of \eqref{eq:Epsi-prime-p}.
\end{proof}
The next lemma will be used in the proof of both Proposition~\ref{main-prop} and Theorem~\ref{main-theo}.
\begin{lemma}\label{lem:epLptoepH1m}
    We have $\left\| \varepsilon \right\|_{L^2 \cap L^{\infty} }  \lesssim \|\varepsilon\|_{H^1_m}$. Moreover, if $\left\| \varepsilon \right\|_{H^1_m}\leq \delta$, and $\delta$ is sufficiently small, then $\left\| \varepsilon_1\right\|_{L^2}\lesssim \delta$. 
\end{lemma}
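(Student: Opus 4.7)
The plan is to establish the two inequalities separately, with the first being a Sobolev-type embedding adapted to the equivariant setting, and the second a direct computation from the definition of $\varepsilon_1$ combined with the already-established bound on $a_\theta/\sh r$.

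\emph{First inequality.} The bound $\|\varepsilon\|_{L^2}\lesssim \|\varepsilon\|_{H^1_m}$ is immediate: the $H^1_m$-norm is equivalent to $\|\partial_r\varepsilon\|_{L^2}+\|\varepsilon/\sh r\|_{L^2}$, the undifferentiated $L^2$ term being redundant because of the spectral gap of $-\Delta_{\HH^2}$, so $\|\varepsilon\|_{L^2}$ is dominated by the same spectral gap inequality. For the $L^\infty$ bound I use a one-dimensional Sobolev argument. One first checks that $|\varepsilon(s)|^2\to 0$ as $s\to\infty$: the integrand $\re(\bar\varepsilon\,\partial_s\varepsilon)$ is absolutely integrable on $(r,\infty)$ by Cauchy–Schwarz with the splitting $\bar\varepsilon/\sh^{1/2}s$ and $\partial_s\varepsilon\cdot\sh^{1/2}s$, hence $|\varepsilon(s)|^2$ has a limit at infinity, and that limit must vanish because $\varepsilon\in L^2(\sh r\,dr)$. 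Then
\[
|\varepsilon(r)|^2=-2\re\int_r^\infty \bar\varepsilon\,\partial_s\varepsilon\, ds\leq 2\Big(\int_r^\infty \frac{|\varepsilon|^2}{\sh^2 s}\sh s\, ds\Big)^{1/2}\Big(\int_r^\infty |\partial_s\varepsilon|^2 \sh s\, ds\Big)^{1/2}\lesssim \Big\|\frac{\varepsilon}{\sh r}\Big\|_{L^2}\|\partial_r\varepsilon\|_{L^2}\lesssim \|\varepsilon\|_{H^1_m}^2,
\]
uniformly in $r$, and taking the supremum finishes the first assertion.

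\emph{Second inequality.} The starting point is the identity
\[
\varepsilon_1=\partial_r\varepsilon +\frac{A_\theta[Q]-m}{\sh r}\,\varepsilon+\frac{a_\theta}{\sh r}(Q+\varepsilon),
\]
which is just $\varepsilon_1=\widetilde D_{+,m}(Q+\varepsilon)$ rewritten through \eqref{Q-equation}. Since $A_\theta[Q]$ is non-decreasing (from $\partial_r A_\theta[Q]=\tfrac12\sh(r)(1-Q^2)\ge 0$) and $A_\theta[Q](\infty)=m$, we have $0\leq m-A_\theta[Q]\leq m$, so the first two terms of this identity are controlled in $L^2$ by $\|\partial_r\varepsilon\|_{L^2}+m\|\varepsilon/\sh r\|_{L^2}\lesssim \|\varepsilon\|_{H^1_m}\leq \delta$. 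For the third term I invoke Lemma~\ref{claim-estim_a_theta/sinh} at $p=2$, combined with the first part of this lemma, to obtain
\[
\Big\|\frac{a_\theta}{\sh r}\Big\|_{L^2}\lesssim (1+\|\varepsilon\|_{L^\infty})(\|\varepsilon\|_{L^\infty}+\|\varepsilon\|_{L^2})\lesssim (1+\delta)\delta\lesssim \delta.
\]
Using $\|Q\|_{L^\infty}\leq 1$ and $\|\varepsilon\|_{L^\infty}\lesssim \delta$, this gives $\|(a_\theta/\sh r)(Q+\varepsilon)\|_{L^2}\lesssim \delta(1+\delta)\lesssim \delta$. Summing the three contributions produces $\|\varepsilon_1\|_{L^2}\lesssim \delta$.

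The one mildly delicate step is the $L^\infty$-embedding, since $H^1$ on a two-dimensional manifold does not in general embed in $L^\infty$. What saves the argument in the equivariant case is precisely the Hardy-type weight $1/\sh r\sim 1/r$ appearing in the $H^1_m$-norm (available because $m\geq 1$), which reduces the embedding to a genuine one-dimensional Sobolev calculation. Everything else is a routine combination of the previously established lemmas with the Bogomolny structure of the vortex.
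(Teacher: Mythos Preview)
Your proof is correct and follows essentially the same route as the paper. The only cosmetic difference is that for the $L^\infty$ bound the paper integrates from $0$ to $r$ (using $\varepsilon(0)=0$ from equivariance) rather than from $r$ to $\infty$, but both are the same one-dimensional Sobolev/Hardy argument, and the treatment of $\varepsilon_1$ via Lemma~\ref{claim-estim_a_theta/sinh} is identical.
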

  \begin{proof}
Notice that $L^2$-bound of $\varepsilon$ is straightforward by the fact that  $ \left\|\varepsilon \right\|_{L^2} \leq \left\| \partial_r \varepsilon \right\|_{L^2}$ on $\mathbb{H}^2$. For $L^{\infty},$ we can write 
\begin{align*}
  \varepsilon^2(r) &=2 \int_0^r \frac{\varepsilon(s)}{\sh(s)} \sh^{\frac{1}{2}}(s) \partial_s \varepsilon \sh^{\frac{1}{2}}(s) ds  \lesssim \left\| \frac{\varepsilon}{\sh(r)} \right\|_{L^2}
  \left\| \partial_r \varepsilon \right\|_{L^2} .
\end{align*}
Therefore, we obtain $\left\| \varepsilon \right\|_{L^2 \cap L^{\infty}}$ is small. Next, we estimate the $L^2$-norm of $\varepsilon_1,$ recall that $\varepsilon_1=D_{+}(Q+\varepsilon)=\partial_r \varepsilon + \frac{A_{\theta}[Q]-m}{\sh(r)} \varepsilon + \frac{a_{\theta}}{\sh(r)}(Q+\varepsilon), $ Then we have 
\begin{align*}
 \left\| \varepsilon_1 \right\|_{L^2 } \lesssim \left\|  \varepsilon \right\|_{H^1_m} + \left\| \frac{a_{\theta}}{\sh(r)} \right\|_{L^2} \bigg(\left\| Q \right\|_{L^{\infty}} + \left\|  \varepsilon \right\|_{L^{\infty}}    \bigg)  .
\end{align*}
Using Lemma~\ref{claim-estim_a_theta/sinh}, we obtain that $\left\| \varepsilon_1 \right\|_{L^2}\lesssim\delta$.
  \end{proof}
We are now ready to prove Proposition~\ref{main-prop}.

\begin{proof}[Proof of Proposition~\ref{main-prop} ]
    For simplicity of notation let 
    \begin{align*}
        X:=\sup_{t\in I}\|\varepsilon \|_{H^1_m}+\|\varepsilon_1\|_{\mathcal{S}(I)}.
    \end{align*}
    We will prove that for some $b>1$
    \begin{align}\label{eq:XStrichartzgoal1}
        X\lesssim \|\varepsilon(0)\|_{H^1_m}+\|\varepsilon_1(0)\|_{L^2}+X^b.
    \end{align}
    If $\delta$ is sufficiently small, this proves the desired result. Using \eqref{eq-2-epsilon1}, Corollary~\ref{Strichartz},
\begin{align*}
 \left\| \varepsilon_1 \right\|_{S} &\lesssim  \left\| \varepsilon_1(0) \right\|_{L^2} + \left\| N(\varepsilon_1) \right\|_{L^{\frac{4}{3}} L^{\frac{4}{3}}}  \\
 &\lesssim \left\| \varepsilon_1(0) \right\|_{L^2} + \left\| A_0  \right\|_{L^{\frac{8}{3}} L^{\frac{8}{3}}}
 \left\| \varepsilon_1  \right\|_{L^{\frac{8}{3}} L^{\frac{8}{3}}} +   \left\|\varepsilon  \right\|_{L^{\frac{8}{3}} L^{\frac{8}{3}}} \left\|\varepsilon_1   \right\|_{L^{\frac{8}{3}} L^{\frac{8}{3}}} +\left\|\varepsilon_1  \right\|_{L^{4} L^{4}} \left\|\varepsilon  \right\|^2_{L^{4} L^{4}} \\
 & \quad+ \left\|  \coth(r)  \frac{a_{\theta}}{\sh(r)}     \varepsilon_1 \right\|_{L^{\frac{4}{3}} L^{\frac{4}{3}}}\Big(1+\Big\|\frac{A_\theta[Q]-m}{\ch (r)
}\Big\|_{L^\infty}\Big)  + \left\| \frac{a_{\theta}^2}{\sh^2(r)} \varepsilon_1  \right\|_{L^{\frac{4}{3}} L^{\frac{4}{3}}}.
\end{align*}
We can combine Lemma \ref{lem:epsi-LP-epsi1L_p}, \ref{claim-estim_a_theta/sinh}, \ref{lem:est-coth_a_theta/sinh-eps_1}, \ref{lem:A_0-Lp} to estimate the right-hand side above. 
Indeed, the most delicate terms are the ones on the last line above which can be bounded as follows. Observe that by the first part of Lemma~\ref{lem:epLptoepH1m}, we have $\|\varepsilon\|_{L^p}\lesssim \|\varepsilon\|_{L^\infty}+\|\varepsilon\|_{L^2}\lesssim \|\varepsilon\|_{H^1_m}$ for all $2\leq p \leq \infty$. Then,
\begin{align*}
    \left\|\frac{a_\theta}{\sh r}\varepsilon_1\right\|_{L^{\frac{4}{3}}L^{\frac{4}{3}}}\leq \left\|\frac{a_\theta}{\sh r}\right\|_{L^\infty L^4}^2\|\varepsilon\|_{L^{\frac{4}{3}} L^4}\lesssim X^3+X^5.
\end{align*}
Also
\begin{align*}
    \left\|  \coth(r)  \frac{a_{\theta}}{\sh(r)}     \varepsilon_1 \right\|_{L^p} \lesssim(1+\|\varepsilon\|_{L^\infty L^\infty })(\left\| \varepsilon \right\|_{L^\infty L^\infty}  + \left\| \varepsilon \right\|_{L^\infty L^{2}}) \left\|  \varepsilon_1  \right\|_{L^{\frac{4}{3}} L^4 }\lesssim X^2+X^3.
\end{align*}
This bounds $\|\varepsilon_1\|_{\mathcal{S}(I)}$ in terms of the initial data and higher powers of $X$. To estimate $\sup_{t\in I}\|\varepsilon\|_{H^1_m}$ we apply Lemma~\ref{lem:epH1mtoep1} with $p=2,$ together with Lemma\ref{lem:epsi-LP-epsi1L_p} and Lemma~\ref{claim-estim_a_theta/sinh}, to obtain
\begin{align*}
    \sup_{t\in I}\|\varepsilon\|_{H^1_m}\lesssim \sup_{t\in I}\|\varepsilon_1\|_{L^2}+X^2+X^3\leq \|\varepsilon_1\|_{\mathcal{S}(I)}+X^2+X^3.
\end{align*}
This concludes the proof of the desired estimate \eqref{eq:XStrichartzgoal1}.
\end{proof}

\subsection{Proof of Theorem~\ref{main-theo}}
\label{subsec:Proof-of-Theo}
Finally, we are in a position to prove Theorem~\ref{main-theo}.
\begin{proof}[Proof of Theorem~\ref{main-theo} ]
    Note that $\|\varepsilon\|_{L^\infty\cap L^2}\lesssim \|\varepsilon\|_{H^1_m}$ by the first part of Lemma~\ref{lem:epLptoepH1m}. Moreover, by the second part of the same lemma, if $\delta$ is sufficiently small, then $\|\varepsilon_1(0)\|_{L^2}\lesssim \delta$. It follows that the hypotheses of Proposition~\ref{main-prop} are satisfied and hence
    \begin{align}\label{eq:Strichinthm1}
        \sup_{t\in I} \|\varepsilon\|_{H^1_m}+\|\varepsilon_1\|_{\mathcal{S}(I)}\lesssim \delta,
    \end{align}
    with a constant independent of $\delta$, on any interval on which the solution exists. Therefore the solution exists globally and, we can take $I=\mathbb{R}$ in \eqref{eq:Strichinthm1}. It remains to prove that $\|\varepsilon\|_{\mathcal{S}^1_m(\mathbb{R})}\lesssim \delta$. But this is now a  direct consequence of \eqref{eq:Strichinthm1} and Lemma~\ref{lem:epH1mtoep1}.
\end{proof}
\appendix
\section{Well-posedness}\label{app:LWP}
In this section, we prove that the equation \eqref{eq:vareplinintro1} for $\varepsilon(r)$ is locally well posed with an arbitrary data in $H^1_m,$ via a fixed point argument. We rely on the Strichartz estimates for the free Schr\"odinger equation established in \cite{AnkerJeanPierfelice09}. For this purpose, we rewrite \eqref{eq:vareplinintro1} as follows:
We decompose the $m$-equivariant solution $(\Phi,A_{\theta})$ to  \eqref{GL-g-2}-\eqref{eq_B_2} as in \eqref{eq:decom1}, i.e., Let $\Phi=e^{ im\theta} \phi,$ where $\phi=Q+\varepsilon$ and  $A_{\theta}=A_{\theta}[Q] + a_{\theta}.$ 
Plugging this decomposition into the equation \eqref{GL-g-2} and using \eqref{eq_B_2} with \eqref{eq:Q}, we obtain the corresponding linearized equation for $\varepsilon.$ 

\begin{align}
\label{eq:varepsilon}
 \big( i  \partial_t + \frac{1}{2 } \mathbf{\Delta} \big) e^{i m \theta}  \varepsilon(t,r) = e^{i m \theta}  \mathcal{F}(\varepsilon(t,r)),
\end{align}
where $\mathbf{\Delta}= -\partial_r^{\ast} \partial_r + \frac{1}{\sh^2(r)} \partial_{\theta}^2$ and 
\begin{align*}
 \mathcal{F}(\varepsilon)   &= - \frac{1}{4}(1-Q^2) \varepsilon + \frac{1}{2} (Q\re(\varepsilon)+\frac{1}{2}|\varepsilon|^2) (Q+\varepsilon) + \frac{a_{\theta}}{\sh(r)} \frac{A_{\theta}[Q]-m}{\sh(r)} (Q+ \varepsilon) \\
&  + \frac{1}{2} (\frac{A_{\theta}[Q]^2-2 m A_{\theta}[Q]}{\sh^2(r)}) \varepsilon  + \frac{1}{2}(\frac{a_{\theta}}{\sh(r)})^2 (Q+ \varepsilon) - A_0 (Q+\varepsilon).
\end{align*}

\begin{theorem} 
\label{theo:well-posed}
Let $\Phi_0=e^{im \theta} \phi_0(r) \in H^1$ such that $\phi_0=Q+\varepsilon_0.$ Then there exists $\tau >0$ depending on $\left\| \varepsilon_0 \right\|_{H^1_m}$ such that for all $T \in (0,\tau)$ the equation \eqref{GL-g-2} has a unique solution of the form $\Phi=e^{i m \theta}(Q+\varepsilon(t)),$ where $\varepsilon(t)$ is the unique solution of \eqref{eq:varepsilon} in $\mathcal{C}((0,T),H^1_m(\HH^2)) \cap \mathcal{S}^1_m((0,T),\HH^2).$
\end{theorem}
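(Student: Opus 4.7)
The plan is to run a standard Picard iteration for the Duhamel form of \eqref{eq:varepsilon} in the space $X_T:=\mathcal{C}([0,T];H^1_m)\cap\mathcal{S}^1_m([0,T])$ with its natural norm. Define the solution map
\[
 \Psi(\varepsilon)(t)=e^{it\mathbf{\Delta}/2}\varepsilon_0 - i\int_0^t e^{i(t-s)\mathbf{\Delta}/2}\mathcal{F}(\varepsilon(s))\,ds,
\]
and invoke the free Strichartz estimates on $\mathbb{H}^2$ from \cite{AnkerJeanPierfelice09} to obtain $\|\Psi(\varepsilon)\|_{X_T}\lesssim \|\varepsilon_0\|_{H^1_m}+\|\mathcal{F}(\varepsilon)\|_{\mathcal{N}^1_m([0,T])}$. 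The task then reduces to bounding $\mathcal{F}(\varepsilon)$ in the dual Strichartz norm by $T^{\alpha}P(\|\varepsilon\|_{X_T})$ for some $\alpha>0$ and a continuous $P$ with $P(0)=0$, and analogously for differences $\mathcal{F}(\varepsilon)-\mathcal{F}(\tilde\varepsilon)$.

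To estimate $\mathcal{F}(\varepsilon)$ I would split it into three groups. The linear-in-$\varepsilon$ terms with exponentially decaying coefficients, $(1-Q^2)\varepsilon$ and $(A_\theta[Q]^2-2mA_\theta[Q])\varepsilon/\sh^2(r)$, are bounded using the endpoint pair $(\infty,2)$ together with Hölder in time, which produces a factor $T$. The polynomial nonlinearities $Q|\varepsilon|^2,|\varepsilon|^2\varepsilon,Q\re(\varepsilon)\varepsilon$ are controlled via Sobolev embedding $H^1(\mathbb{H}^2)\hookrightarrow L^p$ for all $p<\infty$ combined with Hölder in time against an admissible pair such as $(4,4)$ or $(8/3,8/3)$; the surplus in temporal integrability (compared to the Euclidean case) gives a small power $T^\alpha$. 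The nonlocal terms $(a_\theta/\sh r)^2(Q+\varepsilon)$, $(a_\theta/\sh r)(A_\theta[Q]-m)(Q+\varepsilon)/\sh r$, and $A_0(Q+\varepsilon)$ are handled via the integral representations \eqref{Linear-A} and $\partial_r A_0=-\tfrac12\re(\bar\phi\,\widetilde D_{+,m}\phi)$, using the pointwise bound of Lemma~\ref{claim-estim_a_theta/sinh} and a direct Schur argument analogous to Lemma~\ref{lem:A_0-Lp}, but without invoking the Darboux variable $\varepsilon_1$ (which is unavailable at this stage).

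For the $\nabla$-part of $\|\cdot\|_{\mathcal{N}^1_m}$ I would distribute the derivative via the Leibniz rule. On the local nonlinearities this produces additional copies of $\partial_r\varepsilon$ or $\varepsilon/\sh r$, which are controlled by the $X_T$-norm. For the nonlocal factors, the crucial observation is that
\[
 \partial_r\!\Big(\tfrac{a_\theta}{\sh r}\Big)=-\big(Q\re(\varepsilon)+\tfrac12|\varepsilon|^2\big)-\coth(r)\,\tfrac{a_\theta}{\sh r},\qquad \partial_r A_0=-\tfrac12\re(\bar\phi\,\widetilde D_{+,m}\phi),
\]
so differentiation produces local expressions in $\varepsilon$ and $\partial_r\varepsilon$, thereby avoiding any additional nonlocal analysis. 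Combining the three groups of bounds yields
\[
 \|\mathcal{F}(\varepsilon)\|_{\mathcal{N}^1_m([0,T])}\le C\,T^{\alpha}\,P(\|\varepsilon\|_{X_T}),
\]
and choosing $\tau=\tau(\|\varepsilon_0\|_{H^1_m})>0$ sufficiently small makes $\Psi$ a contraction on the closed ball of radius $2C\|\varepsilon_0\|_{H^1_m}$ in $X_T$ for every $T\le\tau$; the unique fixed point is the desired solution, and uniqueness together with the time-continuity in $H^1_m$ follow from the standard argument.

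The main obstacle I anticipate is the derivative estimate on $A_0(Q+\varepsilon)$: since $\partial_r A_0$ contains $\partial_r\varepsilon$, the resulting term is cubic of the form $(\partial_r\varepsilon)\cdot\varepsilon\cdot\varepsilon$ (after one $1_{[r,\infty)}$-integration), which sits at the borderline of the Strichartz scaling on $\mathbb{H}^2$. The richer family of admissible pairs on $\mathbb{H}^2$ (already exploited in Proposition~\ref{main-prop}) should resolve this by allowing the spatial Hölder step to land in $L^{q'}$ with $q'<2$, leaving room for a positive power of $T$ from the time integration. A secondary technical point is the behavior near $r=0$: because $\varepsilon\in H^1_m$ implies $\|\varepsilon/\sh r\|_{L^2}\lesssim \|\varepsilon\|_{H^1_m}$, one can absorb the $1/\sh r$ weights appearing in $\mathcal{F}(\varepsilon)$ by trading them against the $H^1_m$ norm rather than decaying coefficients.
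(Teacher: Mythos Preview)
Your proposal is correct and follows essentially the same route as the paper: a Picard iteration in $\mathcal{C}([0,T];H^1_m)\cap\mathcal{S}^1_m$ using the free Strichartz estimates of \cite{AnkerJeanPierfelice09}, with the nonlinearity split into local polynomial terms, linear potential terms, and the nonlocal pieces $a_\theta/\sh r$ and $A_0$ (the paper packages these as Claims~\ref{claim:a_theta-est}, \ref{claim:F_S}, \ref{clm:A_0}, \ref{clm:F'_S}). Your handling of $\partial_r\mathcal{F}$ via the identities for $\partial_r(a_\theta/\sh r)$ and $\partial_r A_0$ is exactly the observation behind the paper's Claim~\ref{clm:F'_S}, and the ``borderline'' cubic term you flag is resolved there (as you anticipate) either by the larger set of hyperbolic admissible pairs or, more directly, by the radial embedding $\|\varepsilon\|_{L^\infty}\lesssim\|\varepsilon\|_{H^1_m}$ (Lemma~\ref{lem:epLptoepH1m}), which lets one place that term in $L^1_TL^2_x$ with a full factor of $T$.
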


\begin{proof}
Notice that the Strichartz estimates from \cite[Theorem~3.6]{AnkerJeanPierfelice09} can be combined into the following estimate
\begin{align*}
 \left\|\varepsilon \right\|_{\mathcal{S}^1_m} \lesssim   \left\| \varepsilon_0 \right\|_{H^1_m}  + \left\|  \mathcal{F}(\varepsilon) \right\|_{\mathcal{N }^1_m}, 
\end{align*}

Fix $M>0$ to be specified later.  Let $B$ be the ball of $X=C([0,T],H^1_m) \cap \mathcal{S}^1_m((0,T), \HH^2)$, with radius $M>0$ and  center $0$, i.e the set of functions $u \in X$ such that 

 $$\left\|u \right\|_{L^{\infty}H^1_m} \leq M   \text{ and }   \left\|u \right\|_{\mathcal{S}^1_m } \leq M .$$ 
Denote 
$$ d_{B}(u,v)= \left\|u-v  \right\|_{L^{\infty}H^1_m} + \left\| u-v\right\|_{\mathcal{S}^1_m} $$

Then one can check that $(B,d_B)$ is a complete metric space. 
For $\varepsilon \in B $ we define $\Phi(\varepsilon)(t):= e^{it\Delta } e^{im \theta } \varepsilon_0 + D(\varepsilon)(t) ,$ where $D(\varepsilon)$ is the Duhamel term given~by
 $$  D(v)(t):= - \,i \,\displaystyle \int_{0}^{t} e^{i(t-s)\Delta} e^{im \theta } \mathcal{F}(\varepsilon(s)) ds  .  $$ 
\begin{itemize}
\item Step 1 : Stability of $B$.  \\
We will prove that: for $\varepsilon \in B \Longrightarrow \Phi(\varepsilon) \in B ,$ for a good choice of $M$ and $T$. \\
We have 

$$\left\|e^{it \Delta } \varepsilon_0 \right\|_{L^{\infty}H^1_m }= \left\| \varepsilon_0 \right\|_{H^1_m}   \leq \frac{M}{4}  .$$
If the following conditions satisfied \begin{equation}
\label{estimate_h^s}
   4 \left\|\varepsilon_0 \right\|_{ H^1_m} \leq M ,
\end{equation}
and using Strichartz estimate, we obtain 

\begin{equation*}
\left\|e^{it\Delta}\varepsilon_0 \right\|_{\mathcal{S}^1_m} \leq C_s  \left\|\varepsilon_0 \right\|_{H^1_m} \leq \frac{M}{4} .
\end{equation*} 
If $M$ is chosen so that \begin{equation}
\label{estimate_L^a_w^s,p}
    M \geq 4 C_s \left\|u_0 \right\|_{H^1_m}.
\end{equation}
Hence, we have  
\begin{equation}
\label{max-M/2}
    \max \left( \left\|e^{it \Delta } \varepsilon_0 \right\|_{L^{\infty}H^1_m },\left\|e^{it\Delta}\varepsilon_0 \right\|_{\mathcal{S}_m^1 }  \right) \leq \frac{M}{4}, \newline \; 
\end{equation}
Provided \eqref{estimate_h^s},  \eqref{estimate_L^a_w^s,p} are satisfied.

Next, we treat the nonlinear term. We first claim the following estimate, which will be used throughout the proof, and we postpone its proof to the end of this section. 
  \begin{claim}  \label{claim:a_theta-est}
Let $\alpha=1,2$, we have for any $2\leq p \leq \infty$
\begin{align*}
   \left\| \frac{a_\theta}{\sh^{\alpha}(r)} \right\|_{L^p} \lesssim  \left\| \varepsilon \right\|_{L^{2} }^{2}  + \left\| \varepsilon \right\|_{L^2 } + \left\| \frac{\varepsilon}{\sh(r)} \right\|_{L^2}^2  
\end{align*}
\end{claim}

We will first estimate the nonlinear term $\mathcal{F}$ in $L^2$-norm. 
 \begin{claim} \label{claim:F_S}
Let $0<T<1,$ then 
  \begin{align*}
\left\| \frac{1}{\sh(r)} \mathcal{F} (\varepsilon) \right\|_{\mathcal{N}} \leq C_1 T^{\frac{1}{2}} (\left\| \varepsilon \right\|_{\mathcal{S}^1_m}+\left\| \varepsilon \right\|_{\mathcal{S}^1_m}^5)
\end{align*} 
\end{claim}
\begin{proof}
Using the fact that $\frac{1}{\sh(r)}\partial_r A_{\theta}[Q]= \frac{1}{2}(1-Q^2),$ and $\frac{1}{\sh(r)}\partial_r a_{\theta}=-Q\re(\varepsilon)-\frac{1}{2}|\varepsilon|^2$,
   \begin{align*} 
   \left\| \frac{1}{4}(1-Q^2) \frac{\varepsilon}{\sh(r)} + Q^2 \re(\frac{\varepsilon}{\sh(r)}) \right\|_{L^1 L^2 } &\lesssim \bigg( \left\| 1-Q^2 \right\|_{L^1 L^{\infty}}  + \left\| Q^2 \right\|_{L^1 L^{\infty}} \bigg) \left\|  \frac{\varepsilon}{\sh(r)} \right\|_{L^{\infty}L^2} \\
   & \lesssim  T \left\|  \frac{\varepsilon}{\sh(r)} \right\|_{L^{\infty}L^2}  
   \end{align*} 

Using the fact that $Q(r)=O(r^{m}),$ as $r \to 0$ we have
\begin{align*}
 \left\|   \frac{Q}{\sh(r)}|\varepsilon|^2 \right\|_{L^{\frac{4}{3}} L^{\frac{4}{3}} } &\lesssim \left\| \frac{Q}{\sh(r)} \right\|_{L^2 L^{\infty}} \left\| |\varepsilon|^2 \right\|_{L^{4}L^{\frac{4}{3}}} \lesssim T^{\frac{1}{2}}  \left\| \varepsilon \right\|_{L^{8}L^{\frac{8}{3}}}^2
 \end{align*}

\begin{align*}
 \left\|   \frac{\varepsilon}{\sh(r)}  |\varepsilon|^2 \right\|_{L^{\frac{4}{3}} L^{\frac{4}{3}} } &\lesssim   \left\|  \frac{\varepsilon}{\sh(r)}   \right\|_{L^{4}L^{4}}   \left\| \varepsilon \right\|_{L^{4}L^{4}}^2 
 \end{align*}

Using the fact that $A_{\theta}[Q]=\int_0^r \frac{1}{2}(1-Q^2) \sh(s) ds $ and $Q(r)=O(r^{m}),$ as $r \to 0,$ we obtain 
 \begin{align*}
 \left\|   \frac{A_{\theta}[Q]^2 -2 m A_{\theta}[Q] }{\sh^2(r)}   \frac{\varepsilon}{\sh(r)}   \right\|_{L^{1} L^{2} } &\lesssim  \left\|   \frac{A_{\theta}[Q]^2 -2 m A_{\theta}[Q] }{\sh^2(r)} \right\|_{L^{1}L^{\infty}} 
 \left\|   \frac{\varepsilon}{\sh(r)}   \right\|_{L^{\infty}L^{2}}  \\
 & \lesssim T  \left\|   \frac{\varepsilon}{\sh(r)}   \right\|_{L^{\infty}L^{2}}
 \end{align*}

Using Claim \ref{claim:a_theta-est}, we obtain 
\begin{align*}
 \left\|   \frac{A_{\theta}[Q]-m }{\sh(r)} \frac{a_{\theta}}{\sh(r) }  \frac{\varepsilon}{\sh(r)}  \right\|_{L^{\frac{4}{3}} L^{\frac{4}{3}} } &\lesssim \left\|     \frac{a_{\theta}}{\sh^2(r) }   \right\|_{L^2L^{2}} 
 \left\|  \frac{\varepsilon}{\sh(r)}  \right\|_{L^{4}L^{4}}  \\
 &\lesssim T^{\frac{1}{2}} \left\|  \frac{\varepsilon}{\sh(r)}  \right\|_{L^{4}L^{4}}  \bigg(  
      \left\| \varepsilon \right\|_{L^{\infty} L^{2} }^{2} +   \left\| \varepsilon \right\|_{L^{\infty} L^{2} }  +  \left\| \frac{\varepsilon}{\sh(r)} \right\|_{L^{\infty} L^{2}}^2   \bigg)
 \end{align*}
Using the asymptotics of $Q$ and Claim \ref{claim:a_theta-est}, we get 
\begin{align*}
 \left\|   \frac{A_{\theta}[Q]-m }{\sh(r)} \frac{a_{\theta}}{\sh(r) }  \frac{Q}{\sh(r)}  \right\|_{L^{\frac{4}{3}} L^{\frac{4}{3}} } &\lesssim \left\|     \frac{a_{\theta}}{\sh^2(r) }   \right\|_{L^2L^{2}} 
 \left\|  \frac{Q}{\sh(r)}  \right\|_{L^{4}L^{4}}  \\
 &\lesssim T^{\frac{1}{4}}  T^{\frac{1}{2}}  \bigg(  
    \left\| \varepsilon \right\|_{L^{\infty} L^{2} }^{2} +   \left\| \varepsilon \right\|_{L^{\infty} L^{2} }  +  \left\| \frac{\varepsilon}{\sh(r)} \right\|_{L^\infty L^2}^2   \bigg) 
 \end{align*}

Similarly, we have 

\begin{align*}
  \left\| \left(\frac{a_{\theta}}{\sh(r)} \right)^2 \frac{Q}{\sh(r)} \right\|_{L^1 L^2} & \lesssim 
   \left\| \frac{a_{\theta}}{\sh(r)} \right\|_{L^{2} L^4}^2 \left\| \frac{Q}{\sh(r)} \right\|_{L^{\infty}} \\
   & \lesssim \left\| \varepsilon \right\|_{L^{4} L^{2} }^{4} +  \left\| \varepsilon \right\|_{L^2 L^{2} }^2 +  \left\| \frac{\varepsilon}{\sh(r)} \right\|_{L^{4} L^{2}}^4 \\
& \lesssim T   \bigg( \left\| \varepsilon \right\|_{L^\infty L^{2} }^4 +    \left\| \varepsilon \right\|_{L^{\infty} L^{2} }^{2}  + \left\| \frac{\varepsilon}{\sh(r)} \right\|_{L^\infty L^{2}}^4 \bigg)
\end{align*}

\begin{align*}
  \left\| \left(\frac{a_{\theta}}{\sh(r)} \right)^2 \frac{\varepsilon}{\sh(r)} \right\|_{L^1 L^2 }& \lesssim 
   \left\| \frac{a_{\theta}}{\sh(r)} \right\|_{L^{2} L^{\infty} }^2 \left\| \frac{\varepsilon}{\sh(r)}  \right\|_{L^{\infty}L^2} \\
   & \lesssim  \bigg( \left\| \varepsilon \right\|_{L^{4} L^{2} }^{4} +  \left\| \varepsilon \right\|_{L^2 L^{2} }^2 +  \left\| \frac{\varepsilon}{\sh(r)} \right\|_{L^{4} L^{2}}^4 \bigg) \left\| \frac{\varepsilon}{\sh(r)}  \right\|_{L^{\infty}L^2} \\
   & \lesssim T  \left\| \frac{\varepsilon}{\sh(r)}  \right\|_{L^{\infty}L^2} \bigg( \left\| \varepsilon \right\|_{L^\infty L^{2} }^4 +    \left\| \varepsilon \right\|_{L^{\infty} L^{2} }^{2}  + \left\| \frac{\varepsilon}{\sh(r)} \right\|_{L^\infty L^{2}}^4 \bigg)   \\ 
\end{align*}

Finally, we estimate the last nonlinear term $A_0\varepsilon.$ First, notice that by \eqref{eq_E_r_2} and \eqref{eq_B_2}, we have $\partial_r A_0=-\frac{1}{2} \re ( \bar{\phi} \title{D}_{+} \phi) .$ Integrating from $r$ to $\infty$ and using the decomposition of $\phi$ we obtain
\begin{align*}
  A_0 &= \frac{1}{2 } \int_r^{\infty}   \re\bigg( (  \partial_r \varepsilon + \frac{A_{\theta}[Q]-m}{\sh(s)} \varepsilon + \frac{a_{\theta}}{\sh(s)} )   ( Q + \bar{\varepsilon}) \bigg)ds.
\end{align*}
\begin{claim}
\label{clm:A_0}
    Let $\frac{4}{3}<p<4, $ and $q>2$ then we have 
    \begin{align*}
     \left\| \int_r^{\infty}   \re \bigg(  (\partial_r \varepsilon + \frac{A_{\theta}[Q]-m}{\sh(s)} \varepsilon +\frac{a_{\theta}}{\sh(s)}  )   \bar{\varepsilon} \bigg)  ds  \right\|_{L^p}  &  \lesssim \left\| \varepsilon \right\|_{H^1_m}^2  + \left\| \frac{a_{\theta}}{\sh(r)} \right\|_{L^4} \left\|\varepsilon \right\|_{L^2} \\
     \left\| \int_r^{\infty}   \re \bigg(  (\partial_r \varepsilon + \frac{A_{\theta}[Q]-m}{\sh(s)} \varepsilon +\frac{a_{\theta}}{\sh(s)}   )  Q \bigg)  ds  \right\|_{L^q} & \lesssim \left\| \varepsilon \right\|_{H^1_m}  +  \left\| \frac{a_{\theta}}{\sh(r)} \right\|_{L^2}
     \end{align*}

\end{claim}
\begin{proof}
  \begin{align*}
\int_0^{\infty} \left| \int_r^{\infty}   \re \bigg(  (\partial_r \varepsilon + \frac{A_{\theta}[Q]-m}{\sh(s)} \varepsilon + \frac{a_{\theta}}{\sh(s)}  )   \bar{\varepsilon} \bigg)  ds \right|^p \sh(r) dr  & \lesssim \bigg( \left\| \partial_r \varepsilon + \frac{A_{\theta}[Q]-m}{\sh(r)} \varepsilon \right\|_{L^2}^p \left\| \varepsilon \right\|_{L^4}^p \\
&\qquad \quad   + \left\| \frac{a_{\theta}}{\sh(r)} \right\|_{L^4}^p \left\|\varepsilon \right\|_{L^2}^p \bigg) \\
& \times \int_0^{\infty}  
\left(  \int_r^{\infty} \sh(s)^{-3}  ds \right)^{\frac{p}{4}} \sh(r)dr 
\end{align*}

 \begin{align*}
\int_0^{\infty} \left| \int_r^{\infty}   \re \bigg(  (\partial_r \varepsilon + \frac{A_{\theta}[Q]-m}{\sh(s)} \varepsilon +\frac{a_{\theta}}{\sh(s)}  )  Q \bigg)  ds \right|^q \sh(r) dr & \lesssim \bigg(  \left\| \partial_r \varepsilon + \frac{A_{\theta}[Q]-m}{\sh(r)} \varepsilon \right\|_{L^2}^q  \\
& \qquad \quad  + \left\| \frac{a_{\theta}}{\sh(r)} \right\|_{L^2}^q  \bigg) \\
& \times \int_0^{\infty}  
\left(  \int_r^{\infty} \sh(s)^{-1} Q^2(s)  ds \right)^{\frac{q}{2}} \sh(r)dr  
\end{align*} 
\end{proof}

Using Claim \ref{clm:A_0} and \ref{claim:a_theta-est} and similar estimate as for other nonlinear terms we obtain the desired result. This completes the proof of Claim \ref{claim:F_S},

\end{proof}

 \begin{claim} \label{clm:F'_S}
 Let $0<T<1,$ then 
\begin{align*} 
\left\| \partial_r \mathcal{F} (\varepsilon) \right\|_{\mathcal{N }} \leq C_2 T^{\frac{1}{2}} \left( \left\| \varepsilon \right\|_{\mathcal{S}_m^1} +  \left\| \varepsilon \right\|_{\mathcal{S}_m^1}^5\right)
\end{align*} 
\end{claim}
\begin{proof}
This is very similar to the analogous estimate for $\frac{F}{\sh r}$. If the derivative falls on $\varepsilon$, $A_0$, or $a_\theta$, then we estimate the corresponding terms as before, except that the resulting coefficients are less singular at $r=0$. Note that we did not use the decay of $(\sh r)^{-1}$ for large $r$ in estimating $\frac{F}{\sh r}$ above. When the derivative falls on the coefficients, then at worst this produces an extra $(\sh r)^{-1}$ singularity near $r=0$ which was already treated above. 
\end{proof}

Using Claim \ref{claim:F_S} and \ref{clm:F'_S}, we obtain 
\begin{align*}
\left\| \mathcal{F}(\varepsilon) \right\|_{\mathcal{N }^1} \leq \frac{M}{4},
\end{align*}
Provided, \begin{equation} \label{eq:M-cond-3}
 C \, T^{\frac{1}{2}} M^4 \leq \frac{1}{4}, \quad \text{where} \; C=\max(C_1,C_2)
\end{equation}

\item Step 2: Contraction property. \\
Using similar estimate as in step 1, one can check that for $0<T<1$ and for $u,v \in B$, we have 
  \begin{align*}
 \left\| \mathcal{F}(u) - \mathcal{F}(v) \right\|_{\mathcal{N }^1} \leq \frac{1}{2} \left\| u- v \right\|_{\mathcal{S}_m^1}    , 
  \end{align*}
Provided 
\begin{align}
    \label{eq:cond-contra} 
    0<C T^{\frac{1}{2}} M^{\alpha} <1, \quad \text{for some } \,\alpha>1.
\end{align}
Therefore, $\Phi$ is a contraction on the metric space $(B,d_B),$ and   one can choose $M$ and $T<\tau$ such that the conditions \eqref{max-M/2}, \eqref{eq:M-cond-3}, and \eqref{eq:cond-contra} hold. Then by the fixed point theorem there exist a unique solution $\varepsilon(t)$ for the equation \eqref{eq:varepsilon} and therefore a unique solution to \eqref{GL-g-2} of the form $\Phi=e^{i m \theta}(Q+\varepsilon(t)).$ 
\end{itemize}
This completes the proof of Theorem \ref{theo:well-posed}.
\end{proof}

We finish this section with the proof of Claim \ref{claim:a_theta-est}.
\begin{proof}[Proof Claim \ref{claim:a_theta-est} ]
We first estimate the $L^\infty$-norm. Near $r=0$, we consider the more difficult case $\alpha=2$ and for large $r$ the more difficult case $\alpha=1$.
For small $r,$ we have 
\begin{align*}
 \left| \frac{a_{\theta}}{\sh^{2}(r)}   \right| &\lesssim   \frac{1}{\sh(r)^2} \left| \int_0^r  (|\varepsilon|^2 + Q \re(\varepsilon ) ) \sh(s) ds  \right|   \\
 & \lesssim \left|  \int_0^r \frac{|\varepsilon|^2}{\sh^2(s) } \sh(s) ds  \right| + \frac{1}{\sh^2(r)}  \left\| \varepsilon  \right\|_{L^2}   \left(  \int_0^r Q^2 \sh(s) ds  \right)^{\frac{1}{2}} \\
  & \lesssim    \left\| \frac{\varepsilon}{\sh(r)}  \right\|_{L^2}^2 + \left\| \varepsilon  \right\|_{L^2} \frac{1}{r^2} \left( \int_0^r s^{2 m+1} ds \right)^{\frac{1}{2}}
\end{align*}
For large $r,$ we have 
\begin{align*}
 \left| \frac{a_{\theta}}{\sh(r)}   \right| &\lesssim   \frac{1}{\sh(r)} \left| \int_0^r  (|\varepsilon|^2 + Q \re(\varepsilon ) ) \sh(s) ds  \right|  dr \\
 &  \lesssim  \left\| \varepsilon \right\|_{L^2}^2 + \frac{1}{\sh(r)}  \left\| \varepsilon \right\|_{L^2} \bigg( \int_0^r \sh(s) ds   \bigg)^{\frac{1}{2}} \\
 &  \lesssim  \left\| \varepsilon \right\|_{L^2}^2 +   \left\| \varepsilon \right\|_{L^2} .
\end{align*}
Next, we estimate the $L^p$-norms. It suffices to consider large $r$, since the small $r$ regime is controlled by the $L^\infty$ bound established above. We focus on $\alpha=1$, since the case $\alpha=2$ follows similarly as it exhibits stronger decay.

     \begin{align*}
 \int_{1}^{\infty}       \left| \frac{a_\theta}{\sh(r)} \right|^p \sh(r) dr & \lesssim  \int_{1}^{\infty}  \frac{1}{\sh^{p}(r)}     \left|  \int_0^1  \left( |\varepsilon|^2 + Q \re(\varepsilon) \right)  \sh(s) ds  \right|^p \sh(r) dr \\
 & +  \int_{1}^{\infty}  \frac{1}{\sh^{ p}(r)} \left| \int_1^{r} \left( |\varepsilon|^2 + Q \re(\varepsilon) \right)  \sh(s) ds  \right|^p \sh(r) dr \\
 & \lesssim \int_{1}^{\infty} e^{- p r+r}     \left|  \int_0^1   |\varepsilon|^2 \sh(s) ds  + \int_0^1 s^{m+1} \re(\varepsilon)   ds  \right|^p  dr \\
 &+ \int_{1}^{\infty} e^{- p r+r}   \left|  \int_1^{r}   |\varepsilon|^2 \sh(s) ds \right|^p dr + \int_{1}^{\infty}   \left| \int_1^r  \re(\varepsilon) e^s  e^{- r+\frac{r}{p}} ds  \right|^p  dr \\
 & \lesssim   \left\| \varepsilon \right\|_{L^{2} }^{2p} + \left\| \varepsilon \right\|_{L^2 }^p,
\end{align*}
where we have used Young's inequality. Indeed
\begin{align*}
  \int_{1}^{\infty}   \left| \int_1^r  \re(\varepsilon) e^\frac{s}{2}  e^\frac{s}{2} e^{- r+\frac{r}{p}} ds  \right|^p  dr =   \int_{1}^{\infty} \left|  \int_1^\infty K(s,r) \re(\varepsilon(s))e^\frac{s}{2}  ds \right|^p dr   
\end{align*}
where $K(s,r):= \mathbb{1}_{ \{1\leq  s \leq r  \} } e^\frac{s}{2} e^{-\frac{p-1}{p}r }.$ Notice that 
\begin{align*}
 \sup_{r\geq 1} e^{-2 \frac{p-1}{p+2}r} \int_1^r e^{\frac{p}{p+2}s} ds  \leq     \sup_{r\geq 1}e^{-2 \frac{p-1}{p+2}r}  e^{\frac{p}{p+2}r} < \infty,\qquad \sup_{s\geq 1}e^{\frac{sp}{2+p}}\int_s^\infty e^{-2 \frac{p-1}{p+2}r} dr <\infty.
\end{align*}
Then $\left\| K(\cdot,r) \right\|_{L^{\frac{2p}{2+p}} (ds)}$ and $\left\| K(s,\cdot) \right\|_{L^{\frac{2p}{2+p}} (dr)}$ are uniformly bounded in $r$ and $s,$ respectively. Therefore by Young's inequality, we obtain the desired result. 
\end{proof}

\bibliographystyle{acm}
\bibliography{references.bib}

\end{document}